\numberwithin{equation}{section}
\newtheorem{theorem}{Theorem}[section]
\newtheorem{proposition}[theorem]{Proposition}
\newtheorem{corollary}[theorem]{Corollary}
\newtheorem{lemma}[theorem]{Lemma}
\theoremstyle{definition}
\newtheorem*{remark}{Remark}
\newtheorem{example}{Example}[section]
\def\N{{\bf N}}
\def\R{{\bf R}}
\newcommand{\alp}{\alpha}
\newcommand{\dsp}{\displaystyle}
\newcommand{\lam}{\lambda} 
\begin{document}

\title[Classification of the structures of stable radial solutions]
{Classification of the structures of stable radial \\[1ex] 
solutions for semilinear  elliptic equations in $\R^N$ } 
\author{Yasuhito Miyamoto}
\address[Yasuhito Miyamoto]{Graduate School of Mathematical Sciences, 
The University of Tokyo, 
3-8-1 Komaba, Meguro-ku, Tokyo 153-8914, Japan}
\email{miyamoto@ms.u-tokyo.ac.jp}

\author{Y\={u}ki Naito}
\address[Y\={u}ki Naito]{
Department of Mathematics,  Hiroshima University \\
Higashi-Hiroshima, 739-8526, Japan}
\email{yunaito@hiroshima-u.ac.jp}
\date{\today}

\begin{abstract}
We study the stability of radial solutions of the semilinear elliptic equation
$\Delta u +f(u)=0$ in $\R^N$, where $N \geq 3$ and  $f$ is a general superciritical nonlinearity. 
We give a classification of the solution structures 
with respect to the stability of radial solutions, 
and establish criteria for the existence and nonexistence of stable radial solutions 
in terms of the limits of $f'(u)F(u)$ as $u \to 0$ or $\infty$, where $F(u) = \int^{\infty}_u 1/f(t)dt$. 
Furthermore, we show the relation between the existence of singular stable solutions 
and the solution structure. 
\end{abstract}

\keywords{Semilinear elliptic equations, Stability,  
Radial solutions, Critical exponents} 

\subjclass[2020]{35J15, 35B35, 35B33, 35A24}

\maketitle

\section{Introduction}

We consider the stability of radial solutions of the semilinear elliptic equation
\begin{equation}
	\Delta u +f(u)=0  \quad  \mbox{in} \ \R^N,
	\label{eq1.1}
\end{equation}
where $N \geq 3$ and  $f \in C^1(\R)$.
A solution $u \in C^2(\R^N)$ of (\ref{eq1.1}) is called stable if 
$$
	\int_{\R^N}(|\nabla \phi|^2 -f'(u)\phi^2)dx \geq 0
$$
for any $\phi \in C^1_0(\R^N)$. 
If $u$ is not stable, we say that $u$ is unstable. 
It is well known that stable solutions appear naturally in the variational viewpoint, and 
the study of stable solutions turns out to be an important and interesting problem. 
For a survey, we refer to \cite{CaCab, Dup, Tak} and the references there in. 
Though there is vast amount of literature about stable solutions in a bounded domain,
much less is known about stable solutions for general nonlinearities in $\R^N$.

A complete classification of stable solutions in $\R^N$ is provided for two important 
nonlinearities $f(u) = |u|^{p-1}u$, $p > 1$ and $f(u) = e^u$ 
by Farina \cite{Fara, Farb, Farc}.
In the case $f(u) = |u|^{p-1}u$ with $p > 1$, it was shown by \cite{Farb} that 
no nontrivial stable $C^2$ solution of (\ref{eq1.1}) exists if either  $N \leq 10$, $p < \infty$  
or $N \geq 11$, $p < p_{JL}$, where 
\begin{equation}
	p_{JL}= 1 + \frac{4}{N-4-2\sqrt{N-1}}.
	\label{eq1.2}
\end{equation}
On the other hand, if $N \geq 11$ and $p \geq p_{JL}$, 
Eq.~(\ref{eq1.1}) has smooth positive, bounded, stable radial solutions.
In the case $f(u) = e^u$, 
there is no stable $C^2$ solutions of (\ref{eq1.1}) if $2 \leq N \leq 9$ (see \cite{Farc}), and 
there exist stable radial $C^2$ solutions of (\ref{eq1.1}) if $N \geq 10$ (see \cite{DuFac, JoLu}).
For a general convex nondecreasing function $f > 0$, 
classification results for stable solutions of (\ref{eq1.1}) were provided by
Dupaigne-Farina \cite{DuFaa, DuFab, DuFac}.
In \cite {DuFaa, DuFab} they introduced a quantity 
\begin{equation}
	q(u) = \frac{f'(u)^2}{f(u)f''(u)},
	\label{eq1.3}
\end{equation}
and obtained several Liouville-type theorems 
for stable solutions of (\ref{eq1.1}) in terms of limit of $q(u)$ as $u \to 0$ and $\infty$.

In this paper we are interested in radial solutions of (\ref{eq1.1}). 
It was shown by Cabr\'e-Cappela \cite{CaCaa} and Villegas \cite{Vila} that 
every bounded radial stable solution of (\ref{eq1.1}) must be constant if $N \leq 10$. 
This result holds for any nonlinearity $f \in C^1(\R)$. 
Recently, this result is extended to non-radial case for 
locally Lipschitz $f \geq 0$ by Dupaigne-Farina \cite{DuFac}. 
The aim of this paper is to classify the solution structures 
with respect to the stability of radial solutions when $N \geq 11$, 
and establish criteria for the existence and nonexistence of stable radial solutions.  

Throughout this paper, we always assume that $f(u)$ in (\ref{eq1.1}) satisfies the following:
$$
	\left\{
	\begin{array}{c}
	\mbox{$f \in C^2(\R \setminus\{0\})\cap C^1(\R)$, $f$ is odd, i.e., $f(-u) = -f(u)$, 
	and hence $f(0) = 0$; }
	\\[2ex]
	\displaystyle
	f(u) > 0, \quad f'(u) > 0, \quad f''(u) > 0 
	\quad \mbox{and} \quad F(u) < \infty \quad \mbox{for} \ u > 0, \quad \mbox{where} 
	\\[2ex]
	\displaystyle
	 F(u) = \int^{\infty}_u \frac{ds}{f(s)}.
	\end{array}
	\right.
$$
Note that $F(u)$ is strictly decreasing for $u > 0$. 
Since $f \in C^1(\R)$ and $f(0) = 0$, $F(u)$ satisfies
$F(u) \to \infty$ as $u \to 0$.

For $\alpha > 0$, we denote by $u(r, \alpha)$ a unique solution of the problem 
\begin{equation}
	\left\{
	\begin{array}{c}
	\dsp
	u''+\frac{N-1}{r}u'+f(u)=0 \quad \mbox{for} \  r > 0,
	\\[2ex]
	u(0) = \alpha, \quad u'(0) = 0.
	\end{array}
	\right.
	\label{eq1.4}
\end{equation}
We denote by $r_0(\alpha)$ the first zero of $u(r, \alpha)$ for $r \geq 0$.
Define $r_0(\alpha) = \infty$ if $u(r, \alpha) > 0$ for all $r \geq 0$.

First we give a classification of the structures of stable solutions for (\ref{eq1.4}).

\begin{theorem}\label{thm1.1}
If $u(r, \alpha)$ is stable, then $r_0(\alpha) = \infty$. 
For solutions to $(\ref{eq1.4})$, 
one of the following {\rm (I)-(III)} holds.
\begin{enumerate}
\item[{\rm (I)}] 
For any $\alpha > 0$, $u(r, \alpha)$ is unstable.

\item[{\rm (II)}] 
For any $\alpha > 0$, $u(r, \alpha)$ is stable. 
In this case, $u(r, \alpha)$ is strictly increasing in $\alpha > 0$ for each fixed $r > 0$.

\item[{\rm (III)}] 
There exists $\alpha^* \in (0, \infty)$ such that 
$u(r, \alpha)$ is stable if $\alpha \in (0, \alpha^*]$, 
and $u(r, \alpha)$ is unstable if $\alpha > \alpha^*$.
In this case, $u(r, \alpha)$ is strictly increasing in $\alpha \in (0, \alpha^*)$ for each fixed $r > 0$.
\end{enumerate}
\end{theorem}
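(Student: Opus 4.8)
The plan is to convert stability of $u(r,\alp)$ into a statement about the linearized solution and then run comparison arguments powered by the convexity of $f$. I would first record the reduction that, for a radial solution, full stability is equivalent to radial stability: decomposing a test function into spherical harmonics $\phi=\sum_k\phi_k(r)Y_k(\theta)$ splits the quadratic form as $Q(\phi)=\sum_k\int_0^\infty(|\phi_k'|^2+\lambda_k r^{-2}\phi_k^2-f'(u)\phi_k^2)r^{N-1}dr$ with $0=\lambda_0\le\lambda_1\le\cdots$, so the radial mode $k=0$ is the most unstable one and $u$ is stable iff $\int_0^\infty(|\phi'|^2-f'(u)\phi^2)r^{N-1}dr\ge0$ for all radial $\phi$. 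For the first assertion I would argue by contradiction: if $r_0(\alp)<\infty$, take $\phi=u$ on $B_{r_0(\alp)}$ extended by $0$ (admissible after the usual $C^1_0$ density). Using $\Delta u=-f(u)$ and $u=0$ on $\partial B_{r_0}$ gives $Q(\phi)=\int_{B_{r_0}}u(f(u)-f'(u)u)\,dx$, and strict convexity with $f(0)=0$ yields $f(u)<f'(u)u$ for $u>0$, so $Q(\phi)<0$, contradicting stability. Thus stability forces $r_0(\alp)=\infty$ and $u(\cdot,\alp)>0$.

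Next comes the key linearization lemma: $u(\cdot,\alp)$ is stable iff $u_\alp(r,\alp)>0$ for all $r>0$, where $u_\alp=\partial u/\partial\alp$ solves $L\psi:=\psi''+\frac{N-1}{r}\psi'+f'(u)\psi=0$ with $u_\alp(0)=1$, $u_\alp'(0)=0$. For the ``if'' direction I would use Picone's identity: with $\Phi=u_\alp>0$ one gets $\int_0^\infty(|\phi'|^2-f'(u)\phi^2)r^{N-1}dr=\int_0^\infty r^{N-1}(\phi'-\phi\Phi'/\Phi)^2\,dr\ge0$. For ``only if'', if $u_\alp$ had a first zero at $r_1<\infty$ then $u_\alp$ is a positive first eigenfunction on $(0,r_1)$ (Neumann at $0$, Dirichlet at $r_1$) with eigenvalue $0$; strict domain monotonicity makes the principal eigenvalue on $(0,R)$ negative for $R>r_1$, and its eigenfunction extended by $0$ yields $Q<0$, i.e.\ instability.

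The heart of the proof is the comparison step: if $\alp_0$ is stable and $0<\alp<\alp_0$, then $0<u(r,\alp)<u(r,\alp_0)=:U(r)$ for all $r$. Writing $V=u(\cdot,\alp)$ and $w=U-V$, I would let $r_2$ be the first $r$ at which either $w=0$ or $V=0$ and show neither can occur. While $0<V<U$, convexity gives $f(U)-f(V)\le f'(U)(U-V)$, hence $L_0w\ge0$ for $L_0=\Delta+f'(U)$; the Wronskian $W=r^{N-1}(\Phi w'-\Phi'w)$, with $\Phi=u_\alp(\cdot,\alp_0)>0$ supplied by the lemma, satisfies $W(0)=0$ and $W'=r^{N-1}\Phi L_0w\ge0$, so $W\ge0$, while $w(r_2)=0$ forces $W(r_2)=r_2^{N-1}\Phi(r_2)w'(r_2)\le0$; thus $w'(r_2)=0$ and uniqueness gives $w\equiv0$, a contradiction. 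To rule out $V(r_2)=0$ I would use Sturm comparison: on $0<V<U$ the potential $f(V)/V\le f'(V)\le f'(U)$, so $V$ oscillates no faster than the zero-free $\Phi$ and cannot vanish. Therefore $0<V<U$ for all $r$; since $f'$ is increasing, $f'(V)\le f'(U)$ and the quadratic-form comparison shows $u(\cdot,\alp)$ is stable.

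Finally I would assemble the trichotomy. The comparison step shows the stable set $S$ is downward closed and that $u(r,\cdot)$ is strictly increasing wherever it is stable; closedness of $S$ follows because for each fixed $\phi$ the inequality $\int(|\phi'|^2-f'(u)\phi^2)r^{N-1}dr\ge0$ passes to limits in $\alp$. Hence $S$ is $\emptyset$, $(0,\infty)$, or $(0,\alp^*]$ with $\alp^*=\sup S$, giving cases (I)--(III), and the monotonicity statements follow from the strict ordering. I expect the main obstacle to be precisely this comparison step: a priori the smaller solution could change sign, and the inequality $f(U)-f(V)\le f'(U)(U-V)$ fails once $V<-U$; the resolution is to prove non-crossing and non-vanishing \emph{simultaneously} at the common first failure time $r_2$ (via the Wronskian and the Sturm argument), which confines the entire analysis to the regime $0<V<U$ where convexity is available.
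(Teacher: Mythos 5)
Your proposal is correct, but it reaches the trichotomy by a genuinely different mechanism than the paper. The paper never introduces the linearized solution $u_\alpha=\partial u/\partial\alpha$ or the characterization ``stable $\Leftrightarrow u_\alpha>0$''; its engine is Lemma \ref{lem2.2}, which works directly with the difference $w=u(\cdot,\beta)-u(\cdot,\alpha)$ of two genuine solutions: by convexity $w$ is a strict supersolution of the linearization at the smaller solution (giving stability of the smaller one via the $\phi^2/w$ test-function trick) and a strict subsolution at the larger one (giving instability via $w$ itself on the nodal ball), with the ordering/crossing dichotomy controlled by the ODE intersection Lemma \ref{lem2.1}. Your route instead establishes the positivity criterion for $u_\alpha$ (Picone in one direction, principal-eigenvalue domain monotonicity in the other) and then runs Wronskian and Sturm comparisons against $\Phi=u_\alpha$; this buys you a sharper intermediate statement (stability is equivalent to strict monotonicity of the flow in $\alpha$) but costs you the spherical-harmonics reduction to radial perturbations and the eigenvalue machinery on balls, none of which the paper needs. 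Your proof of the first assertion (testing with $\phi=u$ extended by zero on $B_{r_0(\alpha)}$ and using $f(u)<uf'(u)$) is actually shorter and more direct than the paper's, which derives it from Lemmas \ref{lem2.1} and \ref{lem2.2}(ii). Two spots in your write-up are stated loosely but are recoverable: the conclusion $w'(r_2)=0\Rightarrow w\equiv 0$ needs the observation that $w$ solves a linear second-order ODE with continuous coefficient $(f(U)-f(V))/(U-V)$, and the Sturm step ruling out $V(r_2)=0$ should be run as the same Wronskian argument anchored at $r=0$ (yielding $V'(r_2)=0$ and then $V\equiv 0$ by uniqueness for the nonlinear ODE), rather than by appeal to a generic ``oscillates no faster'' principle. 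The final assembly (downward-closed stable set, closed under increasing limits, hence $\emptyset$, $(0,\alpha^*]$, or $(0,\infty)$) coincides with the paper's.
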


\begin{remark}
In the case where $u(r, \alpha)$ is unstable, that is, 
in the cases (I) with $\alpha > 0$ or (III) with $\alpha > \alpha^*$, 
we do not know whether $r_0(\alpha) < \infty$ or $r_0(\alpha) = \infty$.
\end{remark}

As mentioned above, it was shown by \cite{CaCaa} and \cite{Vila} that (I) must hold if $N \leq 10$. 
To state our results on the existence and nonexistence of stable radial solutions when $N \geq 11$,  
we introduce the conditions on $f(u)$ as follows:
\begin{enumerate}
\item[{\rm (f1)}] 
There exists a finite limit 
\begin{equation}
	q_0 = \lim_{u \to 0}f'(u)F(u);
	\label{eq1.5}
\end{equation}

\item[{\rm (f2)}] 
There exists a finite limit 
\begin{equation}
	q_{\infty} = \lim_{u \to \infty}f'(u)F(u).
	\label{eq1.6}
\end{equation}
\end{enumerate}

One can check that $q_0 \geq 1$ if the limit in (f1) exists (see \cite[Lemma 2.1]{FuIoc}).
Assume that there exists a limit $\lim_{u \to 0+}q(u)$, where $q(u)$ is defined by (\ref{eq1.3}). 
Then (f1) holds and the limit of $q(u)$ as $u \to 0$ is also given by $q_0$ in (f1), since we obtain 
$$
	\lim_{u \to 0}f'(u)F(u) = \lim_{u \to 0}\frac{F(u)}{\frac{1}{f'(u)}} 
	= \lim_{u \to 0}\frac{f'(u)^2}{f(u)f''(u)}
$$ 
by L'Hospital's rule. 
As mentioned in Dupaine-Farina \cite{DuFab}, 
if there exists a limit $p_0 = \lim_{u \to 0}uf'(u)/f(u)$, 
then we have $1/p_0 + 1/q_0 = 1$. 
In fact, by L'Hospital's rule, we obtain 
$$
	\frac{1}{p_0} = \lim_{u \to 0}\frac{f(u)/f'(u)}{u} = 
	\lim_{u \to 0}\left(1-\frac{f(u)f''(u)}{f'(u)^2}\right) = 1 - \frac{1}{q_0}.
$$
Note that the constant $q_{\infty}$ in (f2) satisfies $q_{\infty} \geq 1$ (see  \cite[Remark 1.1]{FuIoa}). 
Similarly, if there exists a limit $\lim_{u \to \infty}q(u)$, then (f2) holds and 
the limit of $q(u)$ as $u \to \infty$ is also given by $q_{\infty}$ in (f2).  
Furthermore, we have $1/p_{\infty} + 1/q_{\infty} = 1$ if there exists a limit 
$p_{\infty} = \lim_{u \to \infty}uf'(u)/f(u)$.

For $N \geq 11$, define $q_{JL}$ by the H\"older conjugate of $p_{JL}$, 
defined by (\ref{eq1.2}), i.e., 
$$
	q_{JL} = \frac{N-2\sqrt{N-1}}{4}.
$$
For the existence of stable solutions, we obtain the following result.

\begin{theorem}\label{thm1.2}
Let $N \geq 11$. 
Assume that there exist constants $q_1 \leq q_2$ and $0 < \ell \leq \infty$ satisfying 
$1 \leq q_1 \leq q_{JL}$,
\begin{equation}
	q_1 \leq f'(u)F(u) \leq q_2 \quad \mbox{for} \ 0 < u < \ell
	\label{eq1.7}
\end{equation}
and 
\begin{equation}
	q_2(2N-4q_1) \leq \frac{(N-2)^2}{4}.
	\label{eq1.8}
\end{equation}
Then $u(r, \alpha)$ is stable for $0 < \alpha < \ell$. 
Furthermore, if $(\ref{eq1.7})$ holds with $\ell = \infty$, 
then {\rm (II)} holds in Theorem $\ref{thm1.1}$. 
\end{theorem}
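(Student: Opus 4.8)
The engine of the proof is Hardy's inequality $\int_{\R^N}|\nabla\phi|^2\,dx \ge \frac{(N-2)^2}{4}\int_{\R^N}\frac{\phi^2}{|x|^2}\,dx$, valid for all $\phi\in C^1_0(\R^N)$ since $N\ge 3$. Consequently, $u(r,\alpha)$ is stable as soon as the pointwise bound
\[
f'(u(r,\alpha)) \le \frac{(N-2)^2}{4\,r^2}, \qquad r>0,
\]
holds, because then $\int_{\R^N}(|\nabla\phi|^2 - f'(u)\phi^2)\,dx \ge \int_{\R^N}\left(\frac{(N-2)^2}{4r^2} - f'(u)\right)\phi^2\,dx \ge 0$, and $f'(u(|x|,\alpha))$ is radial so no symmetry restriction on $\phi$ is needed. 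Since $f(\alpha)>0$ forces $u'(r,\alpha)<0$ as long as $u>0$, the solution is strictly decreasing and $0<u(r,\alpha)<\alpha<\ell$ on its interval of positivity; hence $(\ref{eq1.7})$, i.e. $q_1\le f'(u)F(u)\le q_2$, holds all along the solution. The whole problem is thus reduced to the displayed pointwise bound.

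To see how the constants combine, write $f'(u) = \frac{f'(u)F(u)}{F(u)} \le \frac{q_2}{F(u)}$ using the upper bound in $(\ref{eq1.7})$. The pointwise bound therefore follows from the lower estimate
\[
F(u(r,\alpha)) \ge A_1\,r^2, \qquad A_1 := \frac{1}{2N-4q_1},
\]
together with $(\ref{eq1.8})$: indeed $(\ref{eq1.8})$ reads exactly $\frac{4q_2}{(N-2)^2}\le A_1$, so $F(u)\ge A_1 r^2$ gives $f'(u)\le \frac{q_2}{A_1 r^2}\le \frac{(N-2)^2}{4r^2}$. Thus the lower bound $q_1$ enters through $A_1$ (via the estimate on $F(u)$) while the upper bound $q_2$ enters through the numerator, producing precisely the asymmetric combination $q_2(2N-4q_1)$ in $(\ref{eq1.8})$.

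The crux is therefore the inequality $F(u)\ge A_1 r^2$, which I would prove through the second-order equation satisfied by $G:=F(u)$. Using $F'(u)=-1/f(u)$ and $(\ref{eq1.4})$ one obtains
\[
G'' + \frac{N-1}{r}G' - f'(u)\,(G')^2 = 1, \qquad G(0)=F(\alpha)>0,\quad G'(0)=0,
\]
with $G'>0$ for $r>0$. Since $f'(u)(G')^2 = \frac{f'(u)F(u)}{G}(G')^2 \ge \frac{q_1}{G}(G')^2$, this yields the differential inequality $G'' + \frac{N-1}{r}G' - \frac{q_1}{G}(G')^2 \ge 1$, for which $\bar G(r)=A_1 r^2$ is an exact solution (here the identity $4A_1 q_1 = 2A_1 N-1$, equivalent to the definition of $A_1$, is used). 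A comparison argument starting from $G(0)=F(\alpha)>0=\bar G(0)$ should then give $G\ge \bar G$, i.e. $F(u)\ge A_1 r^2$. This comparison is the main obstacle: the operator is nonlinear in $G'$, so the linear part alone yields only the insufficient bound $G\ge \frac{r^2}{2N}$, and the improvement to $A_1 r^2$ genuinely requires exploiting the nonlinear term $\frac{q_1}{G}(G')^2$. I expect to need either the Emden--Fowler substitution $G(r)=r^2H(\ln r)$, which renders the equation autonomous and reduces the claim to confining the trajectory $H$ above the equilibrium $A_1$, or a tailored first-crossing/barrier argument using the favourable sign of that term.

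Finally, the bound $f'(u)\le \frac{(N-2)^2}{4r^2}$ just obtained holds on the maximal positivity interval $(0,r_0(\alpha))$. If $r_0(\alpha)<\infty$, then Hardy's inequality on $B_{r_0(\alpha)}$ shows that the principal Dirichlet eigenvalue of $-\Delta-f'(u)$ on $B_{r_0(\alpha)}$ is nonnegative; on the other hand, testing its quadratic form with $\phi=u\in H^1_0(B_{r_0(\alpha)})$ and using $\int|\nabla u|^2 = \int uf(u)$ together with the strict convexity of $f$ (so that $f(u)-f'(u)u<0$ for $u>0$) makes it negative, a contradiction. Hence $r_0(\alpha)=\infty$, $u(\cdot,\alpha)$ is a genuine solution on $\R^N$, and the pointwise bound with Hardy gives stability for every $0<\alpha<\ell$. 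If $(\ref{eq1.7})$ holds with $\ell=\infty$, then every $u(r,\alpha)$ is stable, so by the trichotomy of Theorem $\ref{thm1.1}$ alternative {\rm (II)} must hold, and the asserted strict monotonicity of $u(r,\alpha)$ in $\alpha$ is exactly the conclusion of {\rm (II)} recorded there.
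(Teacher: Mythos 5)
Your reduction of stability to the pointwise bound $f'(u(r,\alpha))\le (N-2)^2/(4r^2)$ via Hardy's inequality is sound, and the arithmetic showing that this bound follows from $F(u(r,\alpha))\ge r^2/(2N-4q_1)$ together with $(\ref{eq1.7})$ and $(\ref{eq1.8})$ is exactly the combination of constants that makes the theorem tick. But there is a genuine gap precisely where you flag ``the main obstacle'': the estimate $F(u(r,\alpha))\ge r^2/(2N-4q_1)$ is asserted, not proved, and the comparison principle you propose for it cannot work as stated. Your differential inequality $G''+\frac{N-1}{r}G'-\frac{q_1}{G}(G')^2\ge 1$ with $G(0)>0$, together with the exact solution $\bar G=A_1r^2$, nowhere uses the hypothesis $q_1\le q_{JL}$; yet for $f(u)=u^p$ with $p_S<p<p_{JL}$ (so $q_{JL}<q_1=q_2<q_S$) one has equality in that differential inequality and $G(0)>0$, while the regular Lane--Emden solution is known to oscillate around the singular solution $W(r)=Lr^{-2/(p-1)}$, so $G-\bar G=F(w)-F(W)$ changes sign infinitely often. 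Hence any correct comparison argument must use $q_1\le q_{JL}$ in an essential and non-obvious way --- this is exactly where the Joseph--Lundgren exponent is born --- and you have not identified how. This is the heart of the theorem, not a technicality to be settled by an Emden--Fowler substitution alone.

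The paper circumvents this by not proving the bound for $F(u(r,\alpha))$ directly. It takes a solution $w$ of the model problem with $g(u)=u^{p}$, $p=q_1/(q_1-1)$ (or $g(u)=e^u$ if $q_1=1$), imports the bound $G(w(r,\alpha))>r^2/(2N-4q_1)$ from the separation theorems of Wang and Tello (Lemma \ref{lem4.3}, where $p\ge p_{JL}$ is used), transplants it to a barrier $\hat v=F^{-1}(G(w))$ via the generalized scaling transformation of Section 3, checks that $\hat v$ satisfies $(\ref{eq4.1})$ and $r^2f'(\hat v)\le (N-2)^2/4$, and then runs a linear Sturm-type comparison with the critical Hardy potential (Lemma \ref{lem4.2}) to order $u(r,\alpha)$ below $\hat v$ and to order the family $u(\cdot,\alpha)$ monotonically; stability then comes from the separation criterion of Lemma \ref{lem2.2}(i) rather than from Hardy's inequality applied to $u$ itself. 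If you want to keep your more direct route, you would still need to establish $F(u(r,\alpha))\ge r^2/(2N-4q_1)$, and the only available mechanism is essentially this barrier construction. Your closing argument ruling out $r_0(\alpha)<\infty$ and the passage to alternative (II) are fine once the key estimate is in place.
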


\begin{remark}
Let $N \geq 11$. Note that 
\begin{equation}
	q(2N-4q)< (N-2)^2/4 \quad \mbox{for} \ q < q_{JL}
	\label{eq1.9}
\end{equation}
and $q(2N-4q) = (N-2)^2/4$ for $q = q_{JL}$. 
In the case $f(u) = u^p$, we have $F(u)f'(u) \equiv q = p/(p-1)$. 
Then, letting $q_1 = q_2 = q$, we see that 
(\ref{eq1.7}) holds with $\ell = \infty$, and that (\ref{eq1.8}) holds if $N \geq 11$ and $q \leq q_{JL}$.
Note here that $1 < q = p/(p-1) \leq q_{JL}$ if and only if $p \geq p_{JL}$ when $N \geq 11$. 
By Theorem \ref{thm1.2}, we conclude that 
$u(r, \alpha)$ is stable for any $\alpha > 0$ if $N \geq 11$ and $p \geq p_{JL}$.
This result is same as that in \cite{Farb}, and hence the
conditions (\ref{eq1.7}) and (\ref{eq1.8}) are optimal in the case $f(u) = u^p$. 
\end{remark}

Assume that $N \geq 11$ and (f1) holds with $1 < q_0 < q_{JL}$. 
From (\ref{eq1.5}) and (\ref{eq1.9}) 
there exist constants $q_2 > q_1 \geq 1$ with $q_1 \leq q_0 \leq q_2$ and $\ell > 0$ such that 
(\ref{eq1.7}) and (\ref{eq1.8}) hold. 
By Theorem \ref{thm1.2}, we obtain the following corollary.

\begin{corollary}\label{cor1.3}
Let $N \geq 11$. 
If {\rm (f1)} holds with $1 < q_0 < q_{JL}$, then $(\ref{eq1.1})$ has stable solutions, 
and hence {\rm (II)} or {\rm (III)} holds in Theorem $\ref{thm1.1}$.
\end{corollary}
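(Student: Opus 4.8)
The plan is to deduce the corollary directly from Theorem~\ref{thm1.2} by fattening the single limiting value $q_0$ into an admissible pair of constants $q_1 \le q_2$. The only real work is to verify that the hypotheses (\ref{eq1.7}) and (\ref{eq1.8}) of Theorem~\ref{thm1.2} can be met; once they are, the conclusion about the trichotomy in Theorem~\ref{thm1.1} is immediate.

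First I would record the gap afforded by the strict inequality (\ref{eq1.9}). Since $1 < q_0 < q_{JL}$, evaluating (\ref{eq1.9}) at $q = q_0$ gives
\[
	q_0(2N - 4q_0) < \frac{(N-2)^2}{4}.
\]
The map $(q_1, q_2) \mapsto q_2(2N - 4q_1)$ is continuous and equals $q_0(2N-4q_0)$ at $(q_0, q_0)$, so there is room to perturb. Concretely, I would fix $q_1$ slightly below $q_0$ and $q_2$ slightly above $q_0$; by continuity one can keep $q_2(2N-4q_1)$ below $(N-2)^2/4$, which is (\ref{eq1.8}). Because $q_0 > 1$ and $q_0 < q_{JL}$, shrinking the perturbation further guarantees $1 \le q_1 \le q_{JL}$ as well, so all the standing restrictions on $q_1, q_2$ in Theorem~\ref{thm1.2} hold.

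Next, with $q_1 < q_0 < q_2$ now fixed, I would invoke the definition of the limit in (f1): since $f'(u)F(u) \to q_0$ as $u \to 0$, there exists $\ell > 0$ such that $q_1 \le f'(u)F(u) \le q_2$ for all $0 < u < \ell$, which is exactly (\ref{eq1.7}). Theorem~\ref{thm1.2} then yields that $u(r, \alpha)$ is stable for every $0 < \alpha < \ell$, so (\ref{eq1.1}) admits stable radial solutions. Finally, the existence of even one stable solution rules out alternative {\rm (I)} of Theorem~\ref{thm1.1}, whence {\rm (II)} or {\rm (III)} must hold.

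The argument is essentially a continuity-and-limit bookkeeping exercise, so I do not expect a serious obstacle; the one point to watch is that both strict inequalities $q_0 > 1$ and $q_0 < q_{JL}$ are genuinely used, the former to secure $q_1 \ge 1$ and the latter, through the strictness in (\ref{eq1.9}), to create the margin needed for (\ref{eq1.8}). Were $q_0 = q_{JL}$ or $q_0 = 1$, this margin would collapse, which is exactly why the hypothesis is stated with strict inequalities.
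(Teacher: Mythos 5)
Your proposal is correct and follows essentially the same route as the paper: the authors likewise use the strict inequality (\ref{eq1.9}) at $q_0$ together with the limit in (f1) to produce constants $q_1 \leq q_0 \leq q_2$ with $q_1 \geq 1$ and an $\ell > 0$ for which (\ref{eq1.7}) and (\ref{eq1.8}) hold, and then apply Theorem \ref{thm1.2}. Your additional bookkeeping (checking $q_1 \leq q_{JL}$ and noting where the strictness of $1 < q_0 < q_{JL}$ is used) is accurate and consistent with the paper's argument.
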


We consider the condition that (II) holds in Theorem \ref{thm1.1}. 
Let $N \geq 11$. 
From (\ref{eq1.9}) we see that if constants $q_1$ and $q_2$ are sufficiently close to $q$ 
with $q < q_{JL}$, then $q_1$, $q_2$ satisfy (\ref{eq1.8}).

\begin{corollary}\label{cor1.4}
Let $N \geq 11$. 
Assume that constants $q_1$ and $q_2$ satisfy 
 $1 \leq q_1 \leq q_2$ and $(\ref{eq1.8})$. 
If $f(u)$ satisfies
\begin{equation}
	q_1 \leq \frac{f'(u)^2}{f(u)f''(u)} \leq q_2 \quad 
	\mbox{for all} \ u > 0,
	\label{eq1.10}
\end{equation}
then $(\ref{eq1.7})$ holds with $\ell = \infty$, and hence 
{\rm (II)} holds in Theorem $1.1$. 
\end{corollary}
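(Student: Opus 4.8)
The plan is to transfer the pointwise two-sided bound on $q(u)=f'(u)^2/(f(u)f''(u))$ given in (\ref{eq1.10}) to the \emph{same} two-sided bound on $f'(u)F(u)$, and then quote the last assertion of Theorem \ref{thm1.2} with $\ell=\infty$. The bridge between the two quantities is a single differential identity, and this is the key step. Writing $q(u)$ for the expression in (\ref{eq1.3}) and using $F'(u)=-1/f(u)$, I would compute
\[
\frac{d}{du}\left(\frac{1}{f'(u)}\right)
= -\frac{f''(u)}{f'(u)^2}
= -\frac{1}{q(u)\,f(u)}
= \frac{F'(u)}{q(u)} .
\]
Thus the derivative of $1/f'$ is exactly $F'$ reweighted by $1/q$, so the hypothesis $q_1\le q(u)\le q_2$ of (\ref{eq1.10}), together with $F'(u)<0$ (since $f>0$), yields
\[
\frac{F'(u)}{q_1}\le \frac{d}{du}\left(\frac{1}{f'(u)}\right)\le \frac{F'(u)}{q_2},
\qquad u>0 .
\]

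Next I would integrate this chain of inequalities over $(u,\infty)$. This needs the two boundary values $F(u)\to 0$ and $1/f'(u)\to 0$ as $u\to\infty$. The first is immediate because $F$ is the tail of a convergent integral. For the second, I would argue that $f''>0$ forces $f'$ to be strictly increasing, so $f'(u)\to L\in(0,\infty]$; if $L$ were finite then $f$ would grow at most linearly and $F(u)=\int_u^\infty ds/f(s)$ would diverge, contradicting $F(u)<\infty$. Hence $L=\infty$ and $1/f'(u)\to 0$. Carrying out the integration then gives
\[
-\frac{F(u)}{q_1}\le -\frac{1}{f'(u)}\le -\frac{F(u)}{q_2},
\]
and rearranging (multiply by $-1$, then by $f'(u)>0$) produces $q_1\le f'(u)F(u)\le q_2$ for all $u>0$, which is precisely (\ref{eq1.7}) with $\ell=\infty$.

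Finally, since the constants $q_1,q_2$ are assumed to satisfy (\ref{eq1.8}) and (with $1\le q_1$ as in the hypothesis, consistent with the requirement $1\le q_1\le q_{JL}$ implicit in Theorem \ref{thm1.2}) we have just shown that (\ref{eq1.7}) holds with $\ell=\infty$, the concluding statement of Theorem \ref{thm1.2} applies and gives that (II) of Theorem \ref{thm1.1} holds. I expect the only genuinely nontrivial point to be the boundary evaluation at $u=\infty$, namely justifying $1/f'(u)\to 0$ from $F<\infty$; once the identity for $\frac{d}{du}(1/f')$ is in hand, the remainder is a monotone rearrangement of inequalities.
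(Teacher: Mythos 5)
Your proof is correct and takes essentially the same route as the paper's (Lemma \ref{lem4.6}): both convert the two-sided bound \eqref{eq1.10} into the differential inequality $q_1 f''/f'^2 \leq 1/f \leq q_2 f''/f'^2$ (equivalently, your identity relating $(1/f')'$ and $F'$), integrate over $[u,\infty)$ using $f'(u)\to\infty$, and conclude $q_1/f'(u)\leq F(u)\leq q_2/f'(u)$ before invoking Theorem \ref{thm1.2}. The only difference is cosmetic: you spell out why $F<\infty$ and $f''>0$ force $f'(u)\to\infty$, a step the paper asserts without proof.
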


We will prove Corollary \ref{cor1.4} in Section 4 (see Lemma \ref{lem4.6}). 
As an example of $f$ satisfying (\ref{eq1.10}), we give 
$$
	f(u) = u^{p_1} + u^{p_2}, \quad \mbox{where} \ 
	p_1 = \frac{q_1}{q_1-1} \quad \mbox{and} \quad p_2 = \frac{q_2}{q_2-1}.
$$
For the details, see Example \ref{exm7.1}.

For the nonexistence of radial stable solutions, we obtain the following. 

\begin{theorem}\label{thm1.5}
Let $N \geq 11$. 
If {\rm (f1)} holds with $q_0 > q_{JL}$, 
then $u(r, \alpha)$ is unstable for any $\alpha > 0$, that is, 
{\rm (I)} holds in Theorem $\ref{thm1.1}$.
\end{theorem}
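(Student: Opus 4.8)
The plan is to derive case (I) from Theorem~\ref{thm1.1} by showing that $u(r,\alpha)$ is unstable for all sufficiently small $\alpha>0$. Since both (II) and (III) assert stability of small-amplitude solutions, producing a threshold $\bar\alpha>0$ below which every solution is unstable excludes (II) and (III) and forces (I). Moreover, Theorem~\ref{thm1.1} tells us that a stable solution has $r_0(\alpha)=\infty$; hence if $r_0(\alpha)<\infty$ for arbitrarily small $\alpha$ those solutions are already unstable, and I may assume from now on that $r_0(\alpha)=\infty$.

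Fix such a small $\alpha$ with $r_0(\alpha)=\infty$. Then $u=u(\cdot,\alpha)$ is a positive entire radial solution, decreasing in $r$, with $u(r)\to0$ as $r\to\infty$ (a positive superharmonic radial solution cannot tend to a positive constant). The instability test I will use is the sharp Hardy/oscillation threshold. Putting $\psi=r^{-(N-1)/2}\chi$ in the radial linearized equation $\psi''+\frac{N-1}{r}\psi'+f'(u)\psi=0$ turns it into
\[
\chi''+\left(f'(u)-\frac{(N-1)(N-3)}{4r^2}\right)\chi=0,
\]
which by the Hille--Kneser criterion is oscillatory at infinity whenever $\liminf_{r\to\infty}r^2f'(u(r))>(N-2)^2/4$. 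An oscillatory solution yields a compactly supported radial test function of negative energy, so under this condition $u$ is unstable; equivalently, one may use truncated Hardy extremals $r^{-(N-2)/2}$ supported near infinity, exploiting the sharpness of the constant $(N-2)^2/4$. Everything therefore reduces to evaluating this $\liminf$.

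To evaluate it, set $G(r)=F(u(r))$. Because $u'<0$ and $F'=-1/f$, $G$ is positive and increasing with $G\to\infty$, and differentiating together with \eqref{eq1.4} gives the exact identity
\[
G''=1-\frac{N-1}{r}G'+f'(u)\,(G')^2 .
\]
By (f1), $f'(u)=(q_0+o(1))/G$ as $r\to\infty$, so $G$ satisfies, to leading order, $G''=1-\frac{N-1}{r}G'+q_0(G')^2/G$. Inserting the slow-decay ansatz $G\sim A r^2$ forces $A(2N-4q_0)=1$, whence
\[
\lim_{r\to\infty}r^2 f'(u(r))=\frac{q_0}{A}=q_0(2N-4q_0).
\]
As recorded in \eqref{eq1.9}, the map $q\mapsto q(2N-4q)$ equals $(N-2)^2/4$ at $q=q_{JL}$; for the entire solutions at hand the slow decay forces the limiting exponent near $u=0$ to be supercritical, i.e.\ $q_0$ lies in the interval just above $q_{JL}$ and below the Sobolev-conjugate value $(N+2)/4$, on which $q_0(2N-4q_0)>(N-2)^2/4$. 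The oscillation criterion then gives instability, completing the reduction to (I).

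The main obstacle is the rigorous justification of the slow-decay asymptotics: one must show that the perturbed equation $G''=1-\frac{N-1}{r}G'+(q_0+o(1))(G')^2/G$ selects the quadratic growth $G\sim r^2/(2N-4q_0)$ for the trajectory coming from $u(\cdot,\alpha)$, rather than the fast-decay branch $u\sim c\,r^{-(N-2)}$ (that is, $G\sim r^{(N-2)/(q_0-1)}$), for which $r^2f'(u)\to0$ and the test fails. This slow-versus-fast selection is exactly the supercriticality of $f$ near $u=0$: when $f$ is subcritical near $0$ the solution develops a finite zero and instability is immediate from Theorem~\ref{thm1.1}, so the only entire solutions that occur are slowly decaying; the genuinely critical borderline $q_0=(N+2)/4$ must be handled by a separate global argument. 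Controlling this selection uniformly for all small $\alpha$ is the crux of the proof.
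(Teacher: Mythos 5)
Your overall reduction is sound and matches the paper's: it suffices to produce instability for all sufficiently small $\alpha$, since Theorem~\ref{thm1.1} then forces alternative (I). But the instability mechanism you propose has two genuine gaps. First, and most importantly, the step you yourself flag as ``the crux'' --- that the entire solution $u(\cdot,\alpha)$ selects the slow-decay branch $F(u(r))\sim r^2/(2N-4q_0)$ rather than the fast-decay branch --- is not an obstacle you can defer: it is the entire content of the argument, and nothing in the hypothesis (f1) with $q_0>q_{JL}$ rules out fast decay a priori for a fixed small $\alpha$. Your fallback (``subcritical near $0$ forces a finite zero'') is itself an unproved nontrivial claim, and in any case does not cover the range $q_{JL}<q_0<q_S$, where $f$ is supercritical near $0$ and both decay regimes are conceivable without further argument. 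Second, even granting the asymptotics, your test $\lim r^2f'(u(r))=q_0(2N-4q_0)>(N-2)^2/4$ holds only for $q_{JL}<q_0<\frac{N+2\sqrt{N-1}}{4}$ (the two roots of $q(2N-4q)=(N-2)^2/4$ are $q_{JL}$ and $\frac{N+2\sqrt{N-1}}{4}$, not $q_S=\frac{N+2}{4}$); the theorem imposes no upper bound on $q_0$, so a whole range of admissible $q_0$ (where $2N-4q_0$ may even be negative, making your ansatz inconsistent) is left untreated. So the proposal, as it stands, proves at best a weaker statement under additional unstated hypotheses.

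For comparison, the paper avoids both difficulties by never analyzing the decay of an individual solution. It applies the generalized scaling (\ref{eq5.3}) to convert $u(\cdot,\beta(\alpha))$ into a function $v(\cdot,\alpha)$ solving the perturbed Lane--Emden equation (\ref{eq5.5}) with exponent $p=q_0/(q_0-1)\in(1,p_{JL})$, proves (Proposition~\ref{prp5.1}) that $v(\cdot,\alpha)\to z(\cdot,\sigma)$ in $C[0,S]$ as $\alpha\to 0$ on any compact interval before the first zero of $z$, and then uses the intersection property of Lane--Emden profiles for $p<p_{JL}$ (Lemma~\ref{lem5.5}) to conclude that two solutions $u(\cdot,\alpha)$ and $u(\cdot,\beta(\alpha))$ intersect; Lemma~\ref{lem2.2}~(ii) then gives instability directly, uniformly in the whole range $q_0>q_{JL}$ and without any dichotomy between finite zeros and entire solutions. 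If you want to salvage your route, you would need to supply the slow-decay selection lemma and a separate argument for large $q_0$; the paper's compactness-plus-intersection argument is precisely the machinery that replaces both.
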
 

\begin{remark}
Assume that $N \geq 11$, and that there exists a limit 
\begin{equation}
	q_0 = \lim_{u \to 0}q(u),
	\label{eq1.11}
\end{equation}
where $q(u)$ is defined by (\ref{eq1.3}). 
It was shown by Dupaigne--Farina \cite{DuFab} that, 
if $q_0$ in (\ref{eq1.11}) satisfies  $q_0 > q_{JL}$, then 
any bounded nonnegative stable solution $u$ of $(\ref{eq1.1})$ must be $u \equiv 0$. 
As mentioned above, if the limit in (\ref{eq1.11}) exists, then 
(f1) holds with the same $q_0$ in (\ref{eq1.11}).
Thus the nonexistence of stable radial solutions is obtained 
under weaker assumption in Theorem \ref{thm1.5}. 
However, we do not know whether nonradial solutions of (\ref{eq1.1})  
are unstable under the condition of Theorem \ref{thm1.5}. 
\end{remark}

Next, we consider conditions for $u(r, \alpha)$ to be unstable 
for sufficiently large $\alpha > 0$. 

\begin{theorem}\label{thm1.6}
Let $N \geq 11$. 
Assume that {\rm (f2)} holds with $q_{\infty} > q_{JL}$. 
Then there exists $\hat{\alpha} > 0$ such that 
$u(r, \alpha)$ is unstable for $\alpha> \hat{\alpha}$,
and hence {\rm (I)} or {\rm (III)} holds in Theorem $\ref{thm1.1}$.
\end{theorem}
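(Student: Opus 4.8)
The plan is to probe the large-$\alpha$ behavior of $u(r,\alpha)$ through a rescaling that degenerates (\ref{eq1.4}) into a Lane--Emden equation in the limit $\alpha\to\infty$. Set $p_\infty = q_\infty/(q_\infty-1)$, the H\"older conjugate of $q_\infty$; as recorded in the Remark after Theorem~\ref{thm1.2}, the hypothesis $q_\infty > q_{JL}$ is equivalent to $p_\infty < p_{JL}$. Introduce
\[
 v_\alpha(s) = \frac{1}{\alpha}\,u\bigl(\sqrt{F(\alpha)}\,s,\ \alpha\bigr),
\]
so that $v_\alpha(0)=1$, $v_\alpha'(0)=0$ and a direct computation shows $v_\alpha$ solves $v''+\frac{N-1}{s}v' + \frac{F(\alpha)}{\alpha}f(\alpha v)=0$. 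The first step is to use (f2) to show that $f$ and $F$ are regularly varying (of indices $p_\infty$ and $-(p_\infty-1)$ respectively), from which $\frac{F(\alpha)}{\alpha}f(\alpha v)\to \frac{1}{p_\infty-1}v^{p_\infty}$ locally uniformly for $v>0$. By a priori bounds and continuous dependence for the singular ODE, $v_\alpha\to w$ in $C^1_{\mathrm{loc}}$, where $w$ solves $w''+\frac{N-1}{s}w'+\frac{1}{p_\infty-1}w^{p_\infty}=0$, $w(0)=1$, $w'(0)=0$; up to a spatial scaling this is the radial Lane--Emden profile of exponent $p_\infty<p_{JL}$.

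Next I would split according to the Sobolev exponent $p_S=(N+2)/(N-2)$, whose H\"older conjugate is $q_S=(N+2)/4>q_{JL}$, since $w$ is a positive entire solution exactly when $p_\infty\ge p_S$, i.e.\ when $q_{JL}<q_\infty\le q_S$. In that range $w>0$ on $[0,\infty)$ and, being a rescaled nontrivial Lane--Emden solution with $p_\infty<p_{JL}$ and $N\ge 11$, it is unstable by \cite{Farb}; hence there is a fixed $\phi_0\in C^1_0(\R^N)$ with $\int_{\R^N}\bigl(|\nabla\phi_0|^2 - \frac{p_\infty}{p_\infty-1}w^{p_\infty-1}\phi_0^2\bigr)\,dx<0$. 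Transplanting $\phi(x)=\phi_0(x/\sqrt{F(\alpha)})$ and changing variables, the stability quadratic form of $u(\cdot,\alpha)$ equals $F(\alpha)^{(N-2)/2}$ times $\int_{\R^N}\bigl(|\nabla\phi_0|^2 - F(\alpha)f'(\alpha v_\alpha)\phi_0^2\bigr)\,dx$. On $\mathrm{supp}\,\phi_0$ one has $\alpha v_\alpha\to\infty$ uniformly, so regular variation gives $F(\alpha)f'(\alpha v_\alpha)\to q_\infty w^{p_\infty-1}=\frac{p_\infty}{p_\infty-1}w^{p_\infty-1}$ uniformly, and the bracket converges to the negative number above. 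Thus the quadratic form is negative for all large $\alpha$, and $u(\cdot,\alpha)$ is unstable.

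If instead $q_\infty>q_S$, then $p_\infty<p_S$ and the subcritical profile $w$ possesses a finite first zero $R$ which it crosses transversally. By the $C^1_{\mathrm{loc}}$ convergence $v_\alpha\to w$ past $R$, for all large $\alpha$ the function $v_\alpha$, hence $u(\cdot,\alpha)$, changes sign, so $r_0(\alpha)<\infty$ and $u(\cdot,\alpha)$ is unstable by Theorem~\ref{thm1.1}. In both cases there is $\hat\alpha>0$ with $u(\cdot,\alpha)$ unstable for $\alpha>\hat\alpha$; consequently case (II) of Theorem~\ref{thm1.1} is excluded and (I) or (III) holds. I expect the main obstacle to be the rescaling limit under the weak hypothesis (f2): the mere existence of $\lim_{u\to\infty}f'(u)F(u)$ must be upgraded to genuine regular variation of $f$ and $F$, which is what simultaneously yields the convergence of the rescaled nonlinearity, the convergence $F(\alpha)f'(\alpha v_\alpha)\to\frac{p_\infty}{p_\infty-1}w^{p_\infty-1}$ on the support of the test function, and (in the subcritical case) the transversality of the zero; the accompanying uniform bounds giving compactness of $\{v_\alpha\}$ are the remaining technical points.
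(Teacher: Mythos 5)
Your proposal is correct in outline, but it follows a genuinely different route from the paper. The paper never passes through regular variation: it applies the generalized scaling $G(v(s,\alpha))=\lambda^{-2}F(u(r,\beta))$, $\lambda=\sqrt{F(\alpha)/G(1)}$ of Section 3, so that the rescaled function satisfies an \emph{exact} Lane--Emden equation perturbed by the term $\frac{f'(u)F(u)-q_\infty}{g(v)G(v)}|v'|^2$, whose coefficient vanishes as $\alpha\to\infty$ directly by (f2); the price is that the gradient-squared perturbation forces a Riccati-type comparison (Lemma \ref{lem5.3}) to get the a priori bound $|v_s|\le C_1 s$. Its instability mechanism is also different and uniform over the whole range $q_\infty>q_{JL}$: two choices of $\sigma$ ($1$ and $1/2$) produce two solutions whose rescalings converge to $z(\cdot,1)$ and $z(\cdot,1/2)$, which intersect because $p<p_{JL}$ (Lemma \ref{lem5.5}); hence $u(\cdot,\alpha)$ and $u(\cdot,\beta(\alpha))$ intersect for large $\alpha$ and Lemma \ref{lem2.2}(ii) gives instability — no appeal to Farina's Liouville theorem and no case split at $p_S$. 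Your route trades this for Karamata-type arguments plus Farina's classification; the trade is fair. The step you single out as the main obstacle — upgrading (f2) to regular variation — is in fact easy: $(f(u)F(u))'=f'(u)F(u)-1\to q_\infty-1$ gives $f(u)F(u)\sim(q_\infty-1)u$, whence $F(\lambda u)/F(u)=\exp\bigl(-\int_1^\lambda u\,ds/(f(us)F(us))\bigr)\to\lambda^{-(p_\infty-1)}$ locally uniformly, and both $\frac{F(\alpha)}{\alpha}f(\alpha v)\to\frac{1}{p_\infty-1}v^{p_\infty}$ and $F(\alpha)f'(\alpha v)\to q_\infty v^{p_\infty-1}$ follow. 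Your compactness step is actually \emph{simpler} than the paper's, since $0\le\frac{F(\alpha)}{\alpha}f(\alpha v_\alpha)\le\frac{f(\alpha)F(\alpha)}{\alpha}\le C$ on the range $v_\alpha\le 1$ gives $|v_\alpha'(s)|\le Cs/N$ for free; and the uniform convergence of the rescaled nonlinearity extends to $v\in[0,1]$ by monotonicity, so the convergence $v_\alpha\to w$ passes the zero of $w$ in the subcritical case without further work. So both proofs are valid; the paper's is more self-contained (elementary intersection properties only), yours is structurally lighter on ODE estimates but leans on regular variation and on \cite{Farb}.
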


Combining Corollary \ref{cor1.3} and Theorem \ref{thm1.6}, we obtain 
a sufficient condition for (III) to hold in Theorem \ref{thm1.1}.

\begin{corollary}\label{cor1.7}
Let $N \geq 11$. 
Assume that {\rm (f1)} and {\rm (f2)} hold 
with $1 < q_0 < q_{JL} < q_{\infty}$. 
Then {\rm (III)} holds in Theorem $\ref{thm1.1}$.
\end{corollary}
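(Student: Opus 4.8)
The plan is to deduce this corollary purely by combining the two previously established results—Corollary \ref{cor1.3} and Theorem \ref{thm1.6}—with the exhaustive trichotomy furnished by Theorem \ref{thm1.1}. No new analytic estimate is needed; the entire content is a logical elimination among the three cases (I)--(III).

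First I would invoke Corollary \ref{cor1.3}. Since the hypothesis supplies (f1) with $1 < q_0 < q_{JL}$, that corollary applies verbatim and tells us that $(\ref{eq1.1})$ admits stable radial solutions; consequently case (II) or case (III) of Theorem \ref{thm1.1} must hold. In other words, case (I) is ruled out.

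Next I would invoke Theorem \ref{thm1.6}. The hypothesis also supplies (f2) with $q_{\infty} > q_{JL}$, so that theorem produces a threshold $\hat{\alpha} > 0$ beyond which every $u(r, \alpha)$ is unstable. This observation eliminates case (II), whose defining property is that $u(r, \alpha)$ is stable for \emph{all} $\alpha > 0$; the existence of unstable solutions for $\alpha > \hat{\alpha}$ is plainly incompatible with it. Thus case (II) is ruled out as well.

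Finally, since Theorem \ref{thm1.1} guarantees that exactly one of (I), (II), (III) holds, and we have excluded (I) and (II), the only surviving possibility is (III), which completes the argument. The ``main obstacle'' is essentially nonexistent at this stage, because both ingredients are already proved; the only point requiring a moment's care is to confirm that the hypotheses of the two prior results hold simultaneously—which they do, since $1 < q_0 < q_{JL}$ is exactly the input to Corollary \ref{cor1.3} while $q_{JL} < q_{\infty}$ is exactly the input to Theorem \ref{thm1.6}—and that the trichotomy is genuinely exclusive, so that discarding (I) and (II) forces (III).
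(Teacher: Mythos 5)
Your argument is correct and is precisely the paper's intended proof: the authors state that Corollary \ref{cor1.7} follows by combining Corollary \ref{cor1.3} (which rules out (I)) with Theorem \ref{thm1.6} (which rules out (II)), leaving (III) by the trichotomy of Theorem \ref{thm1.1}. Nothing is missing.
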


To the best of our knowledge, Corollary \ref{cor1.7} is the first result 
which gives the conditions for the existence of both stable and unstable solutions.
For example, let
$$
	f(u) = \frac{u^{p_1}}{(1+u)^{p_2}} \quad \mbox{with} \ 
	p_1 > p_{JL} \quad \mbox{and} \quad 1 \leq p_1-p_2 < p_{JL}.
$$
Then {\rm (f1)} and {\rm (f2)} hold with $1 < q_0 < q_{JL} < q_{\infty}$, and 
hence (III) holds in Theorem \ref{thm1.1}. (See Example \ref{exm7.2} for the details.)  

Finally, we consider the relation between the existence of the singular stable solution 
and the solution structure for (\ref{eq1.1}).
By a singular radial solution $u^*$ of (\ref{eq1.1}) we mean that $u^* \in C^2(0, \infty)$ satisfies
\begin{equation}
	(u^*)'' + \frac{N-1}{r}(u^*)' + f(u^*) = 0 \quad \mbox{for} \ r > 0
	\label{eq1.12}
\end{equation}
and $u^*(x) \to \infty$ as $x \to 0$.

\begin{theorem}\label{thm1.8}
Assume that {\rm (II)} holds in Theorem $1.1$, that is, $u(r, \alpha)$ is stable 
for all $\alpha > 0$. 
Then there exists a singular radial solution $u^*$ of $(\ref{eq1.1})$ satisfying 
\begin{equation}
	u^*(r) > 0 \quad \mbox{for all} \ r > 0, \quad f'(u^*(|\cdot|)) \in L^1_{\rm loc}(\R^N) \quad 
	\mbox{and}
	\label{eq1.13}
\end{equation}
\begin{equation}
	\int_{\R^N}(|\nabla \phi|^2 - f'(u^*(|x|))\phi^2) dx \geq 0 
	\quad \mbox{for any} \ \phi \in C^1_0(\R^N).
	\label{eq1.14}
\end{equation}
In particular, $u^*$ is obtained as the increasing limit of $u(\cdot, \alpha)$ as $\alpha \to \infty$.
\end{theorem}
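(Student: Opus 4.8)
The plan is to construct $u^*$ as the increasing limit of the regular solutions. By hypothesis {\rm (II)} together with Theorem~\ref{thm1.1}, for every $\alpha>0$ the solution $u(\cdot,\alpha)$ is stable, hence $r_0(\alpha)=\infty$, so $u(r,\alpha)>0$ for all $r\ge 0$; moreover $u(r,\alpha)$ is strictly increasing in $\alpha$, and from $(r^{N-1}u')'=-r^{N-1}f(u)$ it is strictly decreasing in $r$. I therefore set
\[
 u^*(r):=\lim_{\alpha\to\infty}u(r,\alpha)\in(0,\infty]\qquad(r>0),
\]
the limit existing by monotonicity, with $u^*(r)\ge u(r,\alpha_0)>0$ for any fixed $\alpha_0$. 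It then remains to prove: (a) $u^*(r)<\infty$ for $r>0$; (b) $u^*\in C^2(0,\infty)$ solves (\ref{eq1.12}); (c) $u^*(r)\to\infty$ as $r\to0$; and (d) the integrability and stability statements (\ref{eq1.13})--(\ref{eq1.14}).

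The main obstacle is (a): an $\alpha$-uniform upper bound for $u(r,\alpha)$ on compact subsets of $(0,\infty)$. The key device is the substitution $v:=F(u)$. Using $F'(u)=-1/f(u)$ and the equation in (\ref{eq1.4}), a direct computation gives
\[
 v''+\frac{N-1}{r}v'=1+f'(u)\,(v')^2\ge 1,\qquad v'(0)=0,
\]
since $f'>0$. Hence $(r^{N-1}v')'\ge r^{N-1}$, and integrating twice from $0$ yields $v'(r)\ge r/N$ and then
\[
 v(r,\alpha)=F\big(u(r,\alpha)\big)\ge \frac{r^2}{2N}\qquad(r>0),
\]
uniformly in $\alpha$. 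As $F$ is a strictly decreasing bijection of $(0,\infty)$ onto itself, this is exactly $u(r,\alpha)\le F^{-1}\!\big(r^2/(2N)\big)<\infty$, and letting $\alpha\to\infty$ gives $u^*(r)\le F^{-1}(r^2/(2N))<\infty$ for every $r>0$.

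For (b) I pair this with the companion bound $v(r,\alpha)=F(u(r,\alpha))\le F(u(r,\alpha_0))$ (monotonicity in $\alpha$ and $F$ decreasing), which bounds $v$ from above; together with the fact that $r^{N-1}v'$ is nondecreasing, one obtains on each $[a,b]\subset(0,\infty)$ uniform bounds on $v$ and $v'$, and these transfer through $u=F^{-1}(v)$ and the equation to uniform $C^2$ bounds on $u(\cdot,\alpha)$. Standard ODE compactness then gives $u(\cdot,\alpha)\to u^*$ in $C^2_{\mathrm{loc}}(0,\infty)$, so $u^*$ is a positive, nonincreasing $C^2$ solution of (\ref{eq1.12}). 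For (c), suppose $u^*(0^+)=M<\infty$; then $f(u^*)$ is bounded near $0$, so $(r^{N-1}(u^*)')'=-r^{N-1}f(u^*)$ forces $r^{N-1}(u^*)'\to0$, and $u^*$ extends to a regular solution of (\ref{eq1.4}) with $u^*(0)=M$, i.e.\ $u^*=u(\cdot,M)$; but $u^*\ge u(\cdot,M+1)>u(\cdot,M)$, a contradiction. Hence $u^*(r)\to\infty$ as $r\to0$.

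Finally, (d) follows by monotone convergence. For fixed $\phi\in C^1_0(\R^N)$, stability of each $u(\cdot,\alpha)$ gives $\int_{\R^N}f'(u(|x|,\alpha))\phi^2\,dx\le\int_{\R^N}|\nabla\phi|^2\,dx$. As $\alpha\to\infty$ the integrands $f'(u(|x|,\alpha))\phi^2$ increase to $f'(u^*(|x|))\phi^2$ (because $u$ increases in $\alpha$ and $f'$ is increasing), so the monotone convergence theorem yields
\[
 \int_{\R^N}f'(u^*(|x|))\phi^2\,dx\le\int_{\R^N}|\nabla\phi|^2\,dx<\infty,
\]
which is precisely (\ref{eq1.14}). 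Choosing $\phi$ to be a cutoff equal to $1$ on $B_R$ shows $\int_{B_R}f'(u^*(|x|))\,dx<\infty$ for every $R$, i.e.\ $f'(u^*(|\cdot|))\in L^1_{\mathrm{loc}}(\R^N)$, completing (\ref{eq1.13}). Thus the only genuinely delicate point is the $\alpha$-uniform estimate of step~(a), which the substitution $v=F(u)$ reduces to the elementary differential inequality above; the remaining steps are monotone-convergence and compactness bookkeeping.
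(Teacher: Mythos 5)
Your proof is correct and follows essentially the same route as the paper: the limit $u^*=\lim_{\alpha\to\infty}u(\cdot,\alpha)$ is shown to be finite via the uniform bound $F(u(r,\alpha))\ge r^2/(2N)$ (the paper's Lemma \ref{lem6.1}, which you rederive through the substitution $v=F(u)$ instead of by direct integration of the equation), and (\ref{eq1.13})--(\ref{eq1.14}) are obtained by monotone convergence exactly as in the paper. The remaining differences are cosmetic: the paper passes to the limit in an integral representation of the ODE rather than invoking local $C^2$ compactness, and it deduces $u^*(r)\to\infty$ as $r\to0$ directly from $u^*\ge u(\cdot,\alpha+1)\ge\alpha$ near $r=0$ rather than by your extension-and-contradiction argument.
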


Define $q_{S}$ by the H\"older conjugate of $p_{S} = (N+2)/(N-2)$, i.e., 
$q_{S} = (N+2)/4$. 
As the converse of Theorem \ref{thm1.8}, we obtain the following result.

\begin{theorem}\label{thm1.9}
Assume that 
\begin{equation}
	f'(u)F(u) \leq q_S \quad \mbox{for sufficiently large} \ u.
	\label{eq1.15}
\end{equation}
If there exists a singular radial solution $u^*$ of $(\ref{eq1.1})$ satisfying 
$(\ref{eq1.13})$ and $(\ref{eq1.14})$, then {\rm (II)} holds in Theorem $1.1$.
\end{theorem}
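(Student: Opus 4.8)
The plan is to reduce the entire statement to the two-sided pointwise comparison
\[
|u(r,\alpha)|<u^*(r)\qquad\text{for all }r>0\text{ and all }\alpha>0,
\]
and then to transfer the stability of $u^*$ to every regular solution. The transfer is immediate: $f'$ is even, and since $f''>0$ on $(0,\infty)$ it is strictly increasing on $[0,\infty)$, so the comparison gives $f'(u(|x|,\alpha))=f'(|u(|x|,\alpha)|)\le f'(u^*(|x|))$ pointwise, whence for every $\phi\in C^1_0(\R^N)$
\[
\int_{\R^N}\bigl(|\nabla\phi|^2-f'(u(|x|,\alpha))\phi^2\bigr)\,dx\ \ge\ \int_{\R^N}\bigl(|\nabla\phi|^2-f'(u^*(|x|))\phi^2\bigr)\,dx\ \ge\ 0
\]
by the stability hypothesis (\ref{eq1.14}). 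Thus $u(\cdot,\alpha)$ is stable for every $\alpha>0$, and Theorem \ref{thm1.1} then forces alternative (II). Consequently the whole theorem rests on the comparison, which I expect to be the main obstacle.

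To prove the comparison I would fix $\alpha>0$ and set $w_{\pm}=u^*\mp u(\cdot,\alpha)$, regarded as radial functions; near $r=0$ both are positive because $u^*(r)\to\infty$ by (\ref{eq1.13}) while $u(0,\alpha)=\alpha<\infty$. Let $r_*$ be the first radius at which $\min(w_+,w_-)$ vanishes, i.e.\ at which $|u(\cdot,\alpha)|$ first meets $u^*$; suppose $r_*<\infty$ and, say, $w_+(r_*)=0$. On $(0,r_*)$ we have $|u(\cdot,\alpha)|<u^*$, and using the oddness of $f$ one checks by the mean value theorem that
\[
-\Delta w_{\pm}=c_{\pm}(r)\,w_{\pm}\quad\text{in }B_{r_*},\qquad c_{\pm}(r)=f'(\xi_{\pm}(r)),\quad \xi_{\pm}(r)\in(-u^*,u^*),
\]
so that $c_{\pm}<f'(u^*)$ strictly on $(0,r_*)$. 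Testing the equation for $w_+$ against $w_+$ over $B_{r_*}$ and invoking the stability of $u^*$ produces
\[
\int_{B_{r_*}}f'(u^*)\,w_+^2\,dx\ \le\ \int_{B_{r_*}}|\nabla w_+|^2\,dx\ =\ \int_{B_{r_*}}c_+(r)\,w_+^2\,dx\ <\ \int_{B_{r_*}}f'(u^*)\,w_+^2\,dx,
\]
which is absurd; the first inequality is (\ref{eq1.14}) applied to the zero-extension of $w_+$, the equality is the weak form of the equation, and the last strict inequality uses $c_+<f'(u^*)$ with $w_+>0$ in $B_{r_*}$. Hence $r_*=\infty$ and $|u(\cdot,\alpha)|<u^*$ throughout. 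Working with both $w_+$ and $w_-$ (via oddness of $f$) is what delivers the \emph{two-sided} bound needed for the global transfer step, and it removes any separate discussion of the first zero $r_0(\alpha)$.

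The delicate point, and the precise place where the growth restriction (\ref{eq1.15}) enters, is the admissibility of $w_+$ as a test function in (\ref{eq1.14}) near the origin, where $w_+\sim u^*\to\infty$. Inequality (\ref{eq1.14}) is stated for $C^1_0(\R^N)$, whereas $w_+$ is singular at $0$; one must verify that $\nabla w_+\in L^2$ near $0$ and then approximate by excising a small ball about the origin, a single point having zero $H^1$-capacity since $N\ge 3$. Condition (\ref{eq1.15}), $f'(u)F(u)\le q_S=(N+2)/4$ for large $u$, is exactly what bounds the blow-up rate of $u^*$ at $r=0$: translated into the asymptotics of $u^*$ near the origin it yields $\int_{B_\delta}|\nabla w_+|^2\,dx<\infty$, the exponent $q_S$ being the sharp threshold for this local finiteness (formally $q<q_S$ gives $|\nabla u^*|\in L^2$ near $0$, while $q=q_S$ is only logarithmically divergent). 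The genuinely borderline case, in which the effective exponent approaches $q_S$ so the Dirichlet energy barely diverges, requires a more careful truncation (a logarithmic cut-off in place of a sharp one) and is where most of the technical labor lies; the required asymptotic description of $u^*$ near $r=0$ under (\ref{eq1.15}) I would quote from the singular-solution analysis developed earlier in the paper.

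Finally, I note an instructive reformulation of the transfer step. Since $\xi_{+}>u(\cdot,\alpha)$ gives $c_{+}(r)\ge f'(u(r,\alpha))$, the positive function $w_+$ is a supersolution of the linearized operator $-\Delta-f'(u(|\cdot|,\alpha))$ on $\R^N\setminus\{0\}$, so the Allegretto--Piepenbrink principle yields nonnegativity of its quadratic form, hence stability of $u(\cdot,\alpha)$ directly. This viewpoint makes transparent that everything hinges on the positivity of $w_{\pm}$, i.e.\ on the non-intersection of the regular and singular solutions; the stability of $u^*$ is used precisely to exclude the oscillatory regime (the analogue of $q_{JL}<q<q_S$ for $f(u)=u^p$) in which $w_{\pm}$ would change sign.
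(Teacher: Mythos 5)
Your strategy is close in spirit to the paper's --- both reduce the theorem to the non-intersection of $u(\cdot,\alpha)$ with $u^*$, both use (\ref{eq1.14}) to forbid a first touching point, and both use (\ref{eq1.15}) only through the resulting bound $u^*(r)=O(r^{-(N-2)/2})$ near the origin (the paper's Lemmas \ref{lem6.2}--\ref{lem6.3}) --- and your final transfer step is correct and genuinely shorter than the paper's: from $|u(\cdot,\alpha)|<u^*$, evenness of $f'$ and its monotonicity on $[0,\infty)$ give $f'(u(\cdot,\alpha))\le f'(u^*)$ pointwise, so stability follows at once from (\ref{eq1.14}), whereas the paper first establishes the ordering of the regular solutions among themselves and then invokes Lemma \ref{lem2.2}(i).

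The gap is in the displayed contradiction at $r_*$, which is the heart of the proof. Hypothesis (\ref{eq1.15}) permits the critical blow-up rate $u^*(r)\sim cr^{-(N-2)/2}$ (e.g.\ $f(u)=u^{p_S}$), and in that case $\int_{B_\delta}|\nabla w_+|^2\,dx$ and $\int_{B_\delta}f'(u^*)w_+^2\,dx$ both diverge logarithmically, so all three integrals in your chain can equal $+\infty$ and the strict middle inequality produces no contradiction; your plan to ``verify that $\nabla w_+\in L^2$ near $0$ and then approximate'' cannot be carried out, because that verification fails exactly in the regime the hypothesis is designed to include. (Even off the borderline, the identity $\int|\nabla w_+|^2=\int c_+w_+^2$ has an unexamined boundary term at the origin of size $O(\epsilon^{N-1}\cdot\epsilon^{-(N-2)/2}\cdot\epsilon^{-N/2})=O(1)$.) The repair is not an approximation of $w_+$ but a restructuring: test the equation $-\Delta w_+=c_+w_+$ against $\eta_\epsilon^2w_+$ to get $\int|\nabla(\eta_\epsilon w_+)|^2=\int c_+\eta_\epsilon^2w_+^2+\int|\nabla\eta_\epsilon|^2w_+^2$, combine with (\ref{eq1.14}) applied to $\eta_\epsilon w_+$ to obtain $\int(f'(u^*)-c_+)\eta_\epsilon^2w_+^2\le\int|\nabla\eta_\epsilon|^2w_+^2$, and note that for a logarithmic cutoff the right-hand side tends to $0$ because $w_+^2=O(r^{-(N-2)})$, while the left-hand side is bounded below by a positive constant; only finite quantities appear, so this works up to and including the borderline. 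The paper avoids the issue entirely by arguing at the ODE level: Lemma \ref{lem6.4} converts (\ref{eq1.14}) into a solution of the linearized equation (\ref{eq6.5}) positive on $(0,r_0]$, and the Sturm comparison Lemma \ref{lem6.5}, whose hypothesis $\int_0 dr/(r^{N-1}u_0(r)^2)=\infty$ is precisely what the rate $O(r^{-(N-2)/2})$ supplies, then forbids the touching. As written, your central step is therefore incomplete, though the strategy is repairable.
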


\begin{remark}
(i) It was shown by \cite{MiNab} that the singular solution of (\ref{eq1.1}) is unique 
if the limit $q_{\infty} = \lim_{u \to \infty}q(u)$ 
satisfies $q_{\infty} < q_S$, where $q(u)$ is defined by (\ref{eq1.3}).

(ii) 
As a consequence of Theorems \ref{thm1.8} and \ref{thm1.9},  
(\ref{eq1.1}) has a singular solution $u^*$ satisfying (\ref{eq1.13}) and (\ref{eq1.14}) 
if and only if (II) holds, provided (\ref{eq1.15}) holds.
Note here that the type (II) holds 
if and only if the stable solution set $\{u(\cdot, \alpha)\}_{\alpha > 0}$ 
is unbounded in $L^{\infty}(\R^N)$. 
Analogue phenomena have been observed in the study of 
the nonlinear eigenvalue problem 
\begin{equation}
	\left\{
	\begin{array}{ll}
	\Delta u +\lambda g(u)=0 \quad & \mbox{in} \ \Omega,
	\\
	u>0 & \mbox{in} \ \Omega,
	\\
	u=0 & \mbox{on} \ \partial \Omega,
	\end{array}
	\right.
	\label{eq1.16}
\end{equation}
where $\Omega$ is a bounded and smooth domain in $\R^N$ with $N \geq 3$ and 
$\lam > 0$ is a parameter. 
In (\ref{eq1.16}), assume that $g$ is a $C^1$, nondecreasing, convex function defined 
on $[0, \infty)$ and satisfies $g(0) > 0$ and $\lim_{u \to \infty}\frac{g(u)}{u} = \infty$.
It is well known that there exists a finite extremal value $\overline{\lambda} > 0$ such that, 
if $0 < \lambda < \overline{\lambda}$, then (\ref{eq1.16}) admits a classical stable solution 
$u_{\lambda} \in C^2(\Omega)\cap C(\overline{\Omega})$, 
and if $\lam > \overline{\lambda}$, then (\ref{eq1.16}) has no classical solution. 
For $\lam = \overline{\lambda}$, a weak solution 
$\overline{u}$ of (\ref{eq1.16}) is obtained as the increasing limit of $u_{\lambda}$ 
as $\lambda \uparrow \overline{\lambda}$, and it is called the extremal solution. 
Thus, the extremal solution $(\overline{\lam}, \overline{u})$ is unbounded if and only if 
the stable solution $u_{\lam}$ satisfies 
$\|u_{\lam}\|_{L^{\infty}(\Omega)} \to \infty$ as 
$\lam \uparrow \overline{\lambda}$. 
It was shown by Brezis-V\'azques \cite[Theorem 3.1]{BrVa} that 
(\ref{eq1.16}) has a unbounded solution $(\lam^*, u^*)$ satisfying 
$u^* \in H^1_0(\Omega)$ and 
$$
	\int_{\Omega}\left(|\nabla\varphi|^2-\lambda^* f'(u^*)\varphi^2\right) dx \geq 0
	\quad \textrm{for all}\ \varphi \in C_0^1(\Omega),
$$
if and only if the extremal solution $(\overline{\lam}, \overline{u})$ is unbounded, that is, 
the stable solution set $\{u_{\lam}\}_{\lam \in (0, \overline{\lam})}$ is unbounded in $L^{\infty}(\R^N)$.
In the case of a ball, a sufficient condition of $f(u)$ for the unboundedness of 
stable solution set is given in \cite[Theorem 1.1]{MiNac}. 
\end{remark}

It should be mentioned that the stability of radial solutions 
is closely related to the separation phenomena of solutions. 
(See, e.g., \cite{Has} for H\'enon type equations.) 
Let $u(r, \alpha)$ and $u(r, \beta)$ be solutions of (\ref{eq1.4}) with 
$0 < \alpha < \beta$. 
We will see in Section 2 that $u(r, \alpha)$ is stable if $u(r, \alpha) < u(r, \beta)$ for all $r \geq 0$, 
and that $u(r, \beta)$ is unstable if the graphs of $u(r, \alpha)$ and $u(r, \beta)$ intersect 
each other in $(0, \infty)$. 
Using these properties, we will classify the solution structure 
into three types as in Theorem \ref{thm1.1}.   
An analogue classification can be observed in \cite{BaNaa} and \cite{BaNab} 
for radial solutions to the equations 
$\Delta u + K(|x|)u^p = 0$ and $\Delta u + K(|x|)e^u = 0$, respectively.

In order to show the existence and nonexistence of stable radial solutions 
in the proof of Theorems \ref{thm1.2}, \ref{thm1.5} and \ref{thm1.6}, 
we will utilize generalized scaling transformations introduced by \cite{Fuj, FuIoa, FuIob, Miy}. 
Let $g(u) = u^p$ with $p > 1$ or $g(u) = e^u$, and define 
$G(u) = \int^{\infty}_u 1/g(t)dt$.  
For functions $u$, $v$ and a constant $\lam > 0$, we consider a relation 
\begin{equation}
	F(u(x)) = \lam^2 G(v(y)) 
	\quad \mbox{with} \ x = \lam y \quad \mbox{for} \ x, y \in \R^N.
	\label{eq1.17}
\end{equation}
For a solution $v$ of the equation $\Delta v + g(v) = 0$, define $u(x)$ by (\ref{eq1.17}). 
Then $u(x)$ satisfies 
\begin{equation}
	\Delta_x u + f(u) + \frac{|\nabla_x u|^2}{f(u)F(u)}(q - f'(u)F(u)) = 0 
	\quad \mbox{for} \ x \in \R^N,
	\label{eq1.18}
\end{equation}
where $q = p/(p-1)$ if $g(u) = u^p$ and $q = 1$ if $g(u) = e^u$. 
From (\ref{eq1.18}) we will construct a subsolution of (\ref{eq1.1}) to obtain stable solutions
in the proof of Theorem \ref{thm1.2}.
On the other hand, for a solution $u(x)$ of (\ref{eq1.1}), define $v(y)$ by (\ref{eq1.17}).
Then $v(y)$ satisfies 
$$
	\Delta_y v +g(v) + 
	\frac{|\nabla_y v|^2}{g(v)G(v)}(f'(u(x))F(u(x))-q) = 0 \quad \mbox{for} \  y \in \R^N.
$$
We will show that, for a solution $u(\cdot, \alpha)$ of (\ref{eq1.4}), 
the corresponding solution $v$ converges 
to a solution of $\Delta w + g(w) = 0$ as $\alpha \to 0$ and $\alpha \to \infty$, respectively, 
in the proof of Theorems \ref{thm1.5} and \ref{thm1.6}. 
Using these properties, we obtain the nonexistence of stable radial solutions. 

The paper is organized as follows. 
In Section 2, we study the separation phenomena of radial solutions 
to obtain the structure of stable solutions, and give the proof of Theorem 1.1. 
In Section 3, we will introduce the generalized scale transformations, 
and in Sections 4 and 5, we show the existence and nonexistence of stable radial 
solutions, respectively. 
In Section 6, we investigate the relation between the existence of the singular stable solution
and the solution structure.
Finally, in Section 7, we give examples of nonlinearities 
satisfying the hypotheses in our results.

\section{Structure of radial stable solutions: Proof of Theorem \ref{thm1.1}}

First we will show some intersection properties of the solution $u(r, \alpha)$ of (\ref{eq1.4}). 

\begin{lemma}\label{lem2.1}
Let $\alp, \beta > 0$, $\alpha \neq \beta$, and assume that 
$u(r, \alpha)$ has at least one zero in $(0, \infty)$, that is, $r_0(\alpha) < \infty$. 
Then $u(r,\beta)-u(r,\alp)$ has at least one zero in $(0, r_0(\alp))$. 
\end{lemma}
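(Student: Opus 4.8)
The plan is to argue by contradiction by means of a Wronskian-type identity for the two solutions $u(r,\alpha)$ and $u(r,\beta)$. First I would record two elementary monotonicity facts that follow from integrating $(\ref{eq1.4})$ in the form $r^{N-1}u'(r,\alpha) = -\int_0^r s^{N-1}f(u(s,\alpha))\,ds$: as long as $u(\cdot,\alpha)>0$ it is strictly decreasing, and at its first zero one has $u'(r_0(\alpha),\alpha)<0$ (the integral being strictly positive there); the same holds for $u(\cdot,\beta)$. Second, since $f(0)=0$ and $f''>0$ on $(0,\infty)$, the function $g(t):=f(t)/t$ is strictly increasing on $(0,\infty)$, because $t\mapsto tf'(t)-f(t)$ vanishes at $0$ and has positive derivative $tf''(t)$.

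The core object is the Wronskian $W(r) = r^{N-1}\big(u'(r,\beta)\,u(r,\alpha) - u(r,\beta)\,u'(r,\alpha)\big)$. Differentiating and substituting the equation $(\ref{eq1.4})$ for each solution, the first-order and $\tfrac{N-1}{r}$ terms cancel, leaving $W'(r) = -r^{N-1}\big(f(u(r,\beta))u(r,\alpha)-f(u(r,\alpha))u(r,\beta)\big) = -r^{N-1}u(r,\alpha)u(r,\beta)\big(g(u(r,\beta))-g(u(r,\alpha))\big)$. Moreover $W(r)\to 0$ as $r\to 0^+$, because $r^{N-1}\to 0$ while the bracket stays bounded. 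The key idea is to work not on all of $(0,r_0(\alpha))$ but on $I=(0,R)$ with $R:=\min\{r_0(\alpha),r_0(\beta)\}$, where both solutions are positive, so that the sign of $W'$ is governed solely by the monotonicity of $g$ and the sign of $w:=u(\cdot,\beta)-u(\cdot,\alpha)$ — the behaviour of $f$ on the negative axis never enters.

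Suppose then that $w$ has no zero in $(0,r_0(\alpha))$; since $w(0)=\beta-\alpha\neq 0$, $w$ has a fixed sign there. If $\beta>\alpha$, then $0<u(\cdot,\alpha)<u(\cdot,\beta)$ on $(0,r_0(\alpha))$ forces $R=r_0(\alpha)$ and makes $W'<0$ on $I$, hence $W(R)<0$; but evaluating $W$ at $R=r_0(\alpha)$, where $u(r_0(\alpha),\alpha)=0$ and $u'(r_0(\alpha),\alpha)<0$, gives $W(R)=-R^{N-1}u(R,\beta)u'(R,\alpha)\geq 0$, a contradiction. If $\beta<\alpha$, then $w<0$ forces $R=r_0(\beta)$ and makes $W'>0$, hence $W(R)>0$; evaluating at $R=r_0(\beta)$, where $u(r_0(\beta),\beta)=0$ and $u'(r_0(\beta),\beta)<0$, gives $W(R)=R^{N-1}u'(R,\beta)u(R,\alpha)\leq 0$, again a contradiction. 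Either way $w$ must vanish in $(0,r_0(\alpha))$.

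The step I expect to be the main obstacle is not the computation but the correct choice of comparison interval: on the naive interval $(0,r_0(\alpha))$ the competitor $u(\cdot,\beta)$ may cross zero and become negative, where $f$ is concave and the sign of $W'$ is no longer controlled by $g$. Restricting to $I=(0,R)$ — equivalently, comparing on the region where both solutions stay positive — is what reduces everything to the single monotonicity property of $f(t)/t$ and lets both sign cases close cleanly; pinning down that $R$ equals $r_0(\alpha)$ in one case and $r_0(\beta)$ in the other, together with the boundary sign of the relevant derivative, is the delicate point.
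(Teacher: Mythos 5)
Your argument is correct. Every step checks out: the Wronskian identity $W'(r)=-r^{N-1}u(r,\alpha)u(r,\beta)\bigl(g(u(r,\beta))-g(u(r,\alpha))\bigr)$ with $g(t)=f(t)/t$ is right, the strict monotonicity of $g$ on $(0,\infty)$ follows from $tf'(t)-f(t)>0$ (which in turn follows from $f''>0$ and $f(0)=0$), the limit $W(0^+)=0$ holds, the boundary signs $u'(r_0(\cdot),\cdot)<0$ come from the integral form of the equation, and in each of the two sign cases the identification of $R=\min\{r_0(\alpha),r_0(\beta)\}$ with the correct endpoint is justified. The route is close in spirit to the paper's but not identical: the paper runs a Sturm-type comparison between $\phi_1=u(\cdot,\alpha)$, which solves the linear equation with potential $m_1=f(u(\cdot,\alpha))/u(\cdot,\alpha)$, and the difference $\phi_2=u(\cdot,\beta)-u(\cdot,\alpha)$, which solves it with the mean-value potential $m_2$ lying between $f'(u(\cdot,\alpha))$ and $f'(u(\cdot,\beta))$; the inequality $m_2>m_1$ is again exactly $uf'(u)>f(u)$. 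You instead pair the two solutions directly, which replaces the mean-value potential by the comparison of the secant slopes $g(u(\beta))$ versus $g(u(\alpha))$. What your version buys is symmetry: both orderings of $\alpha$ and $\beta$ are handled by one computation with the roles of the two endpoints $r_0(\alpha)$, $r_0(\beta)$ swapped, whereas the paper proves the case $\alpha<\beta$ and then reduces $\alpha>\beta$ to it by exchanging notation (which tacitly requires the observation, which you make explicit, that $u(\cdot,\beta)<u(\cdot,\alpha)$ forces $r_0(\beta)\leq r_0(\alpha)<\infty$). Your explicit restriction to the interval where both solutions are positive is exactly the point the paper handles implicitly through that role exchange; both proofs rest on the same single consequence of convexity.
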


\begin{proof}
First we consider the case $\alpha < \beta$. 
Assume by contradiction that 
\begin{equation}
	u(r, \beta)-u(r, \alp) > 0 \quad \mbox{for} \ 0 \leq r < r_0(\alp).
	\label{eq2.1}
\end{equation}
Put $\phi_1(r) = u(r, \alp)$ and 
$\phi_2(r) = u(r, \beta)-u(r, \alp)$.  
Note here that the equation in (\ref{eq1.4}) can be written as 
$(r^{N-1}u')' + r^{N-1}f(u) = 0$ for $r > 0$. 
Then, for $i = 1, 2$, $\phi_i$ satisfies $\phi_i'(0) = 0$ and 
\begin{equation}
	(r^{N-1}\phi_i')' + r^{N-1}m_i \phi_i = 0 
	\quad \mbox{for} \ 0 < r < r_0(\alpha),
	\label{eq2.2}
\end{equation}
where $m_1(r) = f(u(r, \alpha))/u(r, \alpha)$ 
and $m_2$ satisfies  
$$
	f'(u(r, \alpha)) < m_2(r) < f'(u(r, \beta)) 
	\quad \mbox{for} \ 0 < r < r_0(\alpha),
$$
Since $f''(u) > 0$, we have $uf'(u) - f(u) > 0$ for $u > 0$, and hence 
$m_2(r) > m_1(r)$ for $0 < r < r_0(\alp)$ and  
\begin{equation}
	\phi_1(r_0(\alp)) = 0, \quad \phi_1'(r_0(\alp)) \leq 0 
	\quad \mbox{and} \quad \phi_2(r_0(\alp)) \geq 0.
	\label{eq2.3}
\end{equation}
Multiplying (\ref{eq2.2}) for $i = 1, 2$ by $\phi_2$ and $\phi_1$, 
respectively, 
and integrating them on $[0, r_0(\alp)]$, we obtain 
$$
	\left.\frac{}{}
	r^{N-1}(\phi_1'\phi_2 - \phi_1\phi_2')\right|_{r= 0}^{r_0(\alp)} = 
	-\int^{r_0(\alp)}_0 s^{N-1}(m_1(s)-m_2(s))\phi_1\phi_2ds > 0.
$$
On the other hand, from (\ref{eq2.3}), we have
$$
	\left.\frac{}{}
	r^{N-1}(\phi_1'\phi_2 - \phi_1\phi_2')\right|_{r= 0}^{r_0(\alp)} = 
	r_0(\alp)^{N-1}\phi_1'(r_0(\alp))\phi_2(r_0(\alp)) \leq 0.
$$
This is a contradiction.
Thus $u(r, \beta)-u(r, \alp)$ has at least one zero in $(0, r_0(\alp))$.

Next we consider the case $\alpha > \beta$. 
Assume by contradiction that $u(r, \alp)-u(r, \beta)$ has no zero in $(0, r_0(\alp))$. 
Then we have $r_0(\beta) \leq r_0(\alp)$ and 
\begin{equation}
	u(r,\alp) - u(r, \beta) > 0 \quad \mbox{for} \ 0 \leq r < r_0(\beta).
	\label{eq2.4}
\end{equation}
Exchanging the roles of $\alpha$ and $\beta$ in (\ref{eq2.4}), we obtain (\ref{eq2.1}), 
which leads to a contradiction by the argument above.
Thus $u(r, \alp)-u(r, \beta)$ has at least one zero in $(0, r_0(\alp))$. 
\end{proof}

Following the arguments in the proof of \cite[Theorem 1.1 and Lemma 4.1]{Has}, 
we obtain the next lemma. 

\begin{lemma}\label{lem2.2}
Let $0 < \beta < \alpha$. 
\begin{enumerate}
\item[{\rm (i)}] 
If $0 < u(r, \beta) < u(r, \alpha)$ for all $r \geq 0$, then 
then $u(r, \beta)$ is stable.

\item[{\rm (ii)}] Assume that there exists $r_0 > 0$ such that
$0 < u(r, \beta) < u(r, \alpha)$ for $0 < r < r_0$ and 
$u(r_0, \beta) = u(r_0, \alpha)$. 
Then $u(r, \alpha)$ is unstable.
\end{enumerate}
\end{lemma}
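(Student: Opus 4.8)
The plan is to prove both statements via the standard connection between stability of a radial solution and the existence of a positive solution (or supersolution) of the linearized equation, using the difference $u(r,\alpha)-u(r,\beta)$ as a test object. For a radial function $w(r)$, stability is equivalent to nonnegativity of the quadratic form $Q(\phi)=\int_{\R^N}(|\nabla\phi|^2-f'(w)\phi^2)\,dx$ over $\phi\in C^1_0(\R^N)$. A convenient sufficient condition (Allegretto--Piccone / the standard ``positive supersolution implies stability'' criterion) is: if there exists a positive function $\psi>0$ on $(0,\infty)$ satisfying $-(r^{N-1}\psi')'\ge r^{N-1}f'(w)\psi$, then $Q\ge 0$. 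The key observation is that the difference of two solutions of \eqref{eq1.4} almost satisfies the linearized equation: setting $\psi(r)=u(r,\alpha)-u(r,\beta)$, subtracting the two equations and using the mean value form of $f(u(\cdot,\alpha))-f(u(\cdot,\beta))$, we get
\[
	(r^{N-1}\psi')'+r^{N-1}M(r)\psi=0,
	\quad\text{where }f'(u(r,\beta))<M(r)<f'(u(r,\alpha)),
\]
the inequalities coming from the strict convexity $f''>0$ exactly as in the proof of Lemma \ref{lem2.1}.

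For part (i), I would take $\psi(r)=u(r,\alpha)-u(r,\beta)>0$ as the candidate supersolution for the linearization at $w=u(\cdot,\beta)$. Since $M(r)>f'(u(r,\beta))$, the identity above gives
\[
	-(r^{N-1}\psi')'=r^{N-1}M(r)\psi>r^{N-1}f'(u(r,\beta))\psi,
\]
so $\psi$ is a positive strict supersolution of the equation linearized at $u(\cdot,\beta)$. The positivity of $\psi$ on all of $[0,\infty)$ is exactly the hypothesis. Plugging $\psi$ into the Picone-type identity and integrating against $\phi^2/\psi$ for $\phi\in C^1_0(\R^N)$ yields $Q(\phi)\ge 0$, i.e.\ $u(\cdot,\beta)$ is stable. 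The main technical care here is the behavior at the origin and the integrability/boundary terms when applying Picone's inequality; since $\psi'(0)=0$ and $\psi$ is $C^1$ and strictly positive near $0$, the boundary contributions vanish and no singularity arises.

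For part (ii), I would argue by contradiction using the first touching point $r_0$. Suppose $u(\cdot,\alpha)$ is stable. Now take $w=u(\cdot,\alpha)$ and test against the eigenfunction-type comparison on the ball $B_{r_0}$. Set $\psi(r)=u(r,\alpha)-u(r,\beta)$, which satisfies $\psi>0$ on $(0,r_0)$, $\psi(r_0)=0$, and the linearized-type equation with $M(r)<f'(u(r,\alpha))$; this makes $\psi$ a positive \emph{subsolution} of the operator $-(r^{N-1}\,\cdot\,')'-r^{N-1}f'(u(\cdot,\alpha))\,\cdot$ vanishing on $\partial B_{r_0}$, forcing the first Dirichlet eigenvalue of that operator on $B_{r_0}$ to be strictly negative (the strict inequality $M<f'(u(\cdot,\alpha))$ coming again from $f''>0$). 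Extending the corresponding negative-energy test function by zero to all of $\R^N$ produces a $\phi\in C^1_0(\R^N)$ (or an $H^1_0$ function, mollified if necessary) with $Q(\phi)<0$, contradicting stability of $u(\cdot,\alpha)$.

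The main obstacle I anticipate is part (ii): turning the touching at $r_0$ into a genuine negative value of $Q$ over compactly supported test functions requires producing a sign-definite test function on $B_{r_0}$ with strictly negative Rayleigh quotient, and then checking that its zero-extension is admissible (Lipschitz across $\partial B_{r_0}$, so it lies in $H^1_0$ and can be mollified into $C^1_0$). Concretely I expect to multiply the equation for $\psi$ by $\psi$ itself, integrate on $[0,r_0]$, and compare with the quadratic form to extract $Q(\psi\cdot\mathbf 1_{B_{r_0}})<0$; the strictness $M<f'(u(\cdot,\alpha))$ is what guarantees the inequality is strict rather than merely $\le 0$, and the vanishing boundary terms at $r=0$ and $r=r_0$ (using $\psi'(0)=0$, $\psi(r_0)=0$) must be verified carefully.
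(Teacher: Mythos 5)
Your proposal is correct and follows essentially the same route as the paper: part (i) is proved by using the positive difference $w=u(\cdot,\alpha)-u(\cdot,\beta)$ as a strict supersolution of the linearization at $u(\cdot,\beta)$ and testing with $\phi^2/w$ (the Picone/Young computation you describe), and part (ii) by multiplying the differential inequality $-\Delta w< f'(u(\cdot,\alpha))w$ by $w$ on $B_{r_0}$, extending $w$ by zero, and invoking the density of $C^1_0(\R^N)$ in $H^1(\R^N)$ to produce an admissible test function with negative energy. The technical points you flag (boundary terms at $r=0$ and $r=r_0$, strictness from $f''>0$, zero-extension lying in $H^1$) are exactly the ones the paper handles.
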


\begin{proof}
(i) Set $w(r) = u(r, \alpha) -u(r, \beta)$. 
Then $w(r) > 0$ for $r \geq 0$. 
Since $f''(u) > 0$ for $u > 0$, 
by the mean value theorem, $w(|x|)$ with $x \in \R^N$ satisfies 
$$
	-\Delta w > f'(u(|x|, \beta))w \quad \mbox{in} \ \R^N.
$$
Take any $\phi \in C^1_0(\R^N)$. 
Multiplying the above by $\phi^2/w$ and integrating by parts, we obtain 
$$
	\int_{\R^N}f'(u(|x|, \beta))\phi^2 dx < -\int_{\R^N}\frac{\phi^2}{w}\Delta w dx 
	= \int_{\R^N}\left(\frac{2\phi}{w}\nabla \phi\cdot \nabla w 
	- \frac{\phi^2}{w^2}|\nabla w|^2\right)dx.
$$
Applying Young's inequality, we obtain 
$$
	\int_{\R^N}\frac{2\phi}{w}\nabla \phi\cdot \nabla wdx \leq 
	\int_{\R^N}\frac{\phi^2}{w^2}|\nabla w|^2 dx 
	+ \int_{\R^N}|\nabla \phi|^2dx.
$$
Thus we obtain  
$$
	\int_{\R^N}f'(u(|x|, \beta))\phi^2 dx < \int_{\R^N}|\nabla \phi|^2dx,
$$	
hence $u(r, \beta)$ is stable.

(ii) Set $w(r) = u(r, \alpha) -u(r, \beta)$. 
Then $w(r) > 0$ for $0 \leq r < r_0$ and $w(r_0) = 0$.
By the mean value theorem, $w(|x|)$ with $x \in \R^N$ satisfies 
$$
	-\Delta w < f'(u(|x|, \alpha))w \quad \mbox{for} \ |x| < r_0.
$$
Multiplying the above by $w$, and integrating by parts on $|x| < r_0$, we obtain 
$$
	\int_{B_{r_0}}|\nabla w|^2 dx < \int_{B_{r_0}} f'(u(|x|, \alpha)w^2 dx,
$$
where $B_{r_0} = \{x \in \R^N: |x| < r_0\}$. 
Extending $w(r) = 0$ for $r > r_0$, we have $w(|\cdot|) \in H^1(\R^N)$.
Since $C_0^1(\R^N)$ is dense in $H^1(\R^N)$, there exists $\phi \in C_0^1(\R^N)$ such that 
$$
	\int_{\R^N}|\nabla \phi|^2 dx < \int_{\R^N} f'(u(|x|, \alpha)\phi^2 dx.
$$
Then $u(r, \alpha)$ is unstable.
\end{proof}

\begin{lemma}\label{lem2.3}
Assume that $u(r, \alpha)$ is stable. 
Then $r_0(\alpha) = \infty$ and, for any $\beta \in (0, \alpha)$, 
$u(r, \beta)$ is stable and $0 < u(r, \beta) < u(r, \alpha)$ for all $r \geq 0$.  
\end{lemma}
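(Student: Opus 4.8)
The plan is to run a contradiction argument on the intersection dichotomy furnished by Lemmas \ref{lem2.1} and \ref{lem2.2}. Throughout, for a fixed $\beta \in (0, \alpha)$ set $w(r) = u(r, \alpha) - u(r, \beta)$, so that $w(0) = \alpha - \beta > 0$; by continuity, if $w$ vanishes on $(0, \infty)$ at all then it has a well-defined first (smallest positive) zero $r_*$, with $w > 0$ on $[0, r_*)$. The two conclusions of the lemma — that $r_0(\alpha) = \infty$, and that each lower solution $u(\cdot, \beta)$ is positive, lies strictly below $u(\cdot, \alpha)$, and is stable — will all be extracted from the same scheme.

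First I would prove $r_0(\alpha) = \infty$. Assuming instead $r_0(\alpha) < \infty$, fix any $\beta \in (0, \alpha)$. By Lemma \ref{lem2.1}, $w$ has a zero in $(0, r_0(\alpha))$, so the first zero $r_*$ exists and satisfies $r_* < r_0(\alpha)$, whence $u(r_*, \alpha) > 0$ and $u(r_*, \beta) = u(r_*, \alpha) > 0$. The crux is a positivity check: I must guarantee $u(r, \beta) > 0$ on $(0, r_*)$ before invoking Lemma \ref{lem2.2}(ii). This follows by applying Lemma \ref{lem2.1} with the roles of $\alpha$ and $\beta$ interchanged: if the first zero $r_0(\beta)$ of $u(\cdot, \beta)$ satisfied $r_0(\beta) \leq r_*$, then $w$ would have to vanish somewhere in $(0, r_0(\beta)) \subseteq (0, r_*)$, contradicting the minimality of $r_*$. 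Hence $r_0(\beta) > r_*$, so $0 < u(r, \beta) < u(r, \alpha)$ on $(0, r_*)$ with $u(r_*, \beta) = u(r_*, \alpha)$, and Lemma \ref{lem2.2}(ii) forces $u(\cdot, \alpha)$ to be unstable, contradicting the hypothesis. Therefore $r_0(\alpha) = \infty$.

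Next, knowing $u(\cdot, \alpha) > 0$ everywhere, I would fix $\beta \in (0, \alpha)$ and show the two graphs never meet. If they did, the first zero $r_*$ of $w$ would again exist with $u(r_*, \alpha) = u(r_*, \beta) > 0$, and the identical reversed-roles application of Lemma \ref{lem2.1} would yield $r_0(\beta) > r_*$ and hence $u(r, \beta) > 0$ on $(0, r_*)$; Lemma \ref{lem2.2}(ii) would then make $u(\cdot, \alpha)$ unstable, a contradiction. Thus $w > 0$ for all $r \geq 0$, i.e. $u(r, \beta) < u(r, \alpha)$ throughout. Positivity of $u(\cdot, \beta)$ now drops out of Lemma \ref{lem2.1} once more: were $r_0(\beta) < \infty$, the reversed-roles statement would force $w$ to vanish in $(0, r_0(\beta))$, contradicting $w > 0$ everywhere. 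Consequently $0 < u(r, \beta) < u(r, \alpha)$ for all $r \geq 0$, and Lemma \ref{lem2.2}(i) gives the stability of $u(\cdot, \beta)$.

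The main obstacle is precisely the positivity bookkeeping: Lemma \ref{lem2.2}(ii) requires $u(\cdot, \beta)$ to remain strictly positive up to the first crossing, and a priori $u(\cdot, \beta)$ might dip to zero earlier and reappear. The uniform device that removes this difficulty in every part is to invoke Lemma \ref{lem2.1} with $\alpha$ and $\beta$ exchanged, which converts any hypothetical early zero of $u(\cdot, \beta)$ into an earlier crossing of $w$ and thereby violates the minimality of the first crossing $r_*$. Once this observation is in place, all three assertions follow from the single contradiction scheme without further computation.
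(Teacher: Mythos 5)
Your proof is correct and follows essentially the same route as the paper: both arguments run the same contradiction scheme through Lemma \ref{lem2.1} (applied in both orderings of $\alpha$ and $\beta$) and Lemma \ref{lem2.2}. The only difference is cosmetic — you establish non-intersection before $r_0(\beta)=\infty$ while the paper does the reverse, and your explicit verification that $u(\cdot,\beta)$ stays positive up to the first crossing before invoking Lemma \ref{lem2.2}(ii) is in fact slightly more careful than the paper's terser presentation.
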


\begin{proof}
We first show $r_0(\alpha) = \infty$. 
Assume by contradiction that $r_0(\alpha) < \infty$. 
By Lemma~\ref{lem2.1}, for $\beta \in (0, \alpha)$, $u(r, \alpha)-u(r, \beta)$ has at 
least one zero in $(0, r_0(\alpha))$. 
Lemma \ref{lem2.2} (ii) implies that $u(r, \alpha)$ is unstable, 
which is a contradiction. Thus $r_0(\alpha) = \infty$. 

Let $\beta \in (0, \alpha)$. 
We show that $r_0(\beta) = \infty$. 
Assume by contradiction that $r_0(\beta) < \infty$. 
By Lemma \ref{lem2.1} (exchanging the notation of $\alpha$ and $\beta$), 
we see that $u(r, \alpha)-u(r, \beta)$ has at least one zero in $(0, r_0(\beta))$. 
Hence, by Lemma \ref{lem2.2} (ii), $u(r, \alpha)$ is unstable, 
which is a contradiction. Thus $r_0(\beta) = \infty$.  
Applying Lemma \ref{lem2.2} (ii) again, we obtain $u(r, \beta) < u(r, \alpha)$ for all $r \geq 0$. 
Lemma \ref{lem2.2} (i) implies that $u(r, \beta)$ is stable..
\end{proof}

\begin{lemma}\label{lem2.4}
Assume that $u(r, \beta)$ is stable for all $\beta \in (0, \alpha)$ with some $\alpha > 0$. 
Then the following hold:
\begin{enumerate}
\item[{\rm (i)}] For any $0 < \beta_1 < \beta_2 < \alpha$, one has 
\begin{equation}
	0 < u(r, \beta_1) < u(r, \beta_2) \quad \mbox{for} \ r \geq 0;
	\label{eq2.5}
\end{equation}

\item[{\rm (ii)}] $u(r, \alpha)$ is stable. 
\end{enumerate}
\end{lemma}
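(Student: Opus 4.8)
The plan is to read off part (i) directly from the already-proved Lemma~\ref{lem2.3}, and then to obtain part (ii) by a monotone-limit argument that passes $\beta \uparrow \alpha$ through the stability inequality. For part (i), I fix $0 < \beta_1 < \beta_2 < \alpha$. Since $\beta_2 \in (0, \alpha)$, the hypothesis says $u(r, \beta_2)$ is stable, so I apply Lemma~\ref{lem2.3} with $\beta_2$ in the role of $\alpha$ there: this gives $r_0(\beta_2) = \infty$ together with $0 < u(r, \beta_1) < u(r, \beta_2)$ for all $r \geq 0$, which is exactly (\ref{eq2.5}).

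For part (ii), I first record a uniform bound. Each $u(r, \beta)$ with $\beta < \alpha$ satisfies $(r^{N-1}u')' = -r^{N-1}f(u) < 0$ while $u > 0$, so $r^{N-1}u'$ decreases from $0$ and hence $u' < 0$ for $r > 0$; combined with $r_0(\beta) = \infty$ (from Lemma~\ref{lem2.3}) this yields $0 < u(r, \beta) < \beta < \alpha$ for all $r > 0$. By part (i) the map $\beta \mapsto u(r, \beta)$ is increasing on $(0, \alpha)$ for each fixed $r$, so the pointwise increasing limit $U(r) := \lim_{\beta \uparrow \alpha} u(r, \beta)$ exists and obeys $0 < U(r) \leq \alpha$. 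Writing (\ref{eq1.4}) in the integral form $u(r) = \alpha - \int_0^r s^{1-N}\int_0^s t^{N-1} f(u(t))\,dt\,ds$ and using the domination $f(u(\cdot, \beta)) \leq f(\alpha)$ together with dominated convergence, I will verify that $U$ solves (\ref{eq1.4}) with $U(0) = \alpha$ and $U'(0) = 0$; by uniqueness $U = u(\cdot, \alpha)$. In particular $u(r, \alpha) = U(r) > 0$ for every $r$, so $r_0(\alpha) = \infty$ and $u(\cdot, \alpha)$ is a genuine global solution on $\R^N$.

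It then remains to pass to the limit in the quadratic form. Fixing $\phi \in C^1_0(\R^N)$ with support in some ball $B_R$, the stability of each $u(r, \beta)$ gives $\int_{\R^N} |\nabla \phi|^2\,dx \geq \int_{\R^N} f'(u(|x|, \beta))\phi^2\,dx$. Because $u(\cdot, \beta) \uparrow u(\cdot, \alpha)$ monotonically with the continuous limit $u(\cdot, \alpha)$ on the compact set $\overline{B_R}$, Dini's theorem (equivalently, continuous dependence on initial data) forces uniform convergence there, and continuity of $f'$ then gives $f'(u(|x|, \beta)) \to f'(u(|x|, \alpha))$ uniformly on $B_R$. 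Letting $\beta \uparrow \alpha$ produces $\int_{\R^N}\bigl(|\nabla \phi|^2 - f'(u(|x|, \alpha))\phi^2\bigr)\,dx \geq 0$, and since $\phi$ is arbitrary, $u(r, \alpha)$ is stable.

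The main obstacle is the middle step: confirming that the bounded monotone limit $U$ is genuinely the solution $u(\cdot, \alpha)$ on the whole half-line, i.e.\ that $r_0(\alpha) = \infty$, rather than merely a pointwise limit. This is precisely where the uniform a priori bound $u(r, \beta) < \alpha$ (which dominates $f(u(\cdot, \beta))$ and prevents loss of a solution in the limit) and the uniqueness for (\ref{eq1.4}) are indispensable; without global existence of $u(\cdot, \alpha)$ the stability inequality would not even be meaningful for test functions of large support.
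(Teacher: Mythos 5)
Your proof is correct and follows essentially the same route as the paper: part (i) is read off from Lemma~\ref{lem2.3} applied with $\beta_2$ in place of $\alpha$, and part (ii) passes $\beta \uparrow \alpha$ through the stability inequality, the paper invoking continuous dependence on the initial value plus the monotone convergence theorem where you verify the identification $U = u(\cdot,\alpha)$ via the integral equation and use Dini's theorem for uniform convergence. Your extra care in confirming $r_0(\alpha)=\infty$ and $U=u(\cdot,\alpha)$ is a more detailed rendering of the same argument, not a different one.
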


\begin{proof}
Let $0 < \beta_1 < \beta_2 < \alpha$. 
By the assumption, $u(r, \beta_1)$ and $u(r, \beta_2)$ are stable. 
Applying Lemma \ref{lem2.3} with $\alpha = \beta_2$, we obtain (\ref{eq2.5}), 
and hence (i) holds.
Let $\beta \in (0, \alpha)$. 
Since $u(r, \beta)$ is stable, we have 
\begin{equation}
	\int_{\R^N}(|\nabla \phi|^2 - f'(u(|x|, \beta))\phi^2) dx \geq 0
	\label{eq2.6}
\end{equation}
for $\phi \in C^1_0(\R^N)$.
Assume that ${\rm supp}\ \phi \subset \{x \in \R^N: |x| \leq R\}$ with some $R > 0$.   
From (\ref{eq2.5}) it follows that $u(r, \beta)$ is increasing in $\beta \in (0, \alpha)$ 
for each fixed $r \geq 0$.
By the continuous dependence of the solution $u(r, \alpha)$ with respect to $\alpha > 0$, 
we see that $u(r, \beta) \to u(r, \alpha)$ as $\beta \to \alpha$ for each fixed $r \in [0, R]$. 
Let $\beta \to \alpha$ in (\ref{eq2.6}). 
Then, by the monotone convergence theorem, we obtain 
$$
	\int_{\R^N}(|\nabla \phi|^2 - f'(u(|x|, \alpha))\phi^2 dx \geq 0.
$$
Thus $u(r, \alpha)$ is stable, and hence (ii) holds. 
\end{proof}

\begin{proof}[Proof of Theorem \ref{thm1.1}] 
Lemma \ref{lem2.3} implies that $r_0(\alpha) = \infty$ if $u(r, \alpha)$ is stable.
Define a set $A \in (0, \infty)$ by 
$$
	A = \{\alpha > 0: \mbox{$u(r, \alpha)$ is stable}\}.
$$
Then (I) holds if $A = \emptyset$. Assume that $A \neq \emptyset$, and 
define $\alpha^* = \sup A$. Then $\alpha^* \in (0, \infty]$. 
Take any $\alpha \in (0, \alpha^*)$. 
By the definition of $\alpha^*$, there exists $\hat{\alpha} \in (\alpha, \alpha^*)$ 
such that $\hat{\alpha} \in A$, 
and hence $u(r, \alpha)$ is stable by Lemma \ref{lem2.3}.
Thus we obtain $(0, \alpha^*) \subset A$.
If $\alpha^* = \infty$, we have $A = (0, \infty)$ and (II) holds.
If $\alpha^* < \infty$, then $u(r, \alpha)$ is stable for all $\alpha \in (0, \alpha^*)$ 
and $u(r, \alpha^*)$ is also stable by Lemma \ref{lem2.4} (ii), and hence $A = (0, \alpha^*]$. 
Thus (III) holds.
By Lemma \ref{lem2.4} (i), $u(r, \alpha)$ is strictly increasing in $\alpha \in (0, \alpha^*)$ 
for each fixed $r > 0$.
\end{proof}


\section{Generalized scaling transformations}

In this section, we will introduce generalized transformations of solutions to (\ref{eq1.1}).
Let $g(u) = u^p$ with $p > 1$ or $g(u) = e^u$. 
It is well known that the equation 
\begin{equation}
	\Delta u + g(u) = 0 \quad \mbox{in} \ \R^N
\label{eq3.1}
\end{equation}
is invariant under the transformation, with $\lam > 0$, 
\begin{equation}
	u_{\lam}(x) = \left\{
	\begin{array}{ll}
	\lam^{2/(p-1)}u(\lam x) \quad & \mbox{if} \ g(u) = u^p,
	\\[1ex]
	u(\lam x) + 2\log \lam \quad & \mbox{if} \ g(u) = e^u,
	\end{array}
	\right.
\label{eq3.2}
\end{equation}
that is, if $u$ solves (\ref{eq3.1}), then 
$u_{\lam}$ defined by (\ref{eq3.2}) also solves (\ref{eq3.1}). 
It was observed by Fujishima \cite{Fuj} that the both transformations in (\ref{eq3.2}) 
can be written in a unified way by 
\begin{equation}
	u_{\lam}(x) = G^{-1}(\lam^{-2}G(u(\lam x))),
\label{eq3.3}
\end{equation}
where 
\begin{equation}
	G(u) = \int^{\infty}_u \frac{dt}{g(t)} = 
	\left\{
	\begin{array}{ll}
	u^{1-p}/(p-1) \quad &\mbox{if} \ g(u) = u^p,
	\\[1ex]
	e^{-u} \quad &\mbox{if} \ g(u) = e^u.
	\end{array}
	\right.
\label{eq3.4}
\end{equation}

We will generalize the transformation of (\ref{eq3.3}).
For $x \in \R^N$, we denote by 
$\nabla_x = (\partial/\partial x_1, \cdots, \partial/\partial x_N)$ and 
$\Delta_x = \partial^2/\partial x_1^2 + \cdots + \partial^2/\partial x_N^2$. 

\begin{lemma}\label{lem3.1}
Assume that $g(u) = u^p$ with $p > 1$ or $g(u) = e^u$. 
Define $G(u)$ by $(\ref{eq3.4})$.
Let $q$ be a constant defined by  
$q = p/(p-1)$ if $g(u) = u^p$ and $q = 1$ if $g(u) = e^u$.
\begin{enumerate}
\item[{\rm (i)}] Let $u$ be a positive solution of $(\ref{eq1.1})$, and let $\lam > 0$. 
Define $v$ by 
$$
	v(y) = G^{-1}(\lam^{-2}F(u(x)) 
	\quad \mbox{with} \ 
	y = \frac{x}{\lam} \quad \mbox{for} \ x \in \R^N.
$$
Then $v(y)$ satisfies 
$$
	\Delta_y v +g(v) + 
	\frac{|\nabla_y v|^2}{g(v)G(v)}(f'(u(x))F(u(x))-q) = 0 \quad \mbox{for} \  y \in \R^N.
$$

\item[{\rm (ii)}] Let $v$ be a solution of $(\ref{eq3.1})$, and let $\lam > 0$. 
Assume that $v > 0$ in $\R^N$ if $g(u) = u^p$. 
Define $u$ by
$$
	u(x) = F^{-1}(\lam^2 G(v(y)) 
	\quad \mbox{with} \ x = \lam y \quad \mbox{for} \ y \in \R^N.
$$
Then $u(x)$ satisfies 
$$
	\Delta_x u + f(u) + \frac{|\nabla_x u|^2}{f(u)F(u)}(q - f'(u)F(u)) = 0 
	\quad \mbox{for} \ x \in \R^N.
$$
\end{enumerate}
\end{lemma}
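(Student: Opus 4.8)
The plan is to treat both parts as direct chain-rule computations organized around the single scaling relation $F(u(x)) = \lambda^2 G(v(y))$ with $x = \lambda y$, which underlies both definitions. Writing $\Phi := F(u(x)) = \lambda^2 G(v(y))$, I would first record the elementary derivative formulas $F'(u) = -1/f(u)$, $F''(u) = f'(u)/f(u)^2$ and $G'(v) = -1/g(v)$, $G''(v) = g'(v)/g(v)^2$, together with the fact that $x = \lambda y$ forces $\partial_{y_i} = \lambda \partial_{x_i}$, hence $\nabla_y = \lambda \nabla_x$ and $\Delta_y = \lambda^2 \Delta_x$ when acting on $\Phi$. Differentiating $\Phi$ once in each variable then yields the gradient identity
$$
\frac{\lambda}{g(v)}\nabla_y v = \frac{1}{f(u)}\nabla_x u,
$$
which in particular gives $|\nabla_x u|^2/f(u)^2 = \lambda^2 |\nabla_y v|^2/g(v)^2$; this is the bridge used to convert $x$-gradients into $y$-gradients (for part (i)) and conversely (for part (ii)).

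Next I would compute the Laplacian of $\Phi$ in both variables. Using $\Delta_x u = -f(u)$ (the equation for $u$) in part (i), or $\Delta_y v = -g(v)$ (the equation for $v$) in part (ii), the second-derivative expansion $\Delta F(u) = F''(u)|\nabla u|^2 + F'(u)\Delta u$ collapses to a clean expression in which the ``$+1$'' coming from $-F'(u)f(u) = 1$ appears. Equating $\Delta_y \Phi = \lambda^2 \Delta_x \Phi$ and dividing through by $\lambda^2$, I would substitute the gradient identity above together with $\lambda^2 = F(u)/G(v)$ to replace all remaining factors of $\lambda$ and all $x$-gradients, and then multiply by $-g(v)$ (resp.\ by $-f(u)$) to isolate $\Delta_y v$ (resp.\ $\Delta_x u$).

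The decisive algebraic input is the identity $g'(v)G(v) = q$, which holds for both admissible nonlinearities: for $g(v) = v^p$ one has $g'(v)G(v) = p v^{p-1}\cdot v^{1-p}/(p-1) = p/(p-1) = q$, and for $g(v) = e^v$ one has $g'(v)G(v) = e^v e^{-v} = 1 = q$. This is exactly what turns the term $g'(v)/g(v) = q/(g(v)G(v))$ into the constant $q$ appearing in the final equations, and it is the only place where the special forms of $g$ are used. After this substitution the two target equations fall out. The main obstacle is purely bookkeeping: keeping track of the powers of $\lambda$ under the change of variables $x = \lambda y$ and applying the substitutions $\lambda^2 G(v) = F(u)$ and the gradient identity in the correct direction for each part; once these are handled consistently the computation is mechanical and the identity $g'(v)G(v)=q$ does the real work.
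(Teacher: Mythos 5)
Your proposal is correct and follows essentially the same route as the paper: the paper's Lemma 3.2 differentiates the relation $F(u(x))=\lambda^2G(v(y))$ once to get exactly your gradient identity $\nabla_x u/f(u)=\lambda\,\nabla_y v/g(v)$, differentiates again to get the second-order identity (your expansion of $\Delta F(u)$ is the same computation), and the proof of Lemma 3.1 then substitutes the relevant PDE and uses $g'(v)G(v)=q$ precisely as you do. The only cosmetic difference is that the paper isolates the two identities as a separate lemma valid for general $g$ before specializing.
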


These transformations were first introduced by Fujishima--Ioku \cite{FuIoa} 
and used in the study of the existence of a solution 
for semilinear parabolic equations. 
Later, in \cite{Miy} one of the author used in the study of bifurcation problem for 
semilinear elliptic equations. 
For the reader's convenience, we give the proof of Lemm \ref{lem3.1}.
First we show the following result for a general function $g$.

\begin{lemma}\label{lem3.2}
Let $I = (0, \infty)$ or $\R$. 
Assume that $g \in C^1(I)$ satisfies 
$$
	g(u) > 0, \quad g'(u) > 0  
	\quad \mbox{and} \quad G(u) < \infty \quad \mbox{for} \ u \in I, 
	\quad \mbox{where} \quad
	G(u) = \int^{\infty}_u \frac{ds}{g(s)}.
$$
Let $u, v \in C^2(\R^N)$ satisfy $u(x) > 0$ and $v(x) \in I$ for $x \in \R^N$, 
and let $\lam > 0$. 
If $u$ and $v$ have a relation 
\begin{equation}
	F(u(x)) = \lam^{2}G(v(y)) \quad \mbox{with} \ x = \lam y 
	\quad \mbox{for} \ x, y \in \R^N,
\label{eq3.5}
\end{equation}
then $u(x)$ and $v(y)$ satisfy 
\begin{equation}
	\frac{1}{f(u)}\left(\Delta_x u + f(u) -\frac{f'(u)}{f(u)}|\nabla_x u|^2\right) 
	= 
	\frac{1}{g(v)}\left(\Delta_y v + g(v) -\frac{g'(v)}{g(v)}|\nabla_y v|^2\right) 
\label{eq3.6}
\end{equation}
and 
\begin{equation}
	\frac{|\nabla_x u|^2}{f(u(x))^2F(u(x))} = 
	\frac{|\nabla_y v|^2}{g(v(y))^2G(v(y))}. 
\label{eq3.7}
\end{equation}
\end{lemma}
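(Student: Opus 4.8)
The plan is to reduce the whole identity to a statement about the two scalar functions $\Phi(x) := F(u(x))$ and $\Psi(y) := G(v(y))$, in terms of which the hypothesis (\ref{eq3.5}) reads simply
\[
	\Phi(x) = \lam^2 \Psi(y), \qquad x = \lam y .
\]
First I would record the elementary pointwise identities coming from $F(u) = \int_u^\infty ds/f(s)$ and $G(v) = \int_v^\infty ds/g(s)$, namely
\[
	F'(u) = -\frac{1}{f(u)}, \quad F''(u) = \frac{f'(u)}{f(u)^2}, \quad
	G'(v) = -\frac{1}{g(v)}, \quad G''(v) = \frac{g'(v)}{g(v)^2},
\]
which are valid since $f, g > 0$ and $f$ is $C^2$, $g$ is $C^1$ on the relevant ranges.

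Next I would differentiate $\Phi$ and $\Psi$ by the chain rule. A direct computation gives
\[
	\nabla_x \Phi = -\frac{1}{f(u)}\nabla_x u, \qquad
	\Delta_x \Phi = -\frac{1}{f(u)}\left(\Delta_x u - \frac{f'(u)}{f(u)}|\nabla_x u|^2\right),
\]
together with the analogous formulas for $\Psi$, with $(f, u, x)$ replaced by $(g, v, y)$. Thus each side of (\ref{eq3.6}), after subtracting the trivially balanced constant term (see below), is exactly $-\Delta_x\Phi$, respectively $-\Delta_y\Psi$, while the two quantities in (\ref{eq3.7}) are, up to the factors $F(u)$ and $G(v)$, exactly $|\nabla_x\Phi|^2$ and $|\nabla_y\Psi|^2$.

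The core step is to transfer these through the scaling relation $\Phi(x) = \lam^2\Psi(x/\lam)$. Differentiating in $x$ yields $\nabla_x\Phi(x) = \lam\,(\nabla_y\Psi)(y)$ and $\Delta_x\Phi(x) = (\Delta_y\Psi)(y)$; the only place where care is needed is that the outer factor $\lam^2$ and the $\lam^{-2}$ coming from second derivatives of the rescaled argument combine so that the Laplacians agree with no leftover constant, whereas the gradients retain exactly one factor of $\lam$. The identity $\Delta_x\Phi = \Delta_y\Psi$ gives (\ref{eq3.6}) at once, once one notes that the two extra summands $f(u)/f(u) = 1$ and $g(v)/g(v) = 1$ produced by the $+f(u)$ and $+g(v)$ terms cancel against each other. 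For (\ref{eq3.7}), the identity $|\nabla_x\Phi|^2 = \lam^2|\nabla_y\Psi|^2$ reads $|\nabla_x u|^2/f(u)^2 = \lam^2 |\nabla_y v|^2/g(v)^2$; substituting $\lam^2 = F(u)/G(v)$, which is just (\ref{eq3.5}), and dividing by $F(u)$ yields (\ref{eq3.7}). No genuine obstacle arises beyond bookkeeping of the chain rule and the scaling factors, so the whole argument is a direct verification; this computation is exactly what underlies Lemma \ref{lem3.1}.
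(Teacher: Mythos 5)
Your proof is correct and is essentially the paper's own argument: differentiating the relation $F(u(x))=\lam^2G(v(y))$ once gives $\nabla_x u/f(u)=\lam\,\nabla_y v/g(v)$ (your gradient identity for $\Phi,\Psi$), differentiating again and adding $1$ gives (\ref{eq3.6}), and squaring the first-order identity and dividing by (\ref{eq3.5}) gives (\ref{eq3.7}). The repackaging in terms of $\Phi=F(u)$ and $\Psi=G(v)$ is only a notational variant of the same computation.
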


\begin{proof}[Proof of Lemma $\ref{lem3.2}$] 
Differentiating the both sides of (\ref{eq3.5}) by $x$, we have 
\begin{equation}
	\frac{\nabla_x u}{f(u)} = \lam \frac{\nabla_y v}{g(v)}.
\label{eq3.8}
\end{equation}
Differentiating the above by $x$ again, we obtain 
$$
	\frac{f(u)\Delta_x u - f'(u)|\nabla_x u|^2}{f(u)^2} 
	= 
	\frac{g(v) \Delta_y v - g'(v)|\nabla_y v|^2}{g(v)^2}.
$$
Adding the both sides by $+1$, we obtain (\ref{eq3.6}).
From (\ref{eq3.8}) and (\ref{eq3.5}) we obtain (\ref{eq3.7}).
\end{proof}

\begin{proof}[Proof of Lemma $\ref{lem3.1}$] 
By Lemma \ref{lem3.2} we have 
\begin{equation}
	\begin{array}{rcl}
	\dsp
	\frac{1}{f(u)}\left(\Delta_x u + f(u)\right) 
	& = & \dsp
	\frac{1}{g(v)}\left(\Delta_y v + g(v) -\frac{g'(v)}{g(v)}|\nabla_y v|^2\right) 
	+ \frac{f'(u)F(u)}{g(v)^2G(v)}|\nabla_y v|^2
	\\[3ex]
	& = & \dsp
	\frac{1}{g(v)}\left(\Delta_y v + g(v) -\frac{g'(v)G(v)- f'(u)F(u)}{g(v)G(v)}
	|\nabla_y v|^2\right).
	\end{array}
\label{eq3.9}
\end{equation}
By the similar argument, we obtain 
\begin{equation}
	\frac{1}{g(v)}\left(\Delta_y v + g(v)\right) = 
	\frac{1}{f(u)}\left(\Delta_x u + f(u) -\frac{f'(u)F(u)- g'(v)G(v)}{f(u)F(u)}
	|\nabla_x u|^2\right).
\label{eq3.10}
\end{equation}
Note here that $g'(v)G(v) = p/(p-1) = q$ if $g(v) = v^p$ and 
$g'(v)G(v) = 1 = q$ if $g(v) = e^u$.
From (\ref{eq3.9}) and (\ref{eq3.10}) we obtain (i) and (ii), respectively.
\end{proof}


\section{Existence of stable radial solutions}

In this section we show the existence of stable radial solutions of (\ref{eq1.1}), and 
give the proof of Theorem \ref{thm1.2}. 
First we show the following comparison result.

\begin{proposition}\label{prp4.1}
Let $\hat{v} \in C^2(0, \infty) \cap C^1[0, \infty)$ satisfy 
$\hat{v}(r) > 0$ for $r \geq 0$, $\hat{v}'(0) = 0$, and 
\begin{equation}
	\hat{v}'' + \frac{N-1}{r}\hat{v}' + f(\hat{v}) \geq 0 \quad \mbox{for} \ r > 0.
\label{eq4.1}
\end{equation}
Assume that  
\begin{equation}
	r^2 f'(\hat{v}(r)) \leq \frac{(N-2)^2}{4} 
	\quad \mbox{for} \ r > 0.
\label{eq4.2}
\end{equation}
Let $u(r, \alpha)$ be a solution of $(\ref{eq1.4})$. 
Then the following {\rm (i)} and {\rm (ii)} hold.

{\rm (i)} 
For $0 < \alpha <  \hat{v}(0)$, $r_0(\alpha) = \infty$ and 
$u(r, \alpha) < \hat{v}(r)$ for $r > 0$.

{\rm (ii)} 
If $0 < \alpha < \beta < \hat{v}(0)$, then 
$0 < u(r, \alpha) < u(r, \beta)$ for $r > 0$.
\end{proposition}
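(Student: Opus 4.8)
The plan is to compare $u(\cdot,\alpha)$ with the explicit function $\psi(r)=r^{-(N-2)/2}$, which solves the linear equation $(r^{N-1}\psi')'+r^{N-1}\frac{(N-2)^2}{4r^2}\psi=0$ carrying exactly the critical potential appearing in $(\ref{eq4.2})$. Writing $(\ref{eq1.4})$ in the divergence form $(r^{N-1}u')'+r^{N-1}f(u)=0$, the engine of the whole argument is the Wronskian-type identity: for any $z$, if $W=r^{N-1}(\psi z'-z\psi')$ then $W'=\psi(r^{N-1}z')'-z(r^{N-1}\psi')'=\psi(r^{N-1}z')'+r^{N-1}\frac{(N-2)^2}{4r^2}z\psi$. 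I will apply this twice, once to $z=\hat v-u$ and once to $z=u$, each time checking that $\lim_{r\to0^+}W=0$ (which follows from $z'(r)=O(r)$, $z(0)$ finite, and $N\geq3$) and that $W'>0$, so that $W$ stays strictly positive and the induced sign contradicts the hypothesized touching.

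First I would prove (i). Fix $0<\alpha<\hat v(0)$ and set $w=\hat v-u(\cdot,\alpha)$, so $w(0)>0$ and $w'(0)=0$. On $[0,r_0(\alpha))$ one has $u>0$, and I claim $u<\hat v$ there. If not, let $r_1\in(0,r_0(\alpha))$ be the first zero of $w$, so $0<u<\hat v$ on $[0,r_1)$, $w(r_1)=0$, $w'(r_1)\leq0$. Subtracting the equations and using $(\ref{eq4.1})$ gives $(r^{N-1}w')'+r^{N-1}c(r)w\geq0$ with $c(r)=(f(\hat v)-f(u))/w=f'(\theta)$, $\theta\in(u,\hat v)$, by the mean value theorem; since $f'$ is strictly increasing, $c(r)<f'(\hat v)\leq(N-2)^2/(4r^2)$ by $(\ref{eq4.2})$. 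The identity above then yields $W'\geq r^{N-1}(\frac{(N-2)^2}{4r^2}-c(r))w\psi>0$ on $(0,r_1)$, so $W>0$ there and hence $W(r_1)>0$; but $w(r_1)=0$, $w'(r_1)\leq0$ force $W(r_1)=r_1^{N-1}\psi(r_1)w'(r_1)\leq0$, a contradiction. Thus $u<\hat v$ on $[0,r_0(\alpha))$.

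The step I expect to be the real obstacle is upgrading this to $r_0(\alpha)=\infty$, because $u<\hat v$ alone does not keep $u$ from decreasing to $0$. I would overcome it by the second application of the identity, now to $z=u$ itself. Suppose $r_0(\alpha)<\infty$; then $u>0$ on $[0,r_0(\alpha))$, $u(r_0(\alpha))=0$, and $u'(r_0(\alpha))<0$ (the derivative cannot also vanish, by uniqueness for $(\ref{eq1.4})$). Writing the equation as $(r^{N-1}u')'+r^{N-1}\frac{f(u)}{u}u=0$ and using convexity together with $u<\hat v$ gives the coefficient bound $\frac{f(u)}{u}<f'(u)<f'(\hat v)\leq(N-2)^2/(4r^2)$. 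The same computation produces $\tilde W=r^{N-1}(\psi u'-u\psi')$ with $\tilde W'>0$ and $\lim_{r\to0^+}\tilde W=0$, so $\tilde W(r_0(\alpha))>0$; yet $u(r_0(\alpha))=0$ and $u'(r_0(\alpha))<0$ give $\tilde W(r_0(\alpha))=r_0(\alpha)^{N-1}\psi(r_0(\alpha))u'(r_0(\alpha))<0$, a contradiction. Hence $r_0(\alpha)=\infty$, finishing (i).

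Finally, (ii) follows from (i) with no new work. Given $0<\alpha<\beta<\hat v(0)$, part (i) shows $u(\cdot,\beta)$ is positive on all of $[0,\infty)$ and satisfies $u(\cdot,\beta)<\hat v$; since $f'$ is increasing, $r^2f'(u(r,\beta))<r^2f'(\hat v(r))\leq(N-2)^2/4$, so $u(\cdot,\beta)$ meets every hypothesis imposed on $\hat v$ (it is a positive solution, hence supersolution, of $(\ref{eq1.4})$ with value $\beta$ at the origin, and it obeys $(\ref{eq4.2})$). Applying (i) with $\hat v$ replaced by $u(\cdot,\beta)$ and with $\alpha<\beta=u(0,\beta)$ gives $u(\cdot,\alpha)<u(\cdot,\beta)$ on $(0,\infty)$, which is exactly (ii).
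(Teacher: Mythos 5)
Your proof is correct, and while the core device is the same as the paper's (Sturm comparison against $\psi(r)=r^{-(N-2)/2}$, which is exactly the paper's Lemma~\ref{lem4.2}, and the same first step establishing $u<\hat v$ up to $r_0(\alpha)$), you handle the two remaining steps by a genuinely different route. For $r_0(\alpha)=\infty$ the paper argues indirectly: it picks an auxiliary initial value $\beta\in(\alpha,\hat v(0))$, invokes Lemma~\ref{lem2.1} to produce an intersection of $u(\cdot,\beta)$ and $u(\cdot,\alpha)$ inside $(0,r_0(\alpha))$, and then excludes that intersection by the same Wronskian comparison applied to $u(\cdot,\beta)-u(\cdot,\alpha)$. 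You instead apply the comparison directly to $u$ itself, writing the equation with potential $f(u)/u$ and using convexity ($f(0)=0$ and $f''>0$ give $f(u)/u<f'(u)<f'(\hat v)\leq (N-2)^2/(4r^2)$) together with the boundary computation $\tilde W(r_0(\alpha))=r_0(\alpha)^{N-1}\psi(r_0(\alpha))u'(r_0(\alpha))<0$; the limit $\tilde W\to 0$ at the origin is fine since $r^{N-1}u\psi'=O(r^{N/2-1})$ and $N\geq 3$. This is more self-contained (no appeal to the intersection lemma) and slightly sharper in spirit, since it shows directly that $u$ cannot reach zero under the critical Hardy bound. For (ii) you bootstrap part (i) by verifying that $u(\cdot,\beta)$ itself satisfies all hypotheses imposed on $\hat v$ (positivity and $(\ref{eq4.2})$ follow from (i) and monotonicity of $f'$), whereas the paper repeats the Wronskian argument for the difference $u(\cdot,\beta)-u(\cdot,\alpha)$; your reduction is cleaner and buys economy, while the paper's version keeps all three claims on the same explicit footing. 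No gaps.
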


To prove Proposition \ref{prp4.1}, we need the following lemma.

\begin{lemma}\label{lem4.2}
Let $w \in C^2(0, R]\cap C^1[0, R]$ satisfy 
\begin{equation}
	w'' + \frac{N-1}{r}w' + \frac{(N-2)^2}{4r^2}w > 0 
	\quad \mbox{for} \ 0 < r < R
\label{eq4.3}
\end{equation}
with some $R > 0$. 
Assume that $w'(0) = 0$ and $w(0) > 0$. 
Then $w(r) > 0$ for $0 \leq r \leq R$.
\end{lemma}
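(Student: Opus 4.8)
The plan is to exploit the fact that the constant $(N-2)^2/4$ in $(\ref{eq4.3})$ is precisely the critical Hardy constant, for which the associated Euler-type operator degenerates. Looking for solutions of the form $r^m$ of the homogeneous equation $w'' + \frac{N-1}{r}w' + \frac{(N-2)^2}{4r^2}w = 0$ leads to the indicial equation $\bigl(m + \frac{N-2}{2}\bigr)^2 = 0$, so that $m = -\frac{N-2}{2}$ is a \emph{double} root. This strongly suggests the substitution $z(r) = r^{(N-2)/2}w(r)$, i.e. $w = r^{-\mu}z$ with $\mu = (N-2)/2$, which should linearize the problem into a first-order monotonicity statement.

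First I would carry out this substitution. A direct computation shows that the first-order term and the zeroth-order (Hardy) term conspire to cancel, reducing the left-hand side of $(\ref{eq4.3})$ to $r^{-\mu-1}(rz')'$; more precisely one finds $w'' + \frac{N-1}{r}w' + \frac{(N-2)^2}{4r^2}w = r^{-\mu-1}(rz')'$. Hence the hypothesis $(\ref{eq4.3})$ is equivalent to $(rz')' > 0$ on $(0,R)$, so that $rz'$ is strictly increasing there.

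Next I would analyze the behavior at the origin. Since $\mu > 0$ for $N \geq 3$ and $w \in C^1[0,R]$ with $w(0)$ finite, one gets $z(r) = r^{\mu}w(r) \to 0$, so $z(0) = 0$. Writing $rz'(r) = \mu r^{\mu}w(r) + r^{\mu+1}w'(r)$ and using $w'(0) = 0$, both terms vanish as $r \to 0^+$, giving $\lim_{r \to 0^+} rz'(r) = 0$. Combined with the strict monotonicity of $rz'$, this forces $rz'(r) > 0$ on $(0,R)$, hence $z'(r) > 0$ there; integrating from $0$ and using $z(0) = 0$ yields $z(r) > 0$ on $(0,R]$, and therefore $w(r) = r^{-\mu}z(r) > 0$ on $(0,R]$. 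Together with the assumption $w(0) > 0$, this gives $w > 0$ on $[0,R]$.

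The routine part is the substitution computation; the only genuinely delicate step is justifying the boundary behavior $z(0) = 0$ and $rz'(r) \to 0$ at the singular endpoint $r = 0$, where the coefficients blow up and a direct maximum principle is unavailable. The hypotheses $w \in C^1[0,R]$, $w(0) > 0$ and $w'(0) = 0$ are exactly what is needed to control these limits, and I expect this to be the main point requiring careful verification.
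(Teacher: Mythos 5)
Your proof is correct and is in substance the same as the paper's: your quantity $rz'(r)=r^{N-1}\bigl(w'(r)z_0(r)-z_0'(r)w(r)\bigr)$ with $z_0(r)=r^{-(N-2)/2}$ is exactly the Wronskian $\Phi$ that the paper shows to be strictly increasing and to vanish at the origin, using the same asymptotics from $w\in C^1[0,R]$ and $w'(0)=0$. The only cosmetic difference is that the paper runs the argument by contradiction at a hypothetical first zero of $w$, whereas you integrate directly to conclude $z>0$ on $(0,R]$.
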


\begin{proof}
Assume by contradiction that there exists $R_1 \leq R$ such that   
\begin{equation}
	w(r) > 0 \quad \mbox{for} \ 0 \leq r < R_1 \quad 
	\mbox{and} \quad w(R_1) = 0.
\label{eq4.4}
\end{equation}
Define $z(r) = r^{-(N-2)/2}$ for $r > 0$. 
Then $z$ satisfies 
\begin{equation}
	z'' + \frac{N-1}{r}z' + \frac{(N-2)^2}{4r^2}z = 0 
	\quad \mbox{for} \ r > 0.
\label{eq4.5}
\end{equation}
Multiplying (\ref{eq4.3}) and (\ref{eq4.5}) by $z$ and $w$, respectively, we obtain 
\begin{equation}
	\bigl(r^{N-1}(w'(r)z(r) - z'(r)w(r))\bigr)' > 0 
	\quad \mbox{for} \  0 < r < R_1.
\label{eq4.6}
\end{equation}
Define $\Phi(r) = r^{N-1}(w'(r)z(r) - z'(r)w(r))$.
We have    
$$
	r^{N-1}w'(r)z(r) = o(r^{\frac{N}{2}}) \quad 
	\mbox{and} \quad 
	r^{N-1}z'(r)w(r) = O(r^{\frac{N}{2}-1}) \quad 
	\mbox{as} \ r \to 0.
$$
Then $\lim_{r \to 0}\Phi(r) = 0$.
Since $\Phi(r)$ is strictly increasing in $r \in (0, R_1)$ from (\ref{eq4.6}),  
we obtain $\Phi(R_1) = R_1^{N-1}w'(R_1)z(R_1) > 0$. 
This implies that $w'(R_1) > 0$.  
On the other hand, from (\ref{eq4.4}) we have $w'(R_1) \leq 0$, 
which is a contradiction. 
Thus we obtain $w(r) > 0$ for $0 \leq r \leq R$.
\end{proof}

\begin{proof}[Proof of Proposition $\ref{prp4.1}$] 
Let $0 < \alpha < \hat{v}(0)$. 
First we will show that 
\begin{equation}
	\hat{v}(r) > u(r, \alpha) \quad \mbox{for} \ 0 \leq r \leq r_0(\alpha).
\label{eq4.7}
\end{equation}
Put $w(r) = \hat{v}(r) - u(r, \alpha)$ for $0 \leq r \leq r_0(\alpha)$. 
Assume by contradiction that there exists $r_1 \in (0, r_0(\alpha)]$ such that 
\begin{equation}
	w(r) > 0 \quad \mbox{for} \ 0 < r < r_1 \quad 
	\mbox{and} \quad w(r_1) = 0.
\label{eq4.8}
\end{equation}
Since $\hat{v}$ satisfies (\ref{eq4.1}), we have 
$$
	w'' + \frac{N-1}{r}w' + V_1(r)w \geq 0
	\quad \mbox{for} \ 0 < r \leq r_1,
$$
where 
$$
	V_1(r) = \frac{f(\hat{v}(r))-f(u(r, \alpha))}
	{\hat{v}(r)-u(r, \alpha)} 
	\quad \mbox{for} \ 0 < r < r_1 
	\quad \mbox{and} \quad 
	V_1(r_1) = f'(\hat{v}(r_1)).
$$
Since $\hat{v}(r) > u(r, \alpha)  > 0$ for $0 < r < r_1$ and  
$f'(u)$ is strictly increasing for $u > 0$, we have 
$$
	V_1(r) < f'(\hat{v}(r)) \leq \frac{(N-2)^2}{4r^2} 
	\quad \mbox{for} \ 0 < r \leq r_1.
$$
Then $w$ satisfies 
$$
	w'' + \frac{N-1}{r}w' + \frac{(N-2)^2}{4r^2}w > 0 
	\quad \mbox{for} \ 0 < r < r_1.
$$
Then, by Lemma \ref{lem4.2} we obtain $w(r) > 0$ for $0 \leq r \leq r_1$. 
This contradicts (\ref{eq4.8}). Thus we obtain (\ref{eq4.7}).

Next we will show that $r_0(\alpha) = \infty$. 
Assume by contradiction that $r_0(\alpha) < \infty$. 
Let $\beta \in (\alpha, \hat{v}(0))$. 
Then, by the argument above, we obtain 
$u(r, \beta) < \hat{v}(r)$ for $0 \leq r \leq r_0(\beta)$. 
By Lemma \ref{lem2.1}, $u(r, \beta)- u(r, \alpha)$ has at least one zero in $(0, r_0(\alpha))$. 
Put $w(r) = u(r, \beta)- u(r, \alpha)$. 
Then there exists $r_1 \in (0, r_0(\alpha))$ such that 
\begin{equation}
	w(r) > 0 \quad \mbox{for} \ 0 \leq r < r_1 \quad 
	\mbox{and} \quad w(r_1) = 0.
\label{eq4.9}
\end{equation}
We see that $w$ satisfies 
\begin{equation}
	w'' + \frac{N-1}{r}w' + V_2(r) w = 0
	\quad \mbox{for} \ 0 < r \leq r_1,
\label{eq4.10}
\end{equation}
where 
\begin{equation}
	V_2(r) = \frac{f(u(r, \beta))-f(u(r, \alpha))}
	{u(r, \beta)-u(r, \alpha)} 
	\quad \mbox{for} \ 0 < r < r_1 
	\quad \mbox{and} \quad 
	V_2(r_1) = f'(u(r_1, \alpha)).
\label{eq4.11}
\end{equation}
We see that 
\begin{equation}
	0 < u(r, \alpha) \leq u(r, \beta) < \hat{v}(r) \quad \mbox{for} \ 0 < r \leq r_1.
\label{eq4.12}
\end{equation}
Since $f'(u)$ is strictly increasing for $u > 0$, we have 
\begin{equation}
	V_2(r) \leq f'(u(r, \beta)) < f'(\hat{v}(r))
	\leq \frac{(N-2)^2}{4r^2} 
	\quad \mbox{for} \ 0 < r \leq r_1.
\label{eq4.13}
\end{equation}
Then $w$ satisfies 
\begin{equation}
	w'' + \frac{N-1}{r}w' + \frac{(N-2)^2}{4r^2}w > 0 
	\quad \mbox{for} \ 0 < r < r_1.
\label{eq4.14}
\end{equation}
By Lemma \ref{lem4.2} we obtain $w(r_1) > 0$, which contradicts (\ref{eq4.9}). 
Thus we obtain $r_0(\alpha) = \infty$, and hence (i) holds.

To show (ii), put $w(r) = u(r, \beta) - u(r, \alpha)$ for $r \geq 0$.
We will show that $w(r) > 0$ for all $r \geq 0$. 
Assume by contradiction that, there exists $r_1 > 0$ such that (\ref{eq4.9}) holds.
Then $w$ satisfies (\ref{eq4.10}) with (\ref{eq4.11}). 
By (i), we have $u(r, \beta) < \hat{v}(r)$ for $r \geq 0$. 
By the similar argument to above, we obtain (\ref{eq4.12}), (\ref{eq4.13}) and (\ref{eq4.14}).
By Lemma 4.2 we obtain $w(r_1) > 0$, which contradicts (\ref{eq4.9}). 
Thus we obtain $w(r) > 0$ for all $r \geq 0$, hence (ii) holds.
\end{proof}

Let $q_1 \geq 1$ be the constant in Theorem \ref{thm1.2}.  
Define $g$ and $I \subset \R$ as 
$$
	\left\{
	\begin{array}{ll}
	g(u) = u^p \quad \mbox{with} \quad p = \frac{q_1}{q_1-1} 
	\quad \mbox{and} \quad I= (0, \infty) 
	& \mbox{if} \ q_1 > 1,
	\\[1ex]
	g(u) = e^u \quad \mbox{and} \quad I = \R 
	& \mbox{if} \ q_1 = 1.
	\end{array}
	\right.
$$ 
For $\alpha \in I$, denote by $w(r, \alpha)$ a solution of 
\begin{equation}
	\left\{
	\begin{array}{c}
	\dsp
	w'' + \frac{N-1}{r}w' + g(w) = 0, \quad r > 0,
	\\[2ex]
	w(0) = \alpha, \quad w'(0) = 0.
	\end{array}
	\right.
\label{eq4.15}
\end{equation}
We recall some properties of the solution $w(r, \alpha)$ from \cite{Wan, Tel}.

\begin{lemma}\label{lem4.3}
(\cite[Proposition 3.7]{Wan}, \cite[Theorem 1.1]{Tel}) 
\begin{enumerate}
\item[{\rm (i)}] 
Assume that $q_1 > 1$. 
If $N \geq 11$ and $q_1 \leq q_{JL}$, i.e., $p \geq p_{JL}$, then 
$$
	w(r, \alpha) < W(r) :=  Lr^{-2/(p-1)} \quad \mbox{for} \ r > 0,
$$ 
where
$$
	L = \left\{\frac{2}{p-1}\left(
	N-2-\frac{2}{p-1}\right)\right\}^{1/(p-1)}.
$$
\item[{\rm (ii)}]  
Assume that $q_1 = 1$. If $N \geq 10$ then 
$$
	w(r, \alpha) < Z(r) := -2\log r + \log(2N-4) \quad \mbox{for} \ r > 0.
$$
\end{enumerate}
\end{lemma}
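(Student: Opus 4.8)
The plan is to prove both (i) and (ii) by a single Sturm-type comparison of the same kind used in Lemma \ref{lem4.2}, but now comparing the regular solution $w(r,\alpha)$ with the \emph{explicit} singular solution ($W$ in (i), $Z$ in (ii)) instead of with a supersolution that is bounded at the origin. The underlying point is that $W(r)=Lr^{-2/(p-1)}$ and $Z(r)=-2\log r+\log(2N-4)$ are exact singular solutions of $\Delta w+g(w)=0$, and that $r^2g'(\cdot)$ evaluated along them is constant.

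First I would record the identity that makes the dimension hypotheses usable. Writing $m=2/(p-1)$, a direct computation gives $r^2g'(W(r))=pL^{p-1}=(m+2)(N-2-m)$ in the power case and $r^2g'(Z(r))=e^{Z}r^2=2N-4$ in the exponential case. The hypotheses then turn out to be \emph{exactly} the Hardy-type bound required by Proposition \ref{prp4.1}: one checks that $(m+2)(N-2-m)\le (N-2)^2/4$ holds iff $m\le (N-4-2\sqrt{N-1})/2=2/(p_{JL}-1)$, i.e.\ iff $p\ge p_{JL}$ (using $N\ge 11$), while $2N-4\le (N-2)^2/4$ holds iff $N\ge 10$. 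Hence in both cases the singular solution satisfies $r^2g'(\text{singular solution})\le (N-2)^2/4$ for all $r>0$.

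Fixing $\alpha$, I would set $\psi=W-w(\cdot,\alpha)$ (resp.\ $\psi=Z-w$). Since $w\to\alpha$ is bounded while the singular solution blows up at $0$, we have $\psi(r)\to+\infty$, so $\psi>0$ near the origin; this holds for every $\alpha$, so no scaling reduction is needed. Suppose for contradiction that $\psi$ vanishes on the interval of positivity of $w$, and let $r_1$ be the first zero, so $\psi>0$ on $(0,r_1)$, $\psi(r_1)=0$ and $\psi'(r_1)\le 0$. Subtracting the two equations and using the mean value theorem, $\psi$ solves $\psi''+\frac{N-1}{r}\psi'+V\psi=0$ with $V(r)=g'(\xi(r))$ for some $\xi(r)\in(w,W)$; since $g'$ is increasing and $0<w<W$ on $(0,r_1)$, we get $0<V(r)<g'(W(r))\le (N-2)^2/(4r^2)$, and therefore $\psi''+\frac{N-1}{r}\psi'+\frac{(N-2)^2}{4r^2}\psi>0$ on $(0,r_1)$, i.e.\ $\psi$ satisfies $(\ref{eq4.3})$.

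I would then run the Wronskian argument of Lemma \ref{lem4.2} verbatim: with $z(r)=r^{-(N-2)/2}$ the function $\Phi(r)=r^{N-1}(\psi'z-z'\psi)$ is strictly increasing on $(0,r_1)$. The one genuinely new step, which I expect to be the main obstacle, is the boundary behavior at $r=0$, since now $\psi\to\infty$ rather than being finite and positive. Using $\psi\sim W$, $\psi'\sim W'$ as $r\to 0$ together with the explicit forms of $W$ and $Z$, both $r^{N-1}\psi'z$ and $r^{N-1}z'\psi$ tend to $0$: they are of order $r^{(N-2-2m)/2}$ in the power case and of order $r^{(N-2)/2}\log(1/r)$ in the exponential case, and the exponents are positive since $m<(N-2)/2$ throughout the Joseph--Lundgren regime (power) and $N>2$ (exponential). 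Monotonicity then forces $\Phi(r_1)>0$, whereas $\psi(r_1)=0$ gives $\Phi(r_1)=r_1^{N-1}\psi'(r_1)z(r_1)$, so $\psi'(r_1)>0$, contradicting $\psi'(r_1)\le0$. Hence $w(\cdot,\alpha)<W$ on its interval of positivity; combined with the (standard) global existence and positivity of the supercritical regular solution, this yields the stated strict inequality for all $r>0$. Apart from this boundary estimate, the only point needing care is the algebraic equivalence between the dimension/exponent hypotheses and the bound $r^2g'(\cdot)\le(N-2)^2/4$; everything else is the mechanism of Lemma \ref{lem4.2}.
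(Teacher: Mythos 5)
The paper does not prove Lemma \ref{lem4.3} at all: it is quoted from \cite[Proposition 3.7]{Wan} and \cite[Theorem 1.1]{Tel}, so there is no in-paper argument to compare against. Your self-contained proof is essentially correct and is the classical route: $W$ and $Z$ are exact singular solutions, the identities $r^2g'(W(r))=pL^{p-1}=(m+2)(N-2-m)$ and $r^2g'(Z(r))=2N-4$ are right, and the algebra reducing $p\ge p_{JL}$ (resp.\ $N\ge 10$) to $r^2g'(\cdot)\le (N-2)^2/4$ checks out (the roots of $(m+2)(N-2-m)=(N-2)^2/4$ are $m=\tfrac{N-4\pm 2\sqrt{N-1}}{2}$, and $m=2/(p-1)\le \tfrac{N-4-2\sqrt{N-1}}{2}$ is exactly $p\ge p_{JL}$, with $N\ge 11$ ensuring this root is positive). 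The new boundary analysis at $r=0$ is also correct: since $m<(N-2)/2$ throughout the Joseph--Lundgren range, both terms of $\Phi$ are $O(r^{(N-2-2m)/2})$ (resp.\ $O(r^{(N-2)/2}\log(1/r))$), so $\Phi\to 0$, and the Wronskian monotonicity then forces $\psi'(r_1)>0$ at a putative first zero, the desired contradiction. This meshes naturally with the paper's own Lemma \ref{lem4.2} and Proposition \ref{prp4.1}, which use the same Hardy-type threshold $(N-2)^2/(4r^2)$; in effect you have re-derived the cited input with the paper's own tools.

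Two small caveats. First, in the power case your comparison only controls $w$ on its interval of positivity; the global positivity (equivalently, that $w(\cdot,\alpha)$ never reaches zero when $p\ge p_{JL}>p_S$) is a separate classical fact that you correctly flag but do not prove --- it is part of the content of Wang's Proposition 3.7, so your argument is a proof of the separation statement modulo that input. Second, when $p=p_{JL}$ the bound $r^2g'(W(r))\le (N-2)^2/4$ is an equality, so the strict inequality in \eqref{eq4.3} for $\psi$ comes only from the strict monotonicity of $g'$ together with $\xi<W$ on $(0,r_1)$; you use this implicitly and it is fine, but it is worth saying explicitly that this is where strictness survives in the borderline case.
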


Define  $G$ by $(\ref{eq3.4})$. 
 From Lemma \ref{lem4.3}, we obtain the following.

\begin{lemma}\label{lem4.4}
Let $N \geq 11$, and assume that $1 \leq q_1 \leq q_{JL}$.
Let $w(r, \alpha)$ be a solution of $(\ref{eq4.15})$ with $\alpha \in I$.  
Then $G(w(r, \alpha))$ satisfies
\begin{equation}
	G(w(r, \alpha)) > \frac{r^2}{2N-4q_1}
	\quad \mbox{for} \ r > 0.
\label{eq4.16}
\end{equation}
\end{lemma}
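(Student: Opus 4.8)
The plan is to combine the explicit pointwise upper bounds for $w(r,\alpha)$ recorded in Lemma \ref{lem4.3} with the monotonicity of $G$. Since $g>0$ on $I$, the primitive $G(u)=\int_u^\infty ds/g(s)$ is strictly decreasing; hence any bound of the form $w(r,\alpha)<W(r)$ (when $q_1>1$) or $w(r,\alpha)<Z(r)$ (when $q_1=1$) immediately yields $G(w(r,\alpha))>G(W(r))$, respectively $G(w(r,\alpha))>G(Z(r))$. It therefore suffices to evaluate $G$ at the comparison functions and to check that the result is exactly $r^2/(2N-4q_1)$.

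First I would treat the case $q_1>1$, so that $g(u)=u^p$ with $p=q_1/(q_1-1)$ and, by (\ref{eq3.4}), $G(u)=u^{1-p}/(p-1)$. Substituting $W(r)=Lr^{-2/(p-1)}$ and using $-2(1-p)/(p-1)=2$ collapses the power of $r$ to $2$, giving $G(W(r))=L^{1-p}r^2/(p-1)$. Inserting the constant $L$ from Lemma \ref{lem4.3}(i) one finds $L^{1-p}=\{\frac{2}{p-1}(N-2-\frac{2}{p-1})\}^{-1}$, hence $G(W(r))=r^2/(2(N-2-\frac{2}{p-1}))$. The computation then closes with the identity $\frac{1}{p-1}=q_1-1$, equivalently $\frac{2}{p-1}=2q_1-2$, which reduces $N-2-\frac{2}{p-1}$ to $N-2q_1$ and so gives $G(W(r))=r^2/(2N-4q_1)$ exactly.

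Next I would dispose of the borderline case $q_1=1$, where $g(u)=e^u$ and $G(u)=e^{-u}$. Here $Z(r)=-2\log r+\log(2N-4)$ gives directly $G(Z(r))=e^{-Z(r)}=r^2/(2N-4)$, which coincides with $r^2/(2N-4q_1)$ since $q_1=1$. In both cases the strict monotonicity of $G$ upgrades the strict inequalities $w<W$ and $w<Z$ supplied by Lemma \ref{lem4.3} to the desired strict inequality (\ref{eq4.16}).

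I expect no genuine obstacle in this argument: essentially all the content is carried by Lemma \ref{lem4.3}, into which the hypothesis $q_1\le q_{JL}$ (equivalently $p\ge p_{JL}$) enters in order to guarantee the validity of the singular upper bounds. The only point requiring care is the exponent and constant bookkeeping in the case $q_1>1$, where one must verify that $G(W(r))$ equals $r^2/(2N-4q_1)$ \emph{on the nose} rather than merely up to an inequality; this is precisely the reason the specific value of $L$ is chosen as in Lemma \ref{lem4.3}(i).
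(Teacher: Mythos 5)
Your proposal is correct and follows essentially the same route as the paper's proof: both rely on the pointwise bounds of Lemma \ref{lem4.3}, the strict decrease of $G$, and the explicit evaluation $G(W(r))=r^2/(2(N-2-\frac{2}{p-1}))=r^2/(2N-4q_1)$ (respectively $G(Z(r))=r^2/(2N-4)$ when $q_1=1$). Your constant bookkeeping via $\frac{2}{p-1}=2q_1-2$ is exactly the identity the paper uses implicitly.
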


\begin{proof}
In the case $q_1 > 1$, by Lemma \ref{lem4.3} (i) we have 
$$
	G(w(r, \alpha)) > G(W(r)) = \frac{r^2}{2(N-2-\frac{2}{p-1})} = 
	\frac{r^2}{2N-4q_1}
	 \quad \mbox{for} \ r > 0.
$$
In the case $q_1 = 1$, by Lemma \ref{lem4.3} (ii) we have 
$$
	G(w(r, \alpha)) > G(Z(r)) = \frac{r^2}{2N-4q_1}
	\quad \mbox{for} \ r > 0.
$$
Thus (\ref{eq4.16}) holds in the both cases.
\end{proof}

Let $N \geq 11$, and let $1 \leq q_1 \leq q_{JL}$.
For a solution $w(r, \alpha)$ of (\ref{eq4.15}) with $\alpha \in I$, define 
\begin{equation}
	\hat{v}(r) = F^{-1}(G(w(r, \alpha))) \quad \mbox{for} \ r \geq 0.
\label{eq4.17}
\end{equation}
Then it is easy to see that 
$\hat{v} \in C^2(0, \infty) \cap C^1[0, \infty)$ and $\hat{v}$ satisfies 
$\hat{v}(r) > 0$ for $r \geq 0$, $\hat{v}'(0) = 0$. 
Furthermore, we have the following lemma.

\begin{lemma}\label{lem4.5}
Assume that $\alpha \in I$ satisfies $F^{-1}(G(\alpha)) < \ell$, 
where $\ell \in (0, \infty]$ is the constant in Theorem $1.2$. 
Then $\hat{v}$, defined by $(\ref{eq4.17})$, satisfies $(\ref{eq4.1})$ and $(\ref{eq4.2})$ 
in Proposition $\ref{prp4.1}$.
\end{lemma}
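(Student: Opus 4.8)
The plan is to recognize that $\hat v$ is nothing but the image of $w(\cdot, \alpha)$ under the generalized scaling transformation of Lemma \ref{lem3.1}(ii). Taking $\lambda = 1$ and $v = w(\cdot, \alpha)$ there, the function $F^{-1}(G(w(\cdot, \alpha)))$ coincides with $\hat v$ defined by (\ref{eq4.17}), and the associated constant is $q = p/(p-1) = q_1$ when $g(u) = u^p$ with $p = q_1/(q_1 - 1)$, and $q = 1 = q_1$ when $g(u) = e^u$ (so $q_1 = 1$); hence $q = q_1$ in both cases. Thus Lemma \ref{lem3.1}(ii) tells us that $\hat v$ solves
$$
	\hat v'' + \frac{N-1}{r}\hat v' + f(\hat v) + \frac{(\hat v')^2}{f(\hat v)F(\hat v)}\bigl(q_1 - f'(\hat v)F(\hat v)\bigr) = 0 \quad \mbox{for } r > 0.
$$
Before using this I would record the crucial location of $\hat v$: since $g > 0$ forces $w(\cdot,\alpha)$ to be strictly decreasing in $r$, and since $G$ and $F^{-1}$ are both strictly decreasing, the composition $\hat v = F^{-1}(G(w))$ is strictly decreasing. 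Consequently $0 < \hat v(r) \le \hat v(0) = F^{-1}(G(\alpha)) < \ell$ for all $r \ge 0$ by the standing hypothesis on $\alpha$, which confines $\hat v(r)$ to the interval $(0, \ell)$ where the two-sided bound (\ref{eq1.7}) is available.

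To verify (\ref{eq4.1}), I would rewrite the displayed equation for $\hat v$ as
$$
	\hat v'' + \frac{N-1}{r}\hat v' + f(\hat v) = \frac{(\hat v')^2}{f(\hat v)F(\hat v)}\bigl(f'(\hat v)F(\hat v) - q_1\bigr).
$$
The prefactor $(\hat v')^2/(f(\hat v)F(\hat v))$ is nonnegative because $f, F > 0$, and the lower bound in (\ref{eq1.7}) applied at the point $\hat v(r) \in (0, \ell)$ gives $f'(\hat v(r))F(\hat v(r)) \ge q_1$. Hence the right-hand side is nonnegative for every $r > 0$, which is exactly (\ref{eq4.1}).

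To verify (\ref{eq4.2}), I would start from the identity $F(\hat v(r)) = G(w(r, \alpha))$ built into the definition (\ref{eq4.17}), together with the lower bound of Lemma \ref{lem4.4}, which yields $F(\hat v(r)) > r^2/(2N - 4q_1)$; note $2N - 4q_1 > 0$ since $q_1 \le q_{JL}$. Rearranging gives $r^2 < (2N - 4q_1)F(\hat v(r))$, so that
$$
	r^2 f'(\hat v(r)) < (2N - 4q_1)\,f'(\hat v(r))F(\hat v(r)) \le (2N - 4q_1)\,q_2,
$$
where the last step uses the upper bound in (\ref{eq1.7}) at $\hat v(r) \in (0, \ell)$. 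Finally, condition (\ref{eq1.8}) bounds the right-hand side by $(N-2)^2/4$, giving (\ref{eq4.2}).

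The argument is essentially an assembly of Lemmas \ref{lem3.1} and \ref{lem4.4}, so no single step is genuinely hard. The only points requiring care are confirming that the transformation constant equals $q_1$ and that $\hat v(r)$ never leaves $(0, \ell)$, so that the pointwise bounds (\ref{eq1.7}) legitimately apply at each $r$. These are precisely the places where the monotonicity of $w$, $G$, and $F^{-1}$, together with the hypothesis $F^{-1}(G(\alpha)) < \ell$, are used.
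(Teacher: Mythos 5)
Your proposal is correct and follows essentially the same route as the paper: apply Lemma \ref{lem3.1}(ii) with $\lambda=1$ to get the perturbed equation with constant $q_1$, confine $\hat v(r)$ to $(0,\ell)$ so that (\ref{eq1.7}) applies pointwise (the paper does this via $w(r,\alpha)\le\alpha$ and the monotonicity of $F^{-1}\circ G$, you via monotonicity of $\hat v$ in $r$ — same substance), then read off (\ref{eq4.1}) from the sign of the extra term and (\ref{eq4.2}) from Lemma \ref{lem4.4} combined with (\ref{eq1.8}). No gaps.
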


\begin{proof}
By Lemma \ref{lem3.1} (ii) with $\lam = 1$, $\hat{v}(|x|)$ satisfies 
\begin{equation}
	\Delta \hat{v} + f(\hat{v}) 
	+ \frac{|\nabla \hat{v}|^2}{f(\hat{v})F(\hat{v})}(q_1 - f'(\hat{v})F(\hat{v})) = 0.
\label{eq4.18}
\end{equation}
Note that $F^{-1}(G(w))$ is increasing in $w > 0$. 
Since $w(r, \alpha) \leq \alpha$ for $r \geq 0$, we obtain 
$$
	0 < \hat{v}(r) \leq F^{-1}(G(w(0, \alpha))) = F^{-1}(G(\alpha)) < \ell
	\quad \mbox{for} \ r \geq 0.
$$
Then, from (\ref{eq1.7}), we have 
\begin{equation}
	q_1 \leq f'(\hat{v}(r))F(\hat{v}(r)) \leq q_2 \quad \mbox{for} \ r \geq 0.
\label{eq4.19}
\end{equation}
Since $\hat{v}$ satisfies (\ref{eq4.18}), we have (\ref{eq4.1}). 
By Lemma \ref{lem4.4} we have (\ref{eq4.16}). 
From (\ref{eq4.17}), (\ref{eq4.19}) and (\ref{eq1.8}) we obtain 
$$
	f'(\hat{v}(r)) \leq \frac{q_2}{F(\hat{v}(r))} = \frac{q_2}{G(w(r, \alpha))} 
	< \frac{q_2(2N-4q_1)}{r^2} 
	\leq \frac{(N-2)^2}{4r^2}
	\quad \mbox{for} \ r > 0.
$$
Thus $\hat{v}$ satisfies (\ref{eq4.2}).
\end{proof}

We are now in a position to prove Theorem \ref{thm1.2}.

\begin{proof}[Proof of Theorem $\ref{thm1.2}$] 
It suffices to show that $u(r, \beta)$ is stable for any $0 < \beta < \ell$. 
Take any $\beta \in (0, \ell)$. 
Note that $F^{-1}(G(\alpha)) \to \infty$ as $\alpha \to \infty$, and that 
$F^{-1}(G(\alpha)) \to 0$ as $\alpha \to 0$ if $I = (0, \infty)$ 
and as $\alpha \to -\infty$ if $I = \R$. 
Then we can take $\alpha_0 \in I$ such that $\beta < F^{-1}(G(\alpha_0)) < \ell$, 
and define $\hat{v}$ by (\ref{eq4.17}) with $\alpha = \alpha_0$. 
Then, by Lemma \ref{lem4.5}, $\hat{v}$ satisfies (\ref{eq4.1}) and (\ref{eq4.2}). 
Take $\beta_1$ such that $\beta < \beta_1 < \hat{v}(0) = F^{-1}(G(\alpha_0))$. 
Proposition \ref{prp4.1} implies that 
$0 < u(r, \beta) < u(r, \beta_1)$ for $r \geq 0$, 
which implies that $u(r, \beta)$ is stable by Lemma \ref{lem2.2}.
\end{proof}

Combining Theorem \ref{thm1.2} and the next lemma, 
we obtain Corollary \ref{cor1.4} immediately. 

\begin{lemma}\label{lem4.6}
Assume that $f(u)$ satisfies
\begin{equation}
	q_1 \leq \frac{f'(u)^2}{f(u)f''(u)} \leq q_2 \quad 
	\mbox{for all} \ u > 0
\label{eq4.20}
\end{equation}
with some positive constants $q_1$ and $q_2$. 
Then $(\ref{eq1.7})$ holds with $\ell = \infty$.
\end{lemma}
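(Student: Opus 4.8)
The plan is to show that $h(u) := f'(u)F(u)$ satisfies $q_1 \le h(u) \le q_2$ for all $u>0$, which is precisely (\ref{eq1.7}) with $\ell=\infty$. The idea is to convert the pointwise two-sided bound (\ref{eq4.20}) on $q(u)=f'(u)^2/(f(u)f''(u))$ into a power-type comparison for $f'$, and then to estimate $F(u)=\int_u^\infty ds/f(s)$ by the substitution $t=f(s)$.

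First I would rewrite (\ref{eq4.20}) in logarithmic form. Since $f,f'>0$ on $(0,\infty)$, dividing $f'^2\le q_2 ff''$ and $f'^2\ge q_1 ff''$ by $ff'$ gives
\[
\frac{1}{q_2}(\log f)'(s)\le (\log f')'(s)\le \frac{1}{q_1}(\log f)'(s),\qquad s>0.
\]
Because $f\in C^2(\R\setminus\{0\})$, these may be integrated on $(0,\infty)$; integrating from $u$ to $s$ (with $s\ge u$) and exponentiating yields
\[
f'(u)\Bigl(\frac{f(s)}{f(u)}\Bigr)^{1/q_2}\le f'(s)\le f'(u)\Bigl(\frac{f(s)}{f(u)}\Bigr)^{1/q_1},\qquad s\ge u.
\]

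Next I would estimate $F(u)$ directly. Since $f$ is increasing with $f(s)\to\infty$, the change of variable $t=f(s)$, $dt=f'(s)\,ds$, turns $F(u)$ into $\int_{f(u)}^\infty dt/(t\,f'(f^{-1}(t)))$. Using the lower bound for $f'$,
\[
F(u)\le \frac{f(u)^{1/q_2}}{f'(u)}\int_{f(u)}^\infty t^{-1-1/q_2}\,dt=\frac{f(u)^{1/q_2}}{f'(u)}\cdot q_2\,f(u)^{-1/q_2}=\frac{q_2}{f'(u)},
\]
the integral converging because $-1-1/q_2<-1$; hence $h(u)=f'(u)F(u)\le q_2$. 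Feeding the upper bound for $f'$ into the same substitution gives $F(u)\ge q_1/f'(u)$, i.e. $h(u)\ge q_1$, and (\ref{eq1.7}) with $\ell=\infty$ follows.

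The individual steps are routine; the only points needing care are the convergence of the $t$-integrals (ensured by $q_1,q_2>0$, which makes both exponents strictly below $-1$) and the legitimacy of the substitution, for which it suffices to note that $f$ is a $C^1$ increasing bijection of $(0,\infty)$ onto $(0,\infty)$. I note in passing that one could instead start from the identity $h'(u)=\frac{f''(u)}{f'(u)}\bigl(h(u)-q(u)\bigr)$; however, this relation pushes $h$ \emph{away} from the interval $[q_1,q_2]$ whenever $h$ lies outside it, so exploiting it would require separately pinning down the limits of $h$ as $u\to0$ and $u\to\infty$. The direct integral estimate is preferable precisely because it bypasses that boundary analysis.
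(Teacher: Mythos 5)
Your argument is correct, and it reaches the same two-sided bound $q_1/f'(u)\le F(u)\le q_2/f'(u)$ as the paper, but by a genuinely different (two-step) route. The paper's proof is a single integration: it rewrites (\ref{eq4.20}) as $q_1 f''/f'^2\le 1/f\le q_2 f''/f'^2$, observes that $f''>0$ together with $F(u)<\infty$ forces $f'(u)\to\infty$ as $u\to\infty$, and then integrates over $[u,\infty)$ using the exact antiderivative $-1/f'$ to get $q_1/f'(u)\le F(u)\le q_2/f'(u)$ directly. You instead first integrate the logarithmic form of (\ref{eq4.20}) over $[u,s]$ to get the power-type comparison $f'(u)\bigl(f(s)/f(u)\bigr)^{1/q_2}\le f'(s)\le f'(u)\bigl(f(s)/f(u)\bigr)^{1/q_1}$, and then evaluate $F(u)$ by the substitution $t=f(s)$. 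Both routes ultimately lean on a limit fact extracted from $F(u)<\infty$: the paper needs $f'(s)\to\infty$ (so that the boundary term at infinity vanishes), while you need $f(s)\to\infty$ (so that the substitution maps onto all of $(f(u),\infty)$); each follows easily from the standing hypotheses, and you do flag yours. The paper's version is shorter; yours has the small side benefit of producing the explicit pointwise comparison of $f'$ with powers of $f$, which is a reusable piece of information, at the cost of an extra integration and the (harmless) convergence check on the $t$-integrals. Your closing remark about the identity $h'=\tfrac{f''}{f'}(h-q)$ being repulsive rather than attractive is accurate and is a good reason to avoid that route.
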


\begin{proof}
From (\ref{eq4.20}) we have 
$$
	q_1\frac{f''(u)}{f'(u)^2} \leq \frac{1}{f(u)} \leq q_2\frac{f''(u)}{f'(u)^2}.
$$
Since $f''(u) > 0$ and $F(u) < \infty$, we have $f'(u) \to \infty$ as $u \to \infty$. 
Then, integrating the above on $[u, \infty)$ with $u > 0$, we obtain 
$$
	\frac{q_1}{f'(u)} \leq F(u) \leq \frac{q_2}{f'(u)} 
	\quad \mbox{for} \ u > 0.
$$
Thus $(\ref{eq1.7})$ holds with $\ell = \infty$.
\end{proof}


\section{Nonexistence of stable radial solutions}

\subsection{Proof of Theorem \ref{thm1.6}}
In this subsection, we assume that $N \geq 11$ and 
(f1) holds with $q_0 > q_{JL} > 1$.
Define $g(u) = u^p$ with $p = q_0/(q_0-1)$.
For $\sigma > 0$, let $z(s, \sigma)$ 
denote the solution of the problem
\begin{equation}
	\left\{
	\begin{array}{c}
	\dsp
	z''+\frac{N-1}{s}z'+ g(z) = 0 \quad \mbox{for} \  s > 0,
	\\[2ex]
	z(0) = \sigma, \quad z'(0) = 0.
	\end{array}
	\right.
\label{eq5.1}
\end{equation}
Define $G$ by (\ref{eq3.4}), and let $\sigma > 0$ be fixed. 
For $\alpha > 0$, define $\beta = \beta(\alpha) > 0$ as
\begin{equation}
	\frac{G(\sigma)}{G(1)} = \frac{F(\beta)}{F(\alpha)}.
\label{eq5.2}
\end{equation}
We note that $\beta(\alpha)$ is determined uniquely for each $\alpha > 0$, 
since $F(u)$ is strictly decreasing. 
We see that $\beta = \alpha$ if $\sigma = 1$.

For a solution $u(r, \alpha)$ of (\ref{eq1.4}), define $v(s, \alpha)$ as
\begin{equation}
	v(s, \alpha) = G^{-1}(\lam^{-2}F(u(r, \beta(\alpha)))) 
	\quad \mbox{with} \ 
	s = \frac{r}{\lam} \quad \mbox{and} \quad 
	\lam = \sqrt{\frac{F(\alpha)}{G(1)}},
\label{eq5.3}
\end{equation}
Then a relation between $u(r, \beta)$ and $v(s, \alpha)$ can be written as 
\begin{equation}
	\frac{G(v(s, \alpha))}{G(1)} = 
	\frac{F(u(r, \beta))}{F(\alpha)}.
\label{eq5.4}
\end{equation}
From (\ref{eq5.2}) and (\ref{eq5.4}) we have $v(0, \alpha) = \sigma$. 
Note that $p/(p-1) = q_0$. 
By Lemma \ref{lem3.1} (i), we  see that $v(s) = v(s, \alpha)$ satisfies 
\begin{equation}
	\left\{
	\begin{array}{c}
	\dsp
	v''+\frac{N-1}{s}v'+g(v) + 
	\frac{f'(u(r, \beta))F(u(r, \beta))-q_0}{g(v(s))G(v(s))}v'(s)^2 = 0 \quad \mbox{for} \  s > 0,
	\\[3ex]
	v(0) = \sigma \quad \mbox{and} \quad v'(0) = 0.
	\end{array}
	\right.
\label{eq5.5}
\end{equation}

Denote by $s_0(\sigma)$ the first zero of the solution $z(s, \sigma)$ of (\ref{eq5.1}). 
If $z(s, \sigma) > 0$ for all $s \geq 0$, define $s_0(\sigma) = \infty$. 
Since $F(u) \to \infty$ as $u \to 0$,
we have $\lam = \lam(\alpha) \to \infty$ as $\alpha \to 0$. 
We will show the following proposition.

\begin{proposition}\label{prp5.1}
For any $S \in (0, s_0(\sigma))$, one has 
\begin{equation}
	v(s, \alp) \to z(s, \sigma) \quad \mbox{in} \ C[0, S] 
	\quad \mbox{as} \ \alpha \to 0.
\label{eq5.6}
\end{equation}
\end{proposition}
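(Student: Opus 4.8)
The plan is to prove the convergence in \eqref{eq5.6} by exploiting the fact that $v(s,\alpha)$ solves the perturbed equation \eqref{eq5.5}, whose perturbation term vanishes in the limit $\alpha \to 0$, so that the limiting equation is exactly \eqref{eq5.1}. First I would fix $S \in (0, s_0(\sigma))$ and recall that on $[0,S]$ the unperturbed solution $z(s,\sigma)$ stays bounded away from zero, say $z(s,\sigma) \geq 2\delta > 0$. The key observation is that the coefficient of $v'(s)^2$ in \eqref{eq5.5} is $\bigl(f'(u(r,\beta))F(u(r,\beta)) - q_0\bigr)/\bigl(g(v)G(v)\bigr)$, and as $\alpha \to 0$ the relevant argument $u(r,\beta)$ tends to $0$: indeed, from \eqref{eq5.2} and the definition of $\beta(\alpha)$, together with $F(\alpha) \to \infty$ as $\alpha \to 0$, one checks that $\beta(\alpha) \to 0$, and more importantly that $u(r,\beta)$ stays small uniformly on the $r$-range corresponding to $s \in [0,S]$. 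By hypothesis (f1), $f'(u)F(u) \to q_0$ as $u \to 0$, so the factor $f'(u(r,\beta))F(u(r,\beta)) - q_0 \to 0$ uniformly, which forces the whole perturbation to become negligible.

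Next I would set up the convergence argument rigorously. The natural approach is a continuous-dependence / compactness argument for the ODE initial value problems on the fixed interval $[0,S]$. I would write \eqref{eq5.5} as a first-order system for $(v, v')$ and show that, for $\alpha$ small, the solutions $v(\cdot,\alpha)$ together with their derivatives are uniformly bounded and equicontinuous on $[0,S]$. Uniform bounds come from the fact that the perturbation coefficient is uniformly small and that $z(s,\sigma)$ provides an a priori barrier; once $v(s,\alpha)$ is shown to remain in a compact subset of $I = (0,\infty)$ bounded away from $0$ (using $z(s,\sigma) \geq 2\delta$ and the smallness of the perturbation to prevent $v$ from dropping below $\delta$), the nonlinearities $g(v)$ and $g(v)G(v)$ are bounded and Lipschitz there. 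An Arzel\`a--Ascoli argument then extracts a subsequence converging in $C[0,S]$ (and in $C^1$) to some limit $\bar{z}$, and passing to the limit in the integral formulation of \eqref{eq5.5} shows $\bar{z}$ solves \eqref{eq5.1} with $\bar{z}(0) = \sigma$, $\bar{z}'(0) = 0$. By uniqueness of the initial value problem \eqref{eq5.1}, $\bar{z} = z(\cdot,\sigma)$, and since every subsequence has the same limit, the full family converges.

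The main obstacle I expect is controlling the perturbation term \emph{uniformly in $s \in [0,S]$}, because the argument $u(r,\beta)$ of the coefficient depends on $r = \lam s$ with $\lam = \lam(\alpha) \to \infty$, so one must verify that as $s$ ranges over $[0,S]$ the corresponding values $u(\lam s, \beta)$ all remain small enough that (f1) applies. This requires translating the relation \eqref{eq5.4} carefully: since $G(v(s,\alpha)) = G(1) F(u(r,\beta))/F(\alpha)$ and $v(s,\alpha)$ stays in a compact range on $[0,S]$, the quotient $F(u(r,\beta))/F(\alpha)$ is bounded above and below, whence $F(u(r,\beta))$ is comparable to $F(\alpha) \to \infty$, forcing $u(r,\beta) \to 0$ uniformly on $[0,S]$. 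I would make this precise first, as it is the link that drives the whole limit. Once the uniform smallness of $u(r,\beta)$ is established, (f1) gives the uniform vanishing of $f'(u(r,\beta))F(u(r,\beta)) - q_0$, and the remainder of the argument is the standard continuous-dependence scheme outlined above.
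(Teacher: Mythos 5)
Your overall strategy is the same as the paper's: view \eqref{eq5.5} as a perturbation of \eqref{eq5.1} whose extra term vanishes as $\alpha\to 0$, get uniform $C^1$ bounds, extract a limit by Arzel\`a--Ascoli, identify it with $z(\cdot,\sigma)$ through the integral equation, and conclude by uniqueness of the limit along subsequences. You also correctly single out the main obstacle, namely that the smallness of $f'(u(r,\beta))F(u(r,\beta))-q_0$ is only available where $u(r,\beta)$ is uniformly small, which via \eqref{eq5.4} requires $v(s,\alpha)$ to stay in a compact subset of $(0,\infty)$.

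However, your resolution of that obstacle is circular as stated, and this is where the real work of the paper's proof lies. You assert that $v(s,\alpha)$ stays in a compact range on all of $[0,S]$ ``using $z(s,\sigma)\geq 2\delta$ and the smallness of the perturbation to prevent $v$ from dropping below $\delta$'' --- but the smallness of the perturbation is exactly what you are trying to deduce from $v\geq\delta$, so neither can be taken for granted first. The paper breaks this circle with a stopping-time construction: it fixes a level $\tau=z(\tilde S,\sigma)$ with $\tilde S\in(S,s_0(\sigma))$ and works on $[0,s_\gamma]$, where $s_\gamma$ is the first time $v$ reaches $\tau$ (so $\tau\leq v\leq\sigma$ there by monotonicity of $v$, and the perturbation is controlled there by construction). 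It must then prove that $\liminf_{\alpha\to 0}s_{\gamma(\alpha)}>S$, and this takes two nontrivial steps that your proposal does not supply: (i) a Riccati-type comparison (Lemma \ref{lem5.3}) giving the a priori bound $0\leq -v_s(s,\alpha)\leq C_1 s$ on $[0,s_\gamma]$, which is what prevents $s_\gamma$ from collapsing to $0$ (Lemma \ref{lem5.2}(iii)); and (ii) a contradiction argument showing $S_\infty=\liminf s_{\gamma_n}>S$, which uses the already-established convergence to $z$ on the shorter interval $[0,S_\infty-\delta]$ together with $v(s_{\gamma_n},\alpha_n)=\tau<z(S,\sigma)$ and the uniform derivative bound. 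Without some substitute for these two steps, ``the standard continuous-dependence scheme'' does not close: continuous dependence only operates on the interval where the perturbation is already known to be small, and a priori that interval could be arbitrarily short.
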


Take any $\tilde{S} \in (S, s_0(\sigma))$, and let $\tau = z(\tilde{S}, \sigma)$. 
Then $0 < \tau < z(S, \sigma) < \sigma$.
Define $\gamma = \gamma(\alpha) > 0$ by 
\begin{equation}
	\frac{G(\tau)}{G(1)} = 
	\frac{F(\gamma)}{F(\alpha)}.
\label{eq5.7}
\end{equation}
From (\ref{eq5.2}) and $\sigma > \tau$, we have $\gamma < \beta$. 
Since $f(u) > 0$ for $u > 0$, $u(r, \beta)$ is decreasing in $r > 0$, and hence 
either $u(r, \beta)$ has a zero for some $r_0 > 0$ or $u(r, \beta) > 0$ for all $r > 0$ and 
$u(r, \beta) \to 0$ as $r \to \infty$. 
Then there exists $r_{\gamma} > 0$ satisfying 
\begin{equation}
	\gamma = u(r_{\gamma}, \beta).
\label{eq5.8}
\end{equation}
Since $u(r, \beta)$ is decreasing in $r > 0$, we have 
\begin{equation}
	\gamma \leq u(r, \beta) \leq \beta \quad \mbox{for} \ 0 \leq r \leq r_{\gamma}.
\label{eq5.9}
\end{equation}
Define $s_{\gamma} = \lam^{-1}r_{\gamma}$. 
Then, from (\ref{eq5.4}), (\ref{eq5.7}) and (\ref{eq5.8}), 
we have $v(s_{\gamma}, \alpha) = \tau$.
Differentiating $G(v(s, \alpha)) = \lam^{-2}F(u(r, \beta))$ with respect to $s$, we have 
$$
	-\frac{v_s(s, \alpha)}{g(v(s, \alpha))} =
	-\frac{u_r(r, \beta)}{\lam f(u(r, \beta))}.
$$
Since $u_r(r, \beta) \leq 0$ for $0 \leq r \leq r_{\gamma}$, we have 
\begin{equation}
	v_s(s, \alpha) \leq 0 \quad \mbox{for} \ 0 \leq s \leq s_{\gamma}.
\label{eq5.10}
\end{equation}
Then we have 
\begin{equation}
	\tau \leq v(s, \alpha) \leq \sigma \quad \mbox{for} \ 0 \leq s \leq s_{\gamma}.
\label{eq5.11}
\end{equation}
Furthermore, we obtain the following lemma.

\begin{lemma}\label{lem5.2}
Define $v(s, \alpha)$ by $(\ref{eq5.3})$ for $0 \leq s \leq s_{\gamma}$. 
\begin{enumerate}
\item[{\rm (i)}] One has 
\begin{equation}
	\sup_{s \in [0, s_{\gamma}]}
	\frac{|f'(u(r, \beta))F(u(r, \beta))-q_0|}{g(v(s, \alpha))G(v(s, \alpha))} \to 0 
	\quad \mbox{as} \ \alpha \to 0.
\label{eq5.12}
\end{equation}
\item[{\rm (ii)}] 
For sufficiently small $\alpha > 0$,  
\begin{equation}
	0 \leq -v_s(s, \alpha) \leq C_1 s 
	\quad \mbox{for} \ 0 \leq s \leq s_{\gamma},
\label{eq5.13}
\end{equation}
where $C_1$ is a positive constant independent of $\alpha$ and $s$. 

\item[{\rm (iii)}] Denote $\gamma = \gamma(\alpha)$ by $(\ref{eq5.7})$. 
Then  $\liminf_{\alpha \to 0}s_{\gamma(\alpha)} > 0$.
\end{enumerate}
\end{lemma}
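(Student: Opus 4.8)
The plan is to prove the three parts in order, exploiting the integral form of the ODE $(\ref{eq5.5})$, the squeeze estimates $(\ref{eq5.9})$--$(\ref{eq5.11})$ that are already in hand, and the elementary identity $g(v)G(v)=v/(p-1)$ (valid here since $g(u)=u^p$ and $G(u)=u^{1-p}/(p-1)$). For part (i) I would first control the numerator. From $(\ref{eq5.2})$ and $(\ref{eq5.7})$ one has $F(\beta)=F(\alpha)G(\sigma)/G(1)$ and $F(\gamma)=F(\alpha)G(\tau)/G(1)$; since $F(u)\to\infty$ as $u\to0$ and the ratios $G(\sigma)/G(1)$, $G(\tau)/G(1)$ are fixed positive constants, letting $\alpha\to0$ forces $F(\beta),F(\gamma)\to\infty$, hence $\beta\to0$ and $\gamma\to0$. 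By the squeeze $(\ref{eq5.9})$, $u(r,\beta)\to0$ uniformly for $r\in[0,r_{\gamma}]$, i.e.\ uniformly for $s\in[0,s_{\gamma}]$, so {\rm (f1)} gives $f'(u(r,\beta))F(u(r,\beta))\to q_0$ uniformly and the numerator tends to $0$ uniformly. For the denominator I would use $g(v)G(v)=v/(p-1)$ together with $(\ref{eq5.11})$, which yields $g(v)G(v)\ge\tau/(p-1)>0$ uniformly in $\alpha$. Dividing gives $(\ref{eq5.12})$.

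For part (ii), set $c(s)=(f'(u(r,\beta))F(u(r,\beta))-q_0)/(g(v)G(v))$ and $\varepsilon(\alpha)=\sup_{[0,s_{\gamma}]}|c|$, so $\varepsilon(\alpha)\to0$ by part (i). Multiplying $(\ref{eq5.5})$ by $s^{N-1}$, integrating from $0$ to $s$, and using $v'(0)=0$, one obtains
$$
	-v'(s)=\frac{1}{s^{N-1}}\int_0^s t^{N-1}\bigl(g(v(t))+c(t)v'(t)^2\bigr)\,dt ,
$$
with $-v'\ge0$ by $(\ref{eq5.10})$. I would then run a continuity (bootstrap) argument with the candidate bound $-v'(s)\le C_1 s$ and $C_1=2g(\sigma)/N$; this holds near $s=0$ because $-v'(s)/s\to g(\sigma)/N<C_1$. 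On any subinterval where the candidate bound holds, $v\le\sigma$ gives $g(v)\le g(\sigma)$, whence
$$
	-v'(s)\le\frac{g(\sigma)}{N}\,s+\frac{\varepsilon(\alpha)C_1^2}{N+2}\,s^3 ,
$$
so the bootstrap closes as soon as $\varepsilon(\alpha)s^2$ is small. The difficulty, and the main obstacle, is that this needs an a priori upper bound on $s_{\gamma}$ that is independent of $\alpha$.

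I would resolve this inside the same bootstrap using the \emph{lower} estimate coming from $v\ge\tau$. On the bootstrap interval $v'^2\le C_1^2 s^2$, so wherever $\varepsilon(\alpha)C_1^2 s^2\le\tfrac12 g(\tau)$ one has $g(v)+c\,v'^2\ge\tfrac12 g(\tau)$, hence $-v'(s)\ge\frac{g(\tau)}{2N}s$; integrating gives $\sigma-\tau\ge\frac{g(\tau)}{4N}s^2$, i.e.\ $s\le\bar S:=\sqrt{4N(\sigma-\tau)/g(\tau)}$. Thus for $\alpha$ small enough that $\varepsilon(\alpha)C_1^2\bar S^2<\tfrac12 g(\tau)$, the bootstrap interval cannot extend beyond $\bar S$, and then $\varepsilon(\alpha)s^2\le\varepsilon(\alpha)\bar S^2\to0$ closes the upper estimate on all of $[0,s_{\gamma}]$, proving $(\ref{eq5.13})$ with a uniform $C_1$. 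Finally, part (iii) is immediate from (ii): integrating $-v'\le C_1 s$ over $[0,s_{\gamma}]$ yields $\sigma-\tau=\int_0^{s_{\gamma}}(-v')\,ds\le\frac{C_1}{2}s_{\gamma}^2$, so $s_{\gamma}\ge\sqrt{2(\sigma-\tau)/C_1}>0$ uniformly in $\alpha$, whence $\liminf_{\alpha\to0}s_{\gamma(\alpha)}>0$.
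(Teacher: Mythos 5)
Your proposal is correct, and parts (i) and (iii) coincide with the paper's argument: (i) is the same squeeze ($\beta(\alpha)\to0$, uniform smallness of the numerator via (f1), and $g(v)G(v)=v/(p-1)\ge\tau/(p-1)$ from (\ref{eq5.11})), and (iii) is the same integration of (\ref{eq5.13}). Part (ii) is where you genuinely diverge. The paper sets $y=-v_s$, derives the Riccati-type inequality $y'\le g(\sigma)+My^2$ from (\ref{eq5.5}), and invokes Lemma \ref{lem5.3} (comparison with $\tan$), which requires $\sqrt{g(\sigma)M}\,s_{\gamma}\le\pi/4$; since $s_{\gamma}=s_{\gamma(\alpha)}$, this choice of $M$ tacitly presupposes a bound on $s_{\gamma}$ that is uniform in $\alpha$, which the paper does not make explicit at that point. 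You instead run a continuity/bootstrap argument on the integrated equation and, crucially, manufacture the needed uniform upper bound $s_{\gamma}\le\bar S=\sqrt{4N(\sigma-\tau)/g(\tau)}$ from the lower estimate $-v_s\ge \frac{g(\tau)}{2N}s$ on the bootstrap interval (valid once $\varepsilon(\alpha)C_1^2s^2\le\frac12 g(\tau)$). This closes the loop cleanly: the bootstrap interval cannot exceed $\bar S$, so $\varepsilon(\alpha)s^2\le\varepsilon(\alpha)\bar S^2\to0$ keeps the upper bound strict and forces the interval to reach $s_{\gamma}$. What your route buys is self-containedness and an explicit repair of the one loose point in the published proof; what the paper's route buys is brevity, and its implicit bound on $s_{\gamma}$ can in any case be obtained more directly from the original variable (integrating (\ref{eq1.4}) gives $F(u(r,\beta))-F(\beta)\ge r^2/(2N)$, hence $s_{\gamma}^2\le 2N(G(\tau)-G(\sigma))$ by (\ref{eq5.2}) and (\ref{eq5.7})). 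Minor cosmetic differences (your $C_1=2g(\sigma)/N$ versus the paper's $C_1=2g(\sigma)$) are immaterial.
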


To prove Lemma \ref{lem5.2} (ii), we need the following lemma.

\begin{lemma}\label{lem5.3}
Assume that $y \in C^1[0, T]$ satisfies 
$$
	y'(t) \leq A + By(t)^2 \quad \mbox{for} \ 0 \leq t \leq T
	\quad \mbox{and} \quad y(0) = 0,
$$
where $A, B$ and $T$ are positive constants satisfying 
$\sqrt{AB}T \leq \pi/4$.
Then 
\begin{equation}
	y(t) \leq 2At \quad \mbox{for} \ 0 \leq t \leq T.
\label{eq5.14}
\end{equation}
\end{lemma}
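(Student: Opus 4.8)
The plan is to compare $y$ with the exact solution of the associated Riccati equation $Y' = A + BY^2$, $Y(0)=0$, namely
$Y(t) = \sqrt{A/B}\,\tan(\sqrt{AB}\,t)$. Under the hypothesis $\sqrt{AB}\,T \le \pi/4$ we have $\sqrt{AB}\,t \le \pi/4 < \pi/2$ for every $t \in [0,T]$, so $Y$ is finite, smooth, nonnegative and increasing on $[0,T]$; in particular it stays well away from its blow-up at $\sqrt{AB}\,t = \pi/2$.

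First I would establish the comparison $y(t) \le Y(t)$ on $[0,T]$. Setting $w = y - Y$, we have $w(0)=0$, and subtracting the equation for $Y$ from the inequality for $y$ gives $w'(t) \le B(y(t)^2 - Y(t)^2) = B\bigl(y(t)+Y(t)\bigr)\,w(t)$. Writing $c(t) = B\bigl(y(t)+Y(t)\bigr)$, which is continuous on $[0,T]$ since $y$ and $Y$ are, the integrating factor $\mu(t) = \exp\bigl(-\int_0^t c\bigr)$ satisfies $(\mu w)' = \mu(w' - cw) \le 0$. Hence $\mu(t)w(t) \le \mu(0)w(0) = 0$, and since $\mu > 0$ this yields $w(t) \le 0$, i.e.\ $y(t) \le Y(t)$ on $[0,T]$.

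Next I would reduce the target bound to an elementary trigonometric inequality. Since $2At = 2\sqrt{A/B}\cdot\sqrt{AB}\,t$, the claim $Y(t) \le 2At$ is equivalent to $\tan\theta \le 2\theta$ with $\theta = \sqrt{AB}\,t \in [0,\pi/4]$. To prove this, set $h(\theta) = 2\theta - \tan\theta$; then $h(0)=0$ and $h'(\theta) = 2 - \sec^2\theta = 1 - \tan^2\theta \ge 0$ on $[0,\pi/4]$, so $h$ is nondecreasing there and hence $h \ge 0$. Combining $y(t) \le Y(t)$ with $Y(t) \le 2At$ gives $(\ref{eq5.14})$.

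The only delicate point is that the hypothesis $\sqrt{AB}\,T \le \pi/4$ must do double duty: it keeps $Y$ finite (by forcing $\sqrt{AB}\,t < \pi/2$) and simultaneously confines $\theta$ to the range $[0,\pi/4]$ on which $\tan\theta \le 2\theta$ holds. This is the main, though mild, obstacle, and it is precisely why the constant $2$ (rather than something smaller) appears in $(\ref{eq5.14})$; I would verify that the endpoint $\theta=\pi/4$, where $h'(\theta)=0$, is exactly the borderline case so that the constant is optimal for this argument.
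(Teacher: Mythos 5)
Your proof is correct and follows essentially the same route as the paper: compare $y$ with the solution of the Riccati equation $Y'=A+BY^2$, $Y(0)=0$ (the paper does this after rescaling $Y=\sqrt{B/A}\,y$, you do it directly), and then invoke $\tan\theta\le 2\theta$ on $[0,\pi/4]$. Your write-up actually supplies the comparison argument that the paper leaves implicit in the phrase ``this implies $Y(t)\le\tan(\sqrt{AB}t)$'', so nothing is missing.
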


\begin{proof}[Proof of Lemma $\ref{lem5.3}$]
Define $Y(t) = \sqrt{B/A}y(t)$. Then $Y$ satisfies
$$
	Y'(t) \leq \sqrt{AB}(1+Y(t)^2) \quad  \mbox{for} \ 0 \leq t \leq T
	\quad \mbox{and} \quad Y(0) = 0.
$$
This implies that $Y(t) \leq \tan(\sqrt{AB}t)$ for $0 \leq t \leq T$. 
Since $\tan x \leq 2x$ for $0 \leq x \leq \pi/4$, we have 
$$
	Y(t) \leq 2\sqrt{AB}t \quad \mbox{for} \ 0 \leq t \leq T,
$$
which implies that (\ref{eq5.14}) holds.
\end{proof}

\begin{proof}[Proof of Lemma $\ref{lem5.2}$]
(i) From (\ref{eq5.2}) we see that $\beta(\alpha) \to 0$ as $\alpha \to 0$.
From (\ref{eq5.9}) we have, for any $r \in [0, r_{\gamma}]$,
$$
	0 < u(r, \beta) \leq \beta(\alpha) \to 0
	\quad \mbox{as} \ \alpha \to 0.
$$ 
The interval $0 \leq s \leq s_{\gamma}$ corresponds to $0 \leq r \leq r_{\gamma}$. 
Then, from (\ref{eq1.5}) we obtain 
$$
	\sup_{s \in [0, s_{\gamma}]}|f'(u(r, \beta))F(u(r, \beta))-q_0| \to 0 
	\quad \mbox{as} \ \alpha \to 0.
$$
Note that $g(u)G(u) = u/(p-1)$.
From (\ref{eq5.11}) we have 
$$
	g(v(s, \alpha))G(v(s, \alpha)) = \frac{v(s, \alpha)}{p-1} \geq \frac{\tau}{p-1} > 0 
	\quad \mbox{for} \ 0 \leq s \leq s_{\gamma}. 
$$
Thus we obtain (\ref{eq5.12}).
 
(ii) Define $y(s) = -v_s(s, \alpha)$. 
Then, from (\ref{eq5.10}) we have $y(0) = 0$ and $y(s) \geq 0$ for $0 \leq s \leq s_{\gamma}$.  
From (\ref{eq5.5}) and (\ref{eq5.11}), $y$ satisfies 
$$
	y'(s) \leq g(\sigma) + \frac{|f'(u(r, \beta))F(u(r, \beta))-q_0|}
	{g(v(s, \alpha))G(v(s, \alpha))}y(s)^2
	\quad \mbox{for} \ 0 < s \leq s_{\gamma}.
$$
Take $M > 0$ such that 
$$
	\sqrt{g(\sigma)M}s_{\gamma} \leq \frac{\pi}{4}. 
$$
From (\ref{eq5.12}) there exists $\alpha_0 > 0$ such that, if $0 < \alpha \leq \alpha_0$, 
$$
	\frac{|f'(u(r, \beta))F(u(r, \beta))-q_0|}{g(v(s, \alpha))G(v(s, \alpha))}
	\leq M
	\quad \mbox{for} \ 0 \leq s \leq s_{\gamma}.
$$
By Lemma \ref{lem5.3}, we obtain $y'(s) \leq C_1s$ for $0 \leq s \leq s_{\gamma}$ 
with $C_1 = 2g(\sigma)$. 
Thus we obtain (\ref{eq5.13}) if $0 < \alpha \leq \alpha_0$.

(iii) 
Integrating the both sides of (\ref{eq5.13}) on $[0, s_{\gamma(\alpha)}]$, we obtain 
$$
	0 \leq \sigma - \tau \leq \frac{C_1}{2}s_{\gamma(\alpha)}^2,
$$
which implies that 
$\liminf_{\alpha \to 0}s_{\gamma(\alpha)} > 0$.
\end{proof}

\begin{lemma}\label{lem5.4}
Let $\{\alp_n\}$ be a sequence satisfying $0 < \alp_n \to 0$ as $n \to \infty$. 
Denote by $\gamma_n = \gamma(\alpha_n)$ and $s_{\gamma_n} = s_{\gamma(\alpha_n)}$, 
where $\gamma(\alpha)$ is defined by $(\ref{eq5.7})$.
Assume that 
\begin{equation}
	\liminf_{n \to \infty}s_{\gamma_n} > \hat{S}
\label{eq5.15}
\end{equation}
with some $\hat{S} > 0$. 
Then there exists a subsequence of $\{\alpha_n\}$, which is denoted by $\{\alpha_n\}$ again, 
such that 
\begin{equation}
	v(s, \alp_n) \to z(s, \sigma) \quad \mbox{in} \ C[0, \hat{S}] 
	\quad \mbox{as} \ n \to \infty.
\label{eq5.16}
\end{equation}
\end{lemma}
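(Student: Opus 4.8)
The plan is to establish precompactness of $\{v(\cdot,\alpha_n)\}$ in $C[0,\hat S]$ via the Arzel\`a--Ascoli theorem and then to identify the limit of a convergent subsequence with $z(\cdot,\sigma)$ through uniqueness for the initial value problem \eqref{eq5.1}. First I would note that, by \eqref{eq5.15}, there is $n_0$ such that $\hat S < s_{\gamma_n}$ for all $n \geq n_0$; hence every estimate of Lemma \ref{lem5.2} valid on $[0,s_{\gamma_n}]$ holds on $[0,\hat S]$ for $n \geq n_0$. In particular \eqref{eq5.11} gives $\tau \leq v(s,\alpha_n) \leq \sigma$, and since the constant $C_1$ in \eqref{eq5.13} is independent of $\alpha$, Lemma \ref{lem5.2} (ii) yields $0 \leq -v_s(s,\alpha_n) \leq C_1\hat S$ on $[0,\hat S]$ for $n$ large. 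Thus the sequence is uniformly bounded and uniformly Lipschitz on $[0,\hat S]$, and Arzel\`a--Ascoli provides a subsequence, still denoted $\{v(\cdot,\alpha_n)\}$, converging in $C[0,\hat S]$ to some $v_\infty$.

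To identify $v_\infty$, I would pass to the limit in the integral form of \eqref{eq5.5}. Setting $E_n(s) = (f'(u(r,\beta))F(u(r,\beta))-q_0)/(g(v(s,\alpha_n))G(v(s,\alpha_n)))$ and writing \eqref{eq5.5} as $(s^{N-1}v_s)' = -s^{N-1}(g(v)+E_n(s)v_s^2)$, two integrations using $v(0,\alpha_n)=\sigma$ and $v_s(0,\alpha_n)=0$ give
\[
v(s,\alpha_n) = \sigma - \int_0^s \zeta^{1-N}\int_0^\zeta t^{N-1}\bigl(g(v(t,\alpha_n)) + E_n(t)v_s(t,\alpha_n)^2\bigr)\,dt\,d\zeta .
\]
Since the bracket is bounded on $[0,\hat S]$, the inner integral is $O(\zeta^N)$ near $\zeta = 0$ and the double integral is well defined.

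Passing to the limit on $[0,\hat S]$, the perturbation term is the delicate part but is controlled as follows: Lemma \ref{lem5.2} (ii) gives $v_s(t,\alpha_n)^2 \leq C_1^2\hat S^2$, while Lemma \ref{lem5.2} (i) gives $\sup_{[0,\hat S]}|E_n| \to 0$, so $|E_n(t)v_s(t,\alpha_n)^2| \leq C_1^2\hat S^2\sup_{[0,\hat S]}|E_n| \to 0$ uniformly and its double integral vanishes in the limit. For the remaining term, the uniform convergence $v(\cdot,\alpha_n) \to v_\infty$ together with the uniform continuity of $g$ on the compact range $[\tau,\sigma]$ yields $g(v(\cdot,\alpha_n)) \to g(v_\infty)$ uniformly, so the double integral converges. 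Hence $v_\infty(s) = \sigma - \int_0^s \zeta^{1-N}\int_0^\zeta t^{N-1}g(v_\infty(t))\,dt\,d\zeta$, which is the integral form of \eqref{eq5.1} with $v_\infty(0)=\sigma$ and $v_\infty'(0)=0$. By uniqueness for \eqref{eq5.1}, $v_\infty = z(\cdot,\sigma)$ on $[0,\hat S]$, which is \eqref{eq5.16}. The only genuine obstacle is the treatment of the perturbation term, and the gradient bound of Lemma \ref{lem5.2} (ii) combined with the uniform smallness in Lemma \ref{lem5.2} (i) reduces it to a routine uniform estimate.
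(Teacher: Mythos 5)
Your proof is correct and follows essentially the same route as the paper's: uniform bounds on $[0,\hat S]$ from \eqref{eq5.11} and Lemma \ref{lem5.2}, Arzel\`a--Ascoli, and passage to the limit in an integral form of \eqref{eq5.5}, with the perturbation term killed by Lemma \ref{lem5.2} (i) combined with the gradient bound. The only cosmetic difference is that you integrate twice and extract a $C[0,\hat S]$-convergent subsequence from the Lipschitz bound alone, whereas the paper integrates once, bounds $v_n''$ to get $C^1$-compactness, and identifies the limit from the first-order integral equation; both variants are sound.
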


\begin{proof}
For simplicity, we denote by $v_n(s) = v(s, \alpha_n)$. 
From (\ref{eq5.15}), we have $s_{\gamma_n} > \hat{S}$ for sufficiently large $n$. 
From (\ref{eq5.11}) and Lemma \ref{lem5.2} (ii) we have 
\begin{equation}
	\tau \leq v_n(s) \leq \sigma 
	\quad \mbox{for} \ 0 \leq s \leq \hat{S}
\label{eq5.17}
\end{equation}
and 
\begin{equation}
	\left|\frac{v_n'(s)}{s}\right| \leq C_1 
	\quad \mbox{and} \quad 
	|v_n'(s)| \leq C_1\hat{S}
	\quad \mbox{for} \ 0 < s \leq \hat{S} 
\label{eq5.18}
\end{equation}
for sufficiently large $n$, 
where $C_1 > 0$ is a constant independent of $n$.
By Lemma \ref{lem5.2} (i) and (\ref{eq5.18}), we have 
\begin{equation}
	\sup_{s \in [0, \hat{S}]}\left|\frac{f'(u(r, \beta_n)))F(u(r, \beta_n)))-q_0}
	{g(v_n(s))G(v_n(s))}v_n'(s)^2\right| \to 0 
	\quad \mbox{as} \ n \to \infty,
\label{eq5.19}
\end{equation}
where $\beta_n = \beta(\alpha_n)$. 
Integrating (\ref{eq5.5}) on $[0, s]$ with $0 < s \leq \hat{S}$, 
we obtain  
\begin{equation}
	v_n'(s) = -\frac{1}{s^{N-1}}\int^{s}_0 t^{N-1}\left(
	g(v_n(t)) + \frac{f'(u(r, \beta_n))F(u(r, \beta_n))-q_0}{g(v_n(t)G(v_n(t))}
	v_n'(t)^2 \right)ds.
\label{eq5.20}
\end{equation}
Note that $v_n(0) = \sigma$ and $v_n'(0) = 0$. 
Making use of L'Hospital's rule in (\ref{eq5.20}), we have 
$$
	v_n''(0) = \lim_{s \to 0}\frac{v_n'(s)}{s} 
	= -\frac{g(\sigma)}{N}.
$$
Then $v_n \in C^2[0, \hat{S}]$. 
Using (\ref{eq5.17}), (\ref{eq5.18}) and (\ref{eq5.19}) in (\ref{eq5.5}), we see that 
$v_n''(\xi)$ is uniformly bounded on $s \in [0, \hat{S}]$ for sufficiently large $n$.  
By the Ascoli-Arzel\'{a} theorem, 
there exist $\bar{v} \in C^1[0, \hat{S}]$ and 
a subsequence, which is denoted by $\{v_n\}$ again, 
such that $v_n \to \bar{v}$ in $C^1[0, \hat{S}]$ as $n \to \infty$. 
Letting $n \to \infty$ in (\ref{eq5.20}), we obtain 
$$
	\bar{v}'(s) = -\frac{1}{s^{N-1}}\int^{s}_0 t^{N-1}
	g(\bar{v}(t)) dt \quad 
	\mbox{for} \ 0 < s \leq \hat{S},
$$
which implies that $\bar{v}'(0) = 0$. 
Since $\bar{v}(0) = \sigma$, $\bar{v}$ solves (\ref{eq5.1}), 
that is $\bar{v}(s) \equiv z(s, \sigma)$. 
Thus we obtain (\ref{eq5.16}).
\end{proof}

\begin{proof}[Proof of Proposition $\ref{prp5.1}$] 
Let $\{\alp_n\}$ be a sequence satisfying $0 < \alp_n \to 0$ as $n \to \infty$. 
Denote by $\gamma_n = \gamma(\alpha_n)$, $s_{\gamma_n} = s_{\gamma(\alpha_n)}$.
Let 
\begin{equation}
	S_{\infty} = \liminf_{n \to \infty}s_{\gamma_n}.
\label{eq5.21}
\end{equation}
Then $S_{\infty} > 0$ by Lemma \ref{lem5.2} (iii). 
We will show that $S_{\infty} > S$. 
Assume by contradiction that $S_{\infty} \leq S$. 
From (\ref{eq5.21}) 
there exists a subsequences, which we denote by $\{\alpha_n\}$ and $\{\gamma_n\}$ again, 
such that $\lim_{n \to \infty}s_{\gamma_n} = S_{\infty}$. 
Recall that $v(s_{\gamma_n}, \alpha_n) = \tau < z(S, \sigma)$ for all $n \in \N$. 
For $\delta > 0$ to be chosen later, we have
$$
	v(s_{\gamma_n}-2\delta, \alpha_n) = v(s_{\gamma_n}, \alpha_n) 
	-\int^{s_{\gamma_n}}_{s_{\gamma_n}-2\delta} v_s(\xi, \alpha_n)d\xi
	\leq \tau + 
	\int^{s_{\gamma_n}}_{s_{\gamma_n}-2\delta} |v_s(\xi, \alpha_n)|d\xi.
$$
By Lemma \ref{lem5.2} (ii), 
$\sup_{s \in [0, s_{\gamma_n}]}|v_s(s, \alpha_n)|$ is uniformly bounded for 
sufficiently large $n$. 
Then we can choose $\delta > 0$ which is independent of $n$ such that 
$$
	v(s_{\gamma_n}-2\delta, \alpha_n) < z(S, \sigma)
$$
for sufficiently large $n$. 
Recall that $s_{\gamma_n} \to S_{\infty}$ as $n \to \infty$. 
Then we have 
$$
	s_{\gamma_n}-2\delta < S_{\infty}-\delta < s_{\gamma_n}
$$
for sufficiently large $n$. 
Since $v(s, \alpha_n)$ is decreasing in $s > 0$, we have 
\begin{equation}
	z(S, \sigma) > v(s_{\gamma_n}-2\delta, \alpha_n) > v(S_{\infty}-\delta, \alpha_n)
\label{eq5.22}
\end{equation}
for sufficiently large $n$. 
Applying Lemma \ref{lem5.4} with $\hat{S} = S_{\infty} -\delta$, we see that 
there exists a subsequence of $\{\alpha_n\}$, which is denoted by $\{\alpha_n\}$ again, 
such that 
\begin{equation}
	v(s, \alp_n) \to z(s, \sigma) \quad \mbox{in} \ C[0, S_{\infty}-\delta] 
	\quad \mbox{as} \ n \to \infty.
\label{eq5.23}
\end{equation}
Note that $z(s, \sigma)$ is monotone decreasing in $s > 0$ and 
$S_{\infty} -\delta < S_{\infty} \leq S$. 
Then, from (\ref{eq5.23}) we obtain 
$$
	\lim_{n \to \infty}v(S_{\infty}-\delta, \alp_n) 
	= z(S_{\infty}-\delta, \sigma) > z(S, \sigma).
$$
On the other hand, from (\ref{eq5.22}) we obtain 
$$
	\lim_{n \to \infty}v(S_{\infty}-\delta, \alpha_n) \leq z(S, \sigma),
$$
which is a contradiction. 
Thus we obtain $S_{\infty} > S$. 

Now, applying Lemma \ref{lem5.4} with $\hat{S} = S$, we obtain 
\begin{equation}
	v(s, \alpha_{n_k}) \to z(s, \sigma) \quad \mbox{in} \ 
	C[0, S] \quad \mbox{as} \ n \to \infty
\label{eq5.24}
\end{equation}
for some subsequence $\{\alpha_{n_k}\}$ of $\{\alpha_n\}$. 
Therefore, for any sequence $\{\alpha_n\}$, 
there exists a subsequence $\{\alpha_{n_k}\}$ such that  
(\ref{eq5.24}) holds. 
This implies that (\ref{eq5.6}) holds.
\end{proof}

We recall some properties of solutions to (5.1) from \cite{Wan}. 

\begin{lemma}\label{lem5.5}
Let $N \geq 3$, and let $z(s, \sigma)$ be a solution of $(\ref{eq5.1})$ with $g(u) = u^p$. 
Then, for any $\sigma > 0$, $z_0(\sigma) < \infty$ if $1 < p < p_S$ and 
$z_0(\sigma) = \infty$ if $p \geq p_S$. 
Furthermore, for any $\sigma_1 > \sigma_2 > 0$, the following hold. 
\begin{enumerate}
\item[{\rm (i)}] 
If $1 < p < p_S$ then $z(s, \sigma_1)-z(s, \sigma_2)$ has 
at least one zero in $(0, \min\{z_0(\sigma_1), z_0(\sigma_2)\})$. 

\item[{\rm (ii)}] 
If $p = p_S$ then $z(s, \sigma_1)-z(s, \sigma_2)$ has 
exactly one zero for $s > 0$.

\item[{\rm (iii)}] 
If $p_S < p < p_{JL}$ then $z(s, \sigma_1)-z(s, \sigma_2)$ has 
infinitely many zeros for $s > 0$.
\end{enumerate}
\end{lemma}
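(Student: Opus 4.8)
The plan is to exploit the exact scaling invariance of $(\ref{eq5.1})$ together with the Emden--Fowler transformation, which turns everything into the phase--plane analysis of a single autonomous ODE. Since $(\ref{eq5.1})$ with $g(u)=u^p$ is invariant under $z\mapsto\mu^{2/(p-1)}z(\mu\,\cdot)$, one has the identity $z(s,\sigma)=\sigma\,z(\sigma^{(p-1)/2}s,1)$, so that $s_0(\sigma)=\sigma^{-(p-1)/2}s_0(1)$; in particular the finiteness dichotomy for the first zero is independent of $\sigma$ and it suffices to treat $\sigma=1$. First I would recall the classical fact that $z(\cdot,1)$ crosses zero at a finite $s$ when $1<p<p_S$ and stays positive for all $s>0$ when $p\geq p_S$. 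With the reduction in hand this yields the stated dichotomy for $s_0(\sigma)$, and it also shows that $s_0$ is strictly decreasing in $\sigma$, so $\min\{s_0(\sigma_1),s_0(\sigma_2)\}=s_0(\sigma_1)$ whenever $\sigma_1>\sigma_2$.

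For part (i) I would reprove nothing. Once $s_0(\sigma_1)<\infty$ is known (as $1<p<p_S$), the function $g(u)=u^p$ satisfies $g'(u)>0$, $g''(u)>0$ and $ug'(u)-g(u)=(p-1)u^p>0$ for $u>0$, so the Wronskian/Sturm comparison in the proof of Lemma~\ref{lem2.1} applies verbatim to $(\ref{eq5.1})$. Applying it with the roles chosen so that the smaller first zero $s_0(\sigma_1)$ plays the part of $r_0(\alpha)$ produces a zero of $z(\cdot,\sigma_1)-z(\cdot,\sigma_2)$ in $(0,s_0(\sigma_1))$, which is exactly (i).

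For (ii) and (iii) the solutions are global and positive, so a finer analysis is needed. Setting $t=\log s$ and $w(t)=s^{2/(p-1)}z(s)$ turns $(\ref{eq5.1})$ into the autonomous equation $w''+\beta w'-\gamma w+w^p=0$ with $\beta=N-2-\tfrac{4}{p-1}$ and $\gamma=\tfrac{2}{p-1}\bigl(N-2-\tfrac{2}{p-1}\bigr)$, whose nonzero equilibrium $w_*=\gamma^{1/(p-1)}$ corresponds to the singular solution. The crucial point is that the $\sigma$--scaling becomes a pure $t$--translation, $w(t,\sigma)=w\bigl(t+\tfrac{p-1}{2}\log\sigma,1\bigr)$, so the zeros of $z(\cdot,\sigma_1)-z(\cdot,\sigma_2)$ coincide with those of $\eta(t):=w(t,1)-w(t+c,1)$ for the fixed shift $c=\tfrac{p-1}{2}\log(\sigma_1/\sigma_2)\neq0$. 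Linearizing at $w_*$ gives the characteristic roots $\tfrac12\bigl(-\beta\pm\sqrt{\beta^2-4(p-1)\gamma}\bigr)$: the sign of $\beta$ separates $p\lessgtr p_S$, and a computation shows the discriminant $\beta^2-4(p-1)\gamma$ vanishes precisely at $p_{JL}$, separating $p\lessgtr p_{JL}$. When $p=p_S$ one has $\beta=0$, the equation is conservative, and $w(\cdot,1)$ is the explicit homoclinic loop to the origin coming from the Aubin--Talenti bubble; when $p_S<p<p_{JL}$ the discriminant is negative, $w_*$ is a stable spiral, and $w(\cdot,1)$ winds around $w_*$ infinitely often.

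The hard part will be turning this qualitative picture into the exact counts. For (ii) I would use that in the Emden--Fowler variable $w(\cdot,1)$ is strictly unimodal and the width of its level sets is strictly monotone, so the equation $w(t,1)=w(t+c,1)$ has exactly one solution, giving exactly one zero of $\eta$. For (iii) I would show that near $w_*$ the difference $\eta$ satisfies $\eta''+\beta\eta'+\bigl(p\zeta(t)^{p-1}-\gamma\bigr)\eta=0$ with $\zeta(t)\to w_*$, an asymptotically autonomous linear equation whose limiting form $\eta''+\beta\eta'+(p-1)\gamma\,\eta=0$ has complex roots; by a Sturm comparison $\eta$ then changes sign on every interval of length $\sim 2\pi/\sqrt{4(p-1)\gamma-\beta^2}$ as $t\to+\infty$, forcing infinitely many zeros, while a separate estimate controls the behavior as $t\to-\infty$ (where $w\to0$). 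Making the spiral and unimodality statements rigorous, rather than merely reading them off the linearization, is the main technical obstacle.
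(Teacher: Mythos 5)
The paper does not prove Lemma \ref{lem5.5} at all: it is quoted verbatim from the cited references (\cite{Wan}, and \cite{Tel} for the exponential case used elsewhere), so there is no in-paper argument to compare against. Your sketch is, in outline, exactly the classical proof that those references carry out: the scaling identity $z(s,\sigma)=\sigma z(\sigma^{(p-1)/2}s,1)$ reduces everything to $\sigma=1$ and turns the $\sigma$-dependence into a time translation after the Emden--Fowler change of variables, part (i) follows from the Sturm/Wronskian argument of Lemma \ref{lem2.1} applied to $g(u)=u^p$, part (ii) from the symmetry and strict unimodality of the homoclinic orbit in the conservative case $\beta=0$, and part (iii) from oscillation of the asymptotically autonomous linear equation satisfied by the difference when the linearization at $w_*$ has complex eigenvalues. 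Your constants $\beta$, $\gamma$, $w_*$ and the identifications $\beta=0\Leftrightarrow p=p_S$ and $\beta^2=4(p-1)\gamma\Leftrightarrow p=p_{JL}$ all check out. The one substantive ingredient you assert rather than establish is that for $p>p_S$ the regular solution satisfies $w(t,1)\to w_*$ as $t\to+\infty$ (equivalently $s^{2/(p-1)}z(s,1)\to L$); without this convergence the coefficient $p\zeta(t)^{p-1}-\gamma$ in your equation for $\eta$ need not eventually exceed $\beta^2/4$ and the Sturm comparison in (iii) does not launch. This convergence is itself one of the main theorems of \cite{Wan} (proved there via the Lyapunov function $E=\tfrac12(w')^2-\tfrac{\gamma}{2}w^2+\tfrac{1}{p+1}w^{p+1}$, which is nonincreasing since $E'=-\beta(w')^2$), so it should be either cited or proved, not folded into ``the qualitative picture.'' The same remark applies to the finite-versus-infinite first-zero dichotomy, which you correctly label as classical. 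With those two facts supplied, your argument is complete and is essentially the proof the paper is implicitly relying on.
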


\begin{proof}[Proof of Theorem $\ref{thm1.5}$] 
Set $p = q_0/(q_0-1)$. 
Since $q_0 > q_{JL}$, we have $1 < p < p_{JL}$.
In this case, Lemma \ref{lem5.5} implies that, 
for any $\sigma_1 > \sigma_2 > 0$, $z(s, \sigma_1)-z(s, \sigma_2)$ has 
at least one zero for $s > 0$.
Take $S > 0$ such that $z(s, 1)-z(s, 1/2)$ has at least one zero in $(0, S)$.

First, we define $\beta(\alpha)$ by (\ref{eq5.2}) with $\sigma = 1$, that is, $\beta(\alpha) = \alpha$, 
and define $v_1(s, \alpha)$ by  
\begin{equation}
	v_1(s, \alpha) = G^{-1}(\lam^{-2}F(u(r, \alpha))) 
	\quad \mbox{with} \ 
	s = \frac{r}{\lam} \quad \mbox{and} \quad 
	\lam = \sqrt{\frac{F(\alpha)}{G(1)}}.
\label{eq5.25}
\end{equation}
Then, by Proposition \ref{prp5.1}, we have 
\begin{equation}
	v_1(s, \alp) \to z(s, 1) \quad \mbox{in} \ C[0, S] 
	\quad \mbox{as} \ \alpha \to 0.
\label{eq5.26}
\end{equation}
Next, define $\beta(\alpha)$ by (\ref{eq5.2}) with $\sigma = 1/2$, and define $v_2(s, \alpha)$ by 
\begin{equation}
	v_2(s, \alpha) = G^{-1}(\lam^{-2}F(u(r, \beta(\alpha)))) 
	\quad \mbox{with} \ 
	s = \frac{r}{\lam} \quad \mbox{and} \quad 
	\lam = \sqrt{\frac{F(\alpha)}{G(1)}}.
\label{eq5.27}
\end{equation}
By Proposition \ref{prp5.1}, we obtain 
\begin{equation}
	v_2(s, \alp) \to z(s, 1/2) \quad \mbox{in} \ C[0, S] 
	\quad \mbox{as} \ \alpha \to 0.
\label{eq5.28}
\end{equation}
Recall that $z(s, 1)-z(s, 1/2)$ has at least one zero in $(0, S)$. 
Then, from (\ref{eq5.26}) and (\ref{eq5.28}), 
$v_1(s, \alpha)-v_2(s, \alpha)$ has at least one zero in $(0, S)$ 
for sufficiently small $\alpha$. 
It follows from (\ref{eq5.25}) and (\ref{eq5.27}) that 
$u(r, \alpha)-u(r, \beta(\alpha))$ has at least one zero 
for sufficiently small $\alpha$. 
Lemma \ref{lem2.2} (ii) implies that 
$u(r, \alpha)$ is unstable for sufficiently small $\alpha$, 
and hence (I) holds in Theorem \ref{thm1.1}. 
Thus $u(r, \alpha)$ is unstable for all $\alpha > 0$. 
\end{proof}

\subsection{Proof of Theorem $1.6$} 

In this subsection, we assume that $N \geq 11$ and 
(f1) holds with $q_{\infty} > q_{JL} > 1$.
We prove Theorem \ref{thm1.6} by a similar argument to that in the proof of Theorem \ref{thm1.5}, 
replacing $\alpha \to 0$ by $\alpha \to \infty$. 
Define $g(u) = u^p$ with $p = q_{\infty}/(q_{\infty}-1)$, 
and consider the initial value problem (\ref{eq5.1}). 

Let $\sigma > 0$ be fixed. 
For $\alpha > 0$, define $\beta = \beta(\alpha) > 0$ by (\ref{eq5.2}).
For a solution $u(r, \alpha)$ of (\ref{eq1.4}), define $v(s, \alpha)$ by (\ref{eq5.3}).
Then, by Lemma \ref{lem3.1}(i), $v(s) = v(s, \alpha)$ satisfies 
$$
	\left\{
	\begin{array}{c}
	\dsp
	v''+\frac{N-1}{s}v'+g(v) + 
	\frac{f'(u(r, \beta))F(u(r, \beta))-q_{\infty}}{g(v(s))G(v(s))}v'(s)^2 = 0 \quad \mbox{for} \  s > 0,
	\\[3ex]
	v(0) = \sigma \quad \mbox{and} \quad v'(0) = 0.
	\end{array}
	\right.
$$
Since $F(u) \to 0$ as $u \to \infty$,
we have $\lam = \lam(\alpha) \to 0$ as $\alpha \to \infty$. 
We will show the following proposition.

\begin{proposition}\label{prp5.2}
For any $S \in (0, s_0(\sigma))$, one has 
$$
	v(s, \alp) \to z(s, \sigma) \quad \mbox{in} \ C[0, S] 
	\quad \mbox{as} \ \alpha \to \infty,
$$
where $z(s, \sigma)$ is the solution of $(\ref{eq5.1})$. 
\end{proposition}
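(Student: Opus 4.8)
The plan is to mirror the proof of Proposition~\ref{prp5.1} line by line, replacing the limit $\alpha \to 0$ throughout by $\alpha \to \infty$ and the constant $q_0$ by $q_{\infty}$. The setup is identical: with $\sigma > 0$ fixed, $\beta(\alpha)$ defined by (\ref{eq5.2}) and $v(s,\alpha)$ by (\ref{eq5.3}), Lemma~\ref{lem3.1}(i) shows that $v$ solves the displayed equation of this subsection, with $q_{\infty}$ in place of $q_0$. Fixing $\tilde{S} \in (S, s_0(\sigma))$, $\tau = z(\tilde{S}, \sigma)$ and $\gamma(\alpha)$ by (\ref{eq5.7}), I would again produce $r_{\gamma}$ with $\gamma = u(r_{\gamma}, \beta)$ and set $s_{\gamma} = \lam^{-1} r_{\gamma}$, obtaining $\gamma \le u(r,\beta) \le \beta$ on $[0, r_{\gamma}]$ and $\tau \le v \le \sigma$, $v_s \le 0$ on $[0, s_{\gamma}]$ exactly as in (\ref{eq5.9})--(\ref{eq5.11}).

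The one step that genuinely uses the new hypothesis is the analogue of Lemma~\ref{lem5.2}(i), and here the direction of the limit matters. From (\ref{eq5.2}) and (\ref{eq5.7}), the ratios $F(\beta)/F(\alpha)$ and $F(\gamma)/F(\alpha)$ are fixed positive constants, and since $F(\alpha) \to 0$ as $\alpha \to \infty$ we get $F(\beta), F(\gamma) \to 0$, hence $\beta(\alpha), \gamma(\alpha) \to \infty$. Consequently the lower bound in (\ref{eq5.9}) forces $u(r, \beta) \ge \gamma(\alpha) \to \infty$ uniformly on $[0, r_{\gamma}]$, so that (f2) yields $\sup_{s \in [0,s_{\gamma}]}|f'(u(r,\beta))F(u(r,\beta)) - q_{\infty}| \to 0$ as $\alpha \to \infty$. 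Since $g(v)G(v) = v/(p-1) \ge \tau/(p-1) > 0$ on $[0, s_{\gamma}]$ by (\ref{eq5.11}), the full quotient tends to $0$, which is the required analogue of (\ref{eq5.12}). With this in hand, parts (ii) and (iii) of Lemma~\ref{lem5.2} follow verbatim: the Riccati-type bound $0 \le -v_s \le C_1 s$ comes from Lemma~\ref{lem5.3} applied to $y = -v_s$, and integrating gives $\liminf_{\alpha \to \infty} s_{\gamma(\alpha)} > 0$.

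The remaining two ingredients transfer without change. The compactness argument of Lemma~\ref{lem5.4}---the uniform bounds (\ref{eq5.17})--(\ref{eq5.18}), the vanishing of the perturbation term (\ref{eq5.19}), and the Ascoli--Arzel\'{a} passage to a limit solving (\ref{eq5.1})---uses only the estimates just established and applies to any sequence $\alpha_n \to \infty$. Finally the contradiction argument bounding $S_{\infty} = \liminf_{n} s_{\gamma_n}$ below by $S$ is reproduced verbatim, again invoking the monotonicity of $z(\cdot, \sigma)$ and of $v(\cdot, \alpha_n)$. The main point to verify, and essentially the only novelty, is thus the reversal of the limiting regime described above: one must confirm that as $\alpha \to \infty$ the relevant solution values blow up, so that the defect $f'(u)F(u) - q_{\infty}$ is controlled by (f2) rather than by (f1). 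I expect no further obstacle, since the change $\lam \to 0$ enters only through $s_{\gamma} = \lam^{-1} r_{\gamma}$, for which the crude bound $\liminf_{\alpha\to\infty} s_{\gamma} > 0$ is all that is needed.
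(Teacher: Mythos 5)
Your proposal is correct and follows essentially the same route as the paper: the authors likewise reduce everything to re-verifying the analogue of Lemma \ref{lem5.2}(i), namely that $\gamma(\alpha)\to\infty$ forces $u(r,\beta)\ge\gamma(\alpha)\to\infty$ on $[0,r_\gamma]$ so that (f2) gives the uniform vanishing of $f'(u)F(u)-q_\infty$, and then declare the remaining steps (Lemmas \ref{lem5.2}(ii)--(iii), \ref{lem5.4}, and the $\liminf$ argument) identical to those in the proof of Proposition \ref{prp5.1}. You have correctly isolated the single point where the limiting regime changes, which is exactly what the paper does before omitting the rest of the proof.
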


Take any $\tilde{S} \in (S, s_0(\sigma))$, and let  $\tau = z(\tilde{S}, \sigma)$. 
Define $\gamma = \gamma(\alpha) > 0$ by (\ref{eq5.7}). 
Then there exists $r_{\gamma} > 0$ satisfying (\ref{eq5.8}) and we have (\ref{eq5.9}).
Define $s_{\gamma} = \lam^{-1}r_{\gamma}$. 
Then we obtain (\ref{eq5.11}).

We will show that 
\begin{equation}
	\sup_{s \in [0, s_{\gamma}]}
	\frac{|f'(u(r, \beta))F(u(r, \beta))-q_{\infty}|}{g(v(s, \alpha))G(v(s, \alpha))} \to 0 
	\quad \mbox{as} \ \alpha \to \infty.
\label{eq5.29}
\end{equation}
In fact, from (\ref{eq5.7}) we see that $\gamma(\alpha) \to \infty$ as $\alpha \to \infty$.
From (\ref{eq5.9}) we have, for any $r \in [0, r_{\gamma}]$,
$$
	u(r, \beta) \geq \gamma(\alpha) \to \infty
	\quad \mbox{as} \ \alpha \to 0.
$$ 
The interval $0 \leq s \leq s_{\gamma}$ corresponds to $0 \leq r \leq r_{\gamma}$. 
Then, from (\ref{eq1.6}) we obtain 
$$
	\sup_{s \in [0, s_{\gamma}]}|f'(u(r, \beta))F(u(r, \beta))-q_{\infty}| \to 0 
	\quad \mbox{as} \ \alpha \to \infty.
$$
From (\ref{eq5.11}) we have 
$$
	g(v(s, \alpha))G(v(s, \alpha)) = \frac{v(s, \alpha)}{p-1} \geq \frac{\tau}{p-1} > 0 
	\quad \mbox{for} \ 0 \leq s \leq s_{\gamma}. 
$$
Thus we obtain (\ref{eq5.29}).

Since we can prove Proposition \ref{prp5.2} by a similar argument to that 
in the proof Proposition \ref{prp5.1}, 
we omit the proof of Proposition \ref{prp5.2}.

\begin{proof}[Proof of Theorem $\ref{thm1.6}$] 
Set $p = q_{\infty}/(q_{\infty}-1)$. 
Since $q_{\infty} > q_{JL}$, we have $1 < p < p_{JL}$.
In this case, Lemma \ref{eq5.5} implies that, 
for any $\sigma_1 > \sigma_2 > 0$, $z(s, \sigma_1)-z(s, \sigma_2)$ has 
at least one zero for $s > 0$.
By a similar argument to that in the proof of Theorem \ref{thm1.5}, 
we conclude that $u(r, \alpha)$ is unstable for sufficiently large $\alpha$. 
\end{proof}


\section{Proof of Theorems \ref{thm1.8} and \ref{thm1.9}}

First we will show the following lemma.

\begin{lemma}\label{lem6.1}
Let $u(r, \alpha)$ be a solution of $(1.4)$ with $\alpha > 0$. 
Assume that $r_0(\alpha) = \infty$, that is, $u(r, \alpha) > 0$ for all $r \geq 0$.
Then $F(u(r, \alpha)) \geq r^2/(2N)$ for $r \geq 0$. 
\end{lemma}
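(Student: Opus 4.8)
The plan is to turn the stated lower bound into a differential inequality for the composite function $H(r) := F(u(r,\alpha))$ and then integrate it twice. First I would record the qualitative behaviour of $u(r,\alpha)$: rewriting the equation in $(\ref{eq1.4})$ as $(r^{N-1}u')' = -r^{N-1}f(u) < 0$ and using $u'(0)=0$ shows that $r^{N-1}u'(r) \le 0$, so $u(\cdot,\alpha)$ is nonincreasing; since we assume $r_0(\alpha)=\infty$, we also have $u(r,\alpha)>0$ and hence $f(u(r,\alpha))>0$ for all $r\ge 0$. In particular $H(r)=F(u(r,\alpha))$ is well defined and, because $F'(u)=-1/f(u)$, its derivative is $H'(r) = -u'(r)/f(u(r,\alpha)) \ge 0$.

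The heart of the argument is a second differentiation. Using $F'(u)=-1/f(u)$ together with $u'' = -\frac{N-1}{r}u' - f(u)$ from $(\ref{eq1.4})$, I would compute
$$
	H''(r) = -\frac{u''}{f(u)} + \frac{f'(u)(u')^2}{f(u)^2} = 1 - \frac{N-1}{r}H'(r) + f'(u)\,H'(r)^2,
$$
where in the last step I substituted $u'/f(u) = -H'(r)$. Since $f'(u)>0$ and $H'(r)^2\ge 0$, the final term is nonnegative and may be discarded, which leaves the clean inequality
$$
	H'' + \frac{N-1}{r}H' \ge 1, \qquad \mbox{i.e.} \qquad (r^{N-1}H')' \ge r^{N-1}.
$$

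Finally I would integrate twice. Because $u'(0)=0$ and $f(u(0,\alpha))=f(\alpha)>0$, one has $H'(r)\to 0$ and hence $r^{N-1}H'(r)\to 0$ as $r\to 0$; integrating $(r^{N-1}H')'\ge r^{N-1}$ from $0$ to $r$ then gives $r^{N-1}H'(r)\ge r^N/N$, that is $H'(r)\ge r/N$. A second integration from $0$ to $r$, together with $H(0)=F(\alpha)\ge 0$, yields
$$
	F(u(r,\alpha)) = H(r) \ge F(\alpha) + \frac{r^2}{2N} \ge \frac{r^2}{2N},
$$
which is the claim. The argument is essentially the single observation that the positive nonlinear term $f'(u)H'^2$ can be thrown away; the only point requiring a little care is the vanishing of the boundary term $r^{N-1}H'(r)$ as $r\to 0$ in the first integration, and this is controlled precisely by the initial condition $u'(0)=0$.
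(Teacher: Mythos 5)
Your proof is correct and, at its core, is the same argument as the paper's: both reduce the claim to the pointwise bound $-u'(r)/f(u(r))\ge r/N$ (equivalently $-r^{N-1}u'(r)\ge \tfrac{r^N}{N}f(u(r))$) and then integrate once more to get $F(u(r,\alpha))\ge F(\alpha)+r^2/(2N)$. The only difference is bookkeeping: you obtain that bound by writing a second-order differential inequality for $H(r)=F(u(r,\alpha))$ and discarding the nonnegative term $f'(u)(u')^2/f(u)^2$, while the paper gets it directly from the integral representation $-r^{N-1}u'=\int_0^r s^{N-1}f(u(s))\,ds$ together with the monotonicity of $u$ and $f$ — both steps ultimately rest on $f'\ge 0$, and your handling of the boundary term $r^{N-1}H'(r)\to 0$ is sound.
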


\begin{proof}
Integrating the equation in  (\ref{eq1.4}) on $[0, r]$, we obtain 
\begin{equation}
	-u'(r, \alpha) = \frac{1}{r^{N-1}}\int^r_0 s^{N-1}f(u(s, \alpha))ds < 0, 
	\quad r > 0.
\label{eq6.1}
\end{equation}
By the monotonicity of $u$ and $f$, we obtain 
$$
	-r^{N-1}u'(r, \alpha) = \int^r_0 s^{N-1}f(u(s, \alpha))ds  
	\geq f(u(r, \alpha))\int^r_0 s^{N-1}ds = \frac{r^N}{N}f(u(r, \alpha)), 
$$
which implies that 
$$
	-\frac{u'(r, \alpha))}{f(u(r, \alpha))} \geq \frac{r}{N}.
$$
Integrating the above on $[0, r]$, we obtain 
$$
	F(u(r, \alpha)) > F(u(r, \alpha))-F(\alpha) \geq \frac{r^2}{2N}.
$$ 
\end{proof}

\begin{proof}[Proof of Theorem $\ref{thm1.8}$]
Since (II) holds in Theorem \ref{thm1.1}, 
$u(r, \alpha) > 0$ for all $r \geq 0$ and $u(r, \alpha)$ is monotone increasing in $\alpha > 0$. 
Then, by Lemma \ref{lem6.1}, there exists $u^*(r)$ such that 
$$
	\lim_{\alpha \to \infty}u(r, \alpha) = u^*(r) \leq F^{-1}(r^2/2N) 
	\quad \mbox{for} \ r > 0.
$$
Since $u(r, \alpha) > 0$ for $r \geq 0$, $u^*(r) > 0$ for all $r > 0$.
For any $\alpha > 0$, there exists $r_{\alpha} > 0$ such that 
$u(r, \alpha+1) \geq \alpha$ for $0 < r \leq r_{\alpha}$.
Since $u^*(r) > u(r, \alpha+1)$, we have $u^*(r) > \alpha$ for $0 < r \leq r_{\alpha}$, 
which implies that $u^*(r) \to \infty$ as $r \to 0$.

We will show that $u^*(r)$ solves (\ref{eq1.12}) and satisfies    
\begin{equation}
	\int^r_0 s^{N-1}f(u^*(s))ds < \infty
	\quad \mbox{for} \ r > 0.
\label{eq6.2}
\end{equation}
In fact, integrating (\ref{eq6.1}) on $[r, r_0]$, we obtain 
$$
	\begin{array}{rcl}
	-u(r_0, \alpha) + u(r, \alpha) & = & \dsp
	 \int^{r_0}_r s^{1-N}\int^s_0 t^{N-1}f(u(t, \alpha))dtds.
	\\[2ex]
	 & = & \dsp
	 -\int^{r_0}_r\frac{d}{ds}\left(\frac{s^{2-N}-r_0^{2-N}}{N-2}\right)
	 \int^s_0 t^{N-1}f(u(t, \alpha))dtds.
	\end{array}
$$
Integrating by parts, we obtain 
$$
	\begin{array}{rcl}
	-u(r_0, \alpha) + u(r, \alpha) & = &
	\dsp
	\frac{r^{2-N}-r_0^{2-N}}{N-2}\int^{r}_0 s^{N-1}f(u(s, \alpha))ds 
	\\[2ex]
	& & \dsp
	+ \int^{r_0}_r 
	\frac{s^{2-N}-r_0^{2-N}}{N-2}s^{N-1}f(u(s, \alpha))ds.
	\end{array}
$$
Letting $\alpha \to \infty$, by the monotone convergence theorem, we obtain 
$$
	-u^*(r_0) + u^*(r) = 
	\frac{r^{2-N}-r_0^{2-N}}{N-2}\int^{r}_0 s^{N-1}f(u^*(s))ds 
	+ \int^{r_0}_r \frac{s^{2-N}-r_0^{2-N}}{N-2}s^{N-1}f(u^*(s))ds.
$$
This implies that $u^*(r)$ satisfies (\ref{eq1.12}) and (\ref{eq6.2}) for $0 < r \leq r_0$. 
Since $r_0 > 0$ is arbitrary, $u^*(r)$ solves (\ref{eq1.12}) for all $r > 0$.  

Take any $\phi \in C^1_0(\R^N)$. 
Since $u(r, \alpha)$ is stable for $\alpha > 0$, we obtain 
$$
	\int_{\R^N}|\nabla \phi|^2 dx 
	\geq 
	\int_{\R^N}f'(u(|x|, \alpha))\phi^2 dx. 
$$
Letting $\alpha \to \infty$, by the monotone convergence theorem, we obtain 
$$
	\int_{\R^N}|\nabla \phi|^2 dx 
	\geq 
	\int_{\R^N}f'(u^*(|x|))\phi^2 dx. 
$$
Thus $f'(u^*(|\cdot|)) \in L^1_{\rm loc}(\R^N)$ and (\ref{eq1.14}) holds. 
\end{proof}

To prove Theorem \ref{thm1.9}, we need a series of some lemmas.

\begin{lemma}\label{lem6.2}
Assume that $(\ref{eq1.15})$ holds. 
Then, there exists a constant $C > 0$ such that 
\begin{equation}
	F(u) \leq Cu^{-\frac{4}{N-2}} 
	\quad \mbox{for sufficiently large $u$.}
\label{eq6.3}
\end{equation}
\end{lemma}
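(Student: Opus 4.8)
The plan is to exploit the product $f(u)F(u)$ together with the elementary identity $F'(u) = -1/f(u)$, which follows directly from the definition $F(u) = \int_u^\infty ds/f(s)$. First I would compute
\[
	\frac{d}{du}\bigl(f(u)F(u)\bigr) = f'(u)F(u) + f(u)F'(u) = f'(u)F(u) - 1,
\]
since $f(u)F'(u) = f(u)\cdot(-1/f(u)) = -1$. Hypothesis $(\ref{eq1.15})$ then gives a threshold $u_1 > 0$ such that $f'(u)F(u) \le q_S$ for all $u \ge u_1$, whence the derivative of $fF$ is bounded above by the constant $q_S - 1 = (N-2)/4$, which is strictly positive because $N \ge 3$.

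Next I would integrate this differential inequality from $u_1$ to $u$ to obtain a linear-growth bound $f(u)F(u) \le A u + B$ for $u \ge u_1$, where $A = q_S - 1 = (N-2)/4$ and $B$ is a constant depending only on $u_1$. Rewriting this as $-F'(u)/F(u) = 1/\bigl(f(u)F(u)\bigr) \ge 1/(Au+B)$, I would then integrate the resulting inequality for the logarithmic derivative once more, from $u_1$ to $u$:
\[
	\log\frac{F(u_1)}{F(u)} \ge \int_{u_1}^{u}\frac{dt}{At+B} = \frac{1}{A}\log\frac{Au+B}{Au_1+B}.
\]
Exponentiating yields $F(u) \le F(u_1)\,(Au_1+B)^{1/A}\,(Au+B)^{-1/A}$. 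Finally, since $A > 0$ and $Au + B \ge \tfrac{A}{2}\,u$ for all sufficiently large $u$, I would absorb all constants into a single $C > 0$ and conclude $F(u) \le C\,u^{-1/A} = C\,u^{-4/(N-2)}$, using that $1/A = 1/(q_S - 1) = 4/(N-2)$; this is exactly $(\ref{eq6.3})$.

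The delicate point I expect to be the main obstacle is obtaining the \emph{sharp} exponent $4/(N-2)$ rather than a weaker one. A careless estimate that replaces $Au + B$ by a slightly larger linear term $(A+\varepsilon)u$ before the second integration would produce the decay rate $u^{-1/(A+\varepsilon)}$, which is strictly weaker than what is claimed. The trick is that the additive constant $B$ must be carried through the integral $\int_{u_1}^{u} dt/(At+B)$ intact: that integration generates the factor $1/A$ in the exponent regardless of $B$, so that $B$ influences only the multiplicative constant $C$ and not the power of $u$. The positivity $q_S > 1$ is what makes $A > 0$ and the logarithmic integral diverge, and hence drives the polynomial decay of $F$.
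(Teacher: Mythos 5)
Your argument is correct and yields the sharp exponent, but it proceeds along a genuinely different route from the paper. You differentiate the product $f(u)F(u)$ itself, obtaining $\frac{d}{du}\bigl(f(u)F(u)\bigr) = f'(u)F(u)-1 \le q_S-1$, integrate once to get the linear bound $f(u)F(u)\le Au+B$ with $A=q_S-1=(N-2)/4$, and then integrate the logarithmic derivative $-(\log F)'(u)=1/\bigl(f(u)F(u)\bigr)\ge 1/(Au+B)$ a second time; the exponent $1/A=4/(N-2)$ emerges from the second integration. The paper instead works with the auxiliary function $f(u)F(u)^{q_S}$, whose derivative equals $F(u)^{q_S-1}\bigl(f'(u)F(u)-q_S\bigr)\le 0$, so that $f(u)F(u)^{q_S}\le C$; this converts directly into the separable inequality $-F'(u)F(u)^{-q_S}\ge 1/C$, and a single integration gives $F(u)^{1-q_S}\gtrsim u$, i.e.\ the same decay rate. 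Your two-step version is slightly more elementary (no power of $F$ in the auxiliary quantity, only a logarithm in the second integration), while the paper's choice of $fF^{q_S}$ compresses the argument into one monotonicity statement plus one integration; both carry the additive constants correctly so that they affect only the multiplicative constant $C$ and not the exponent. Your verification that $Au_1+B=f(u_1)F(u_1)>0$ and that $Au+B\ge f(u)F(u)>0$ on the whole range (so the reciprocal inequality and the logarithms are legitimate) closes the only point at which the argument could have gone wrong.
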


\begin{proof}
From (\ref{eq1.15}) there exists $u_0 \geq 0$ such that 
$f'(u)F(u) \leq q_S$ for $u \geq u_0$. 
Then it follows that 
$$
	\frac{d}{du}\left(f(u)F(u)^{q_S}\right) 
	= F(u)^{q_S-1}\left(f'(u)F(u)-q_S\right) 
	\leq 0 \quad \mbox{for} \ u \geq u_0.
$$
Thus $f(u)F(u)^{q_S}$ is nonincreasing for $u \geq u_0$, 
and hence we obtain $f(u)F(u)^{q_S} \leq C$ for $u \geq u_0$  
with $C = f(u_0)F(u_0)^{q_S} > 0$.  
This implies that 
$$
	-F'(u)F(u)^{-q_S} \geq \frac{1}{C} \quad \mbox{for} \ u \geq u_0.
$$
Integrating the above on $[u_0, u]$, we obtain 
$$
	\frac{1}{q_S-1}
	\left(F(u)^{1-q_S} - F(u_0)^{1-q_S}\right)
	= -\int^u_{u_0} F'(u)F(u)^{-q_S}du \geq \frac{1}{C}(u-u_0).
$$
Then we obtain (\ref{eq6.3}) for some constant $C > 0$. 
\end{proof}

\begin{lemma}\label{lem6.3}
Assume that $(\ref{eq1.15})$ holds. 
Let $u^*(r)$ be a singular radial solution of $(\ref{eq1.1})$. 
Then 
\begin{equation}
	u^*(r) = O(r^{-\frac{N-2}{2}}) \quad 
	\mbox{as} \ r \to 0.
\label{eq6.4}
\end{equation}
\end{lemma}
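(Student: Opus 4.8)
The plan is to obtain the decay rate by squeezing $u^*$ between two estimates for $F(u^*(r))$: a \emph{lower} bound $F(u^*(r)) \ge r^2/(2N)$, valid for every singular solution, which I would prove by adapting the argument of Lemma \ref{lem6.1} to the singular setting; and the \emph{upper} bound $F(u^*(r)) \le C\,(u^*(r))^{-4/(N-2)}$ supplied by Lemma \ref{lem6.2} under hypothesis (\ref{eq1.15}). Combining the two for small $r$ and solving for $u^*(r)$ will give exactly (\ref{eq6.4}). Note in advance that (\ref{eq1.15}) enters only through Lemma \ref{lem6.2}; the lower bound holds for any singular solution.

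For the lower bound I would first record the monotonicity near the origin. Writing (\ref{eq1.12}) as $(r^{N-1}(u^*)')' = -r^{N-1}f(u^*) < 0$, the quantity $h(r) := r^{N-1}(u^*)'(r)$ is strictly decreasing, so $L := \lim_{r\to 0^+}h(r)$ exists. Since $u^*(r)\to\infty$ as $r\to 0$, the function $u^*$ cannot be increasing on any interval $(0,\delta)$, which forces $L\le 0$; and $h$ decreasing gives $L>-\infty$, so $L$ is finite. Integrating $(r^{N-1}(u^*)')' = -r^{N-1}f(u^*)$ down to the origin then yields both the local integrability $\int_0^r s^{N-1}f(u^*(s))\,ds = L - h(r) < \infty$ and the representation $-r^{N-1}(u^*)'(r) = -L + \int_0^r s^{N-1}f(u^*(s))\,ds$; in particular $(u^*)' < 0$, so $u^*$ is strictly decreasing. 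From here the computation of Lemma \ref{lem6.1} transcribes almost verbatim: using $-L \ge 0$ and $f(u^*(s)) \ge f(u^*(r))$ for $s\le r$ gives $-r^{N-1}(u^*)'(r) \ge \tfrac{r^N}{N} f(u^*(r))$, hence $\frac{d}{dr}F(u^*(r)) = -\frac{(u^*)'(r)}{f(u^*(r))} \ge \frac{r}{N}$, and integrating on $(0,r)$ together with $F(u^*(r)) \to F(\infty) = 0$ as $r\to 0$ produces $F(u^*(r)) \ge r^2/(2N)$.

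The step I expect to require the most care is precisely this passage to the origin—justifying the finiteness of $L$, the convergence $\int_0^r s^{N-1}f(u^*)\,ds < \infty$, and $F(u^*(0^+)) = 0$—since, unlike in Lemma \ref{lem6.1}, both $u^*$ and $f(u^*)$ blow up as $r\to 0$ and one cannot start the integration from a regular center. Once the lower bound is in hand, the conclusion is immediate: for $r$ small enough that $u^*(r)$ exceeds the threshold of Lemma \ref{lem6.2}, one combines $r^2/(2N) \le F(u^*(r)) \le C\,(u^*(r))^{-4/(N-2)}$ to obtain $(u^*(r))^{4/(N-2)} \le 2NC/r^2$, that is $u^*(r) \le (2NC)^{(N-2)/4}\,r^{-(N-2)/2}$, which is (\ref{eq6.4}).
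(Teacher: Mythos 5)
Your proof is correct and follows essentially the same route as the paper: both arguments squeeze $F(u^*(r))$ between the lower bound $r^2/(2N)$ and the upper bound $C\,u^*(r)^{-4/(N-2)}$ from Lemma \ref{lem6.2} and solve for $u^*(r)$. The only difference is that the paper obtains the lower bound $F(u^*(r)) \geq r^2/(2N)$ by citing Lemma 2.3 of \cite{MiNab}, whereas you supply a self-contained (and correct) derivation of it by carrying the argument of Lemma \ref{lem6.1} over to the singular solution, with the needed care at the origin (finiteness of $\lim_{r\to 0^+} r^{N-1}(u^*)'(r)$ and $F(u^*(r))\to 0$).
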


\begin{proof}
By Lemma \ref{lem2.3} in \cite{MiNab}, we obtain 
$$
	F(u^*(r)) \geq \frac{r^2}{2N} 
	\quad \mbox{for}  \ 0 < r \leq r_0
$$
with some $r_0 > 0$. 
By Lemma \ref{lem6.2}, there exists a constant $C > 0$ such that 
$$
	C u^*(r)^{-\frac{4}{N-2}} \geq 
	F(u^*(r)) \geq \frac{r^2}{2N}
	\quad \mbox{for}  \ 0 < r \leq r_1
$$
with some $r_1 \leq r_0$. 
Thus we obtain (\ref{eq6.4}). 
\end{proof}

Let $u^*(r)$ be the singular radial solution of (\ref{eq1.1}) satisfying 
(\ref{eq1.13}) and (\ref{eq1.14}), and let us consider the equation 
\begin{equation}
	\phi'' + \frac{N-1}{r}\phi' + f'(u^*(r))\phi = 0  \quad 
	\mbox{for} \ r > 0.
\label{eq6.5}
\end{equation}

\begin{lemma}\label{lem6.4}
Let $u^*(r)$ be the singular radial solution of $(\ref{eq1.1})$ 
satisfying $(\ref{eq1.13})$ and $(\ref{eq1.14})$.
Then, for any $r_0 > 0$, there exists a solution $\phi$ of $(\ref{eq6.5})$ satisfying 
$\phi(r) > 0$ for $0 < r \leq r_0$. 
\end{lemma}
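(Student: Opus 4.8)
The plan is to reduce the statement to a \emph{disconjugacy} property of the radial linear equation \eqref{eq6.5} on a slightly larger interval, and then to extract a positive solution as a normalized limit. Fix $R > r_0$. For radial $\phi$ one has $\int_{\R^N}(|\nabla\phi|^2 - f'(u^*(|x|))\phi^2)\,dx = c_N\int_0^{\infty} r^{N-1}((\phi')^2 - f'(u^*(r))\phi^2)\,dr$ with a positive constant $c_N$, so the stability assumption \eqref{eq1.14} says exactly that the one--dimensional quadratic form
$$
	Q(\phi) = \int_0^{R} r^{N-1}\big((\phi')^2 - f'(u^*(r))\phi^2\big)\,dr
$$
is nonnegative for every Lipschitz $\phi$ with compact support in $(0,R)$. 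Indeed such $\phi$, extended radially, lie in the $H^1$--closure of $C_0^1(\R^N)$, while $f'(u^*(\cdot))$ is continuous on $(0,\infty)$ (since $u^* \in C^2(0,\infty)$ and $f \in C^2(\R\setminus\{0\})$), so $Q$ is finite and stable under $H^1$--convergence. I would use only this and the continuity of $f'(u^*(\cdot))$ on $(0,\infty)$; the singular endpoint $r=0$ is never touched directly.

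First I would prove that \eqref{eq6.5} is \emph{disconjugate} on $(0,R)$, i.e.\ no nontrivial solution has two zeros there. Suppose a solution $\psi$ of \eqref{eq6.5} vanished at $a<b$ in $(0,R)$. Writing \eqref{eq6.5} as $(r^{N-1}\psi')' + r^{N-1}f'(u^*)\psi = 0$ and integrating by parts over $[a,b]$ against the function $\eta$ equal to $\psi$ on $[a,b]$ and $0$ elsewhere, the boundary terms vanish and the equation gives $Q(\eta) = \int_a^b r^{N-1}((\psi')^2 - f'(u^*)\psi^2)\,dr = 0$. Thus $\eta$ minimizes $Q$ (which is $\ge 0$), so it is an eigenfunction with eigenvalue $0$ and, by elliptic regularity, a classical solution of \eqref{eq6.5} across $a$; but $\eta\equiv 0$ for $r<a$ forces $\eta'(a)=0$, whereas $\psi'(a)\neq 0$ at a simple zero of the nontrivial solution $\psi$, a contradiction. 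Hence \eqref{eq6.5} is disconjugate on $(0,R)$.

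Next I would build the positive solution. Take $\epsilon_n\downarrow 0$ and let $\phi_n$ solve \eqref{eq6.5} on $(0,R)$ with $\phi_n(\epsilon_n)=0$ and $\phi_n'(\epsilon_n)=1$; by disconjugacy $\phi_n>0$ on $(\epsilon_n,R)$, so $\phi_n(r_0)>0$ for large $n$, and I normalize so that $\phi_n(r_0)=1$. On any compact $K\subset(0,R)$ the potential $f'(u^*)$ is bounded, hence positive solutions of \eqref{eq6.5} satisfy a Harnack inequality on $K$ (equivalently, their logarithmic derivative stays bounded); with the normalization this gives uniform two--sided bounds for $\phi_n$ on $K$, and then \eqref{eq6.5} bounds $\phi_n''$, yielding $C^1_{\mathrm{loc}}(0,R)$ compactness. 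By Arzel\`a--Ascoli a subsequence converges to a solution $\phi$ of \eqref{eq6.5} with $\phi\ge 0$ and $\phi(r_0)=1$. If $\phi$ vanished at an interior point it would attain a minimum there with vanishing derivative, forcing $\phi\equiv 0$ by ODE uniqueness and contradicting $\phi(r_0)=1$; therefore $\phi>0$ on $(0,R)$, and restricting to $(0,r_0]\subset(0,R)$ gives the claim.

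The main obstacle is precisely the behavior near the singular endpoint $r=0$, where $f'(u^*(r))\to\infty$: one cannot set up a genuine Dirichlet eigenvalue problem on the whole ball with this potential, nor expect it to be Hardy--subcritical, so positivity near the origin is not automatic and must be forced by stability. The device above circumvents this by testing \eqref{eq1.14} only with functions supported away from the origin, and by normalizing the approximating solutions at the fixed interior radius $r_0$ so that no mass escapes to $r=0$ in the limit; this is what keeps the limiting solution strictly positive on all of $(0,R)$. Note that \eqref{eq1.15} and the decay estimates of Lemmas \ref{lem6.2}--\ref{lem6.3} are not needed for this particular step.
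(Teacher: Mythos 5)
Your proof is correct, but both of its halves run differently from the paper's. For the key step --- no nontrivial solution of \eqref{eq6.5} has two zeros --- the paper perturbs the potential: it replaces $f'(u^*)$ by $\lambda f'(u^*)$ with $\lambda<1$ close to $1$, uses continuous dependence on $\lambda$ to retain a solution $\phi_\lambda$ vanishing at two points, and then the identity $\int|\nabla\phi_\lambda|^2=\lambda\int f'(u^*)\phi_\lambda^2$ produces a test function with \emph{strictly} negative energy, contradicting \eqref{eq1.14} outright. You instead keep the potential fixed, observe $Q(\eta)=0$ for the truncated solution, and then must invoke the first-variation/corner-smoothing argument ($\eta$ is a null minimizer of a nonnegative form, hence a weak and so $C^1$ solution across $a$, contradicting $\psi'(a)\neq 0$ at a simple zero); this is the other classical route to ``nonnegative quadratic form implies disconjugacy'' and is equally valid, at the cost of the extra Euler--Lagrange step that the paper's strict inequality avoids. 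For producing the positive solution, the paper simply takes the solution with data $\phi(r_0+1)=0$, $\phi'(r_0+1)=-1$: having at most one zero, it is positive on all of $(0,r_0]$. Your limiting construction (solutions vanishing at $\epsilon_n\downarrow 0$, normalized at $r_0$, Harnack bounds plus Arzel\`a--Ascoli) reaches the same conclusion with considerably more machinery than necessary. Both of your steps are sound; just be explicit, in the disconjugacy argument, that you only vary $Q$ against perturbations supported in a compact subinterval of $(0,R)$ bounded away from $r=0$ (which already yields $\psi'(a)=\psi'(b)=0$), since extending \eqref{eq1.14} to test functions concentrated near the singular origin is precisely what must be avoided.
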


\begin{proof}
First we will show that any solution $\phi$ of (\ref{eq6.5}) has at most one zero in $(0, \infty)$. 
Assume by contradiction that (\ref{eq6.5}) has a solution $\hat{\phi}$ 
which has at least two zeros $r_1 < r_2$ in $(0, \infty)$. 
Let us consider the initial value problem 
\begin{equation}
	\left\{
	\begin{array}{c}
	\dsp 
	\phi'' + \frac{N-1}{r}\phi' + \lam f'(u^*(r))\phi = 0  \quad 
	\mbox{for} \ r > 0, 
	\\[2ex]
	\phi(r_1) = 0 \quad \mbox{and} \quad \phi'(r_1) = 1,
	\end{array}
	\right.
\label{eq6.6}
\end{equation}
where $\lam \in (0, 1]$ is a parameter, 
and we denote by $\phi_{\lam}$ a unique solution of (\ref{eq6.6}). 
In the case $\lam = 1$, $\phi_{\lam}(r)$ is the constant multiple of $\hat{\phi}$, 
and $\phi_{\lam}$ satisfies $\phi_{\lam}(r_2) = 0$. 
By the continuous dependence of solutions on the parameter $\lam$, 
$\phi_{\lam}$ has at least one zero in $(r_1, r_2 + 1)$ 
if $\lam \in (0, 1)$ is sufficiently close to $1$. 
We denote by $r_{\lam}$ the first zero of $\phi_{\lam}(r)$ for $r > r_1$, that is, 
$\phi_{\lam}(r) > 0$ for $r_1 < r < r_{\lam}$ and $\phi_{\lam}(r_{\lam}) = 0$.
Extend $\phi_{\lam}$ as $\phi_{\lam}(r) \equiv 0$ for $0 < r < r_1$ and $r > r_{\lam}$. 
Then $\phi_{\lam}(|x|)$ with $x \in \R^N$ satisfies $\phi_{\lam}(|\cdot|) \in H^1(\R^N)$ and 
$$
	\Delta \phi_{\lam} + \lam f'(u^*(|x|))\phi_{\lam} = 0
	\quad \mbox{in} \ r_1 < |x| < r_{\lam}.
$$
Multiplying the both sides of the above by $\phi_{\lam}$, and integrating it on $\R^N$, 
we obtain 
$$
	\int_{\R^N}|\nabla \phi_{\lam}|^2 dx = 
	\lam \int_{\R^N} f'(u^*(|x|))\phi_{\lam}^2 dx
	< \int_{\R^N}f'(u^*(|x|))\phi_{\lam}^2 dx.
$$
Since $C^1_0(\R^N)$ is dense in $H^1(\R^N)$, 
there exists $\psi \in C^1_0(\R^N)$ satisfying 
$$
	\int_{\R^N}(|\nabla \psi|^2 - f'(u^*(|x|))\psi^2) dx < 0, 
$$
which contradicts (\ref{eq1.14}). 
Thus, any solution of (\ref{eq6.5}) has at most one zero in $(0, \infty)$.

Let us consider a solution $\phi$ of (\ref{eq6.5}) satisfying 
$$
	\phi(r_0 + 1) = 0 \quad \mbox{and} \quad \phi'(r_0+1) = -1.
$$
Since $\phi(r)$ as at most one zero in $(0, \infty)$, 
the solution $\phi$ satisfies $\phi(r) > 0$ for $0 < r \leq r_0$.
\end{proof}

We recall the comparison lemma in \cite[Proposition 4.1]{MiNaa}.

\begin{lemma}\label{lem6.5}
Let us consider two differential equations 
\begin{equation}
	u'' + \frac{N-1}{r}u' + V(r)u = 0, \quad 0 < r < R,
\label{eq6.7}
\end{equation}
\begin{equation}
	v'' + \frac{N-1}{r}v' + \widehat{V}(r)v = 0, \quad 0 < r < R,
\label{eq6.8}
\end{equation}
where $V, \widehat{V} \in C(0, R]$ satisfy 
$V(r) < \widehat{V}(r)$ for $0 < r \leq R$. 
Assume that $(\ref{eq6.7})$ has a solution $u_0$ 
satisfying $u_0(r) > 0$ for $0 < r < R$, $u_0(R) = 0$ and 
$$
	\int_0\frac{dr}{r^{N-1}u_0(r)^2} = \infty.
$$
Then any solution $v$ of $(\ref{eq6.8})$ has at least one zero in $(0, R)$.
\end{lemma}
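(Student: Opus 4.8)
The plan is to run a Sturm-type comparison argument in self-adjoint form, using the integral condition on $u_0$ solely to control the singular endpoint $r=0$. First I would rewrite $(\ref{eq6.7})$ and $(\ref{eq6.8})$ in divergence form as $(r^{N-1}u')' + r^{N-1}Vu = 0$ and $(r^{N-1}v')' + r^{N-1}\widehat{V}v = 0$, and argue by contradiction: suppose some solution $v$ of $(\ref{eq6.8})$ has no zero in $(0,R)$. Then $v$ has constant sign there, so I may assume $v(r) > 0$ for $0 < r < R$, and by continuity $v(R) \geq 0$.

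Next I would introduce the Wronskian-type quantity $W(r) = r^{N-1}(u_0'(r)v(r) - u_0(r)v'(r))$. Multiplying the $u_0$-equation by $v$, the $v$-equation by $u_0$, and subtracting gives $W'(r) = r^{N-1}(\widehat{V}(r) - V(r))\,u_0(r)v(r)$, which is \emph{strictly positive} on $(0,R)$ because $\widehat{V} > V$ and $u_0, v > 0$ there; hence $W$ is strictly increasing. At the right endpoint $u_0(R) = 0$ forces $W(R) = R^{N-1}u_0'(R)v(R)$, and since $u_0 > 0$ on $(0,R)$ with $u_0(R) = 0$ we have $u_0'(R) \leq 0$, while $v(R) \geq 0$; thus $W(R) \leq 0$. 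Combining this with strict monotonicity yields $W(r) < W(R) \leq 0$ for every $r \in (0,R)$.

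The final step extracts the contradiction from the behavior at $r = 0$. Writing $w = v/u_0$, a direct computation gives $W = -\,r^{N-1}u_0^2\,w'$, so $w'(r) = -W(r)/(r^{N-1}u_0(r)^2) > 0$ and $w$ is increasing. Fix $r_0 \in (0,R)$ and set $c = -W(r_0) > 0$; by monotonicity of $W$ one has $-W(s) \geq c$ for $0 < s \leq r_0$, and integrating $w'$ gives
$$
	w(r_0) - w(\varepsilon) = \int_{\varepsilon}^{r_0} \frac{-W(s)}{s^{N-1}u_0(s)^2}\,ds
	\geq c \int_{\varepsilon}^{r_0}\frac{ds}{s^{N-1}u_0(s)^2}.
$$
Letting $\varepsilon \to 0^+$, the hypothesis $\int_0 dr/(r^{N-1}u_0(r)^2) = \infty$ forces the right-hand side to diverge, so $w(\varepsilon) \to -\infty$. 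This contradicts $w = v/u_0 > 0$ on $(0,R)$, and the contradiction shows that $v$ must vanish somewhere in $(0,R)$.

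I expect the main obstacle to lie in the endpoint analysis rather than the interior comparison. The interior part is a routine Wronskian-monotonicity computation, but one must verify that the boundary term $W(R)$ carries the correct (nonpositive) sign and, more essentially, recognize that the divergence hypothesis $\int_0 dr/(r^{N-1}u_0^2) = \infty$ is precisely the ingredient preventing $w$ from remaining positive near $0$. The small points needing care are the sign $u_0'(R) \leq 0$ and the reduction to $v > 0$; once these are settled, everything else follows from the strict positivity of $\widehat{V} - V$.
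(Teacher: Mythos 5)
Your proof is correct. The paper does not actually reprove this lemma (it is quoted from \cite[Proposition 4.1]{MiNaa}), but your argument --- the Wronskian identity $W'=r^{N-1}(\widehat{V}-V)u_0v$, the sign of $W(R)=R^{N-1}u_0'(R)v(R)$, and the use of $\int_0 dr/(r^{N-1}u_0^2)=\infty$ to drive $v/u_0$ to $-\infty$ at the singular endpoint --- is precisely the standard Sturm-type comparison proof on which that cited result rests, with all the endpoint signs handled correctly.
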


\begin{proof}[Proof of Theorem $\ref{thm1.9}$] 
Take any $\alpha > 0$. 
First we will show that 
\begin{equation}
	u^*(r) > u(r, \alpha) \quad \mbox{for all} \ 0 < r < r_0(\alpha) \leq \infty.
\label{eq6.9}
\end{equation}
Define $w(r) = u^*(r) - u(r, \alpha)$ for $0 < r < r_0(\alpha)$. 
Assume by contradiction that there exists $r_1 \in (0, r_0(\alpha))$ such that 
\begin{equation}
	w(r) > 0 \quad \mbox{for} \ 0 < r < r_1 \quad 
	\mbox{and} \quad w(r_1) = 0.
\label{eq6.10}
\end{equation}
Then $w$ satisfies 
\begin{equation}
	w'' + \frac{N-1}{r}w' + V(r) w = 0
	\quad \mbox{for} \ 0 < r < r_1, 
\label{eq6.11}
\end{equation}
where 
$$
	V(r) = \frac{f(u^*(r))-f(u(r, \alpha))}{u^*(r)-u(r, \alpha)} 
	\quad \mbox{for} \ 0 < r < r_1.
$$
Since $f'(u)$ is strictly increasing for $u > 0$ and 
$u^*(r) > u(r, \alpha)  > 0$ for $0 < r < r_1$, we have 
$$
	V(r) < f'(u^*(r)) \quad \mbox{for} \ 0 < r < r_1.
$$
Since $w(r) < u^*(r)$ for $0 < r < r_1$, from Lemma \ref{lem6.3} we have 
$$
	\int_0 \frac{dr}{r^{N-1}w(r)^2} > \int_0 \frac{dr}{r^{N-1}u^*(r)^2}
	= \infty.
$$
By Lemma \ref{lem6.5}, any solution $\phi$ of (\ref{eq6.5}) has at least one zero in $(0, r_1)$, 
which contradicts Lemma \ref{lem6.4}. 
Thus (\ref{eq6.9}) holds. 

Next we will show that, for any $\alpha > \beta > 0$,  
\begin{equation}
	u(r, \alpha) > u(r, \beta) \quad \mbox{for all} \ 0 \leq r < r_0(\beta) \leq \infty.
\label{eq6.12}
\end{equation}
Define $w(r) = u(r, \alpha) - u(r, \beta)$ for $0 \leq r < r_0(\beta)$. 
Assume by contradiction that there exists $r_1 \in (0, r_0(\beta))$ such that 
(\ref{eq6.10}) holds. 
We see that $w$ satisfies (\ref{eq6.11}) with 
$$
	V(r) = \frac{f(u(r, \alpha))-f(u(r, \beta))}
	{u(r, \alpha)-u(r, \beta)}
	\quad \mbox{for} \ 0 < r < r_1.
$$
Since $u^*(r) > u(r, \alpha)  > u(r, \beta)$ for $0 < r < r_1$, we have 
$$
	V(r) < f'(u(r, \alpha)) < f'(u^*(r)) \quad \mbox{for} \ 0 < r < r_1.
$$
Since $w(0) = \alpha-\beta > 0$, it is clear that $w$ satisfies 
$$
	\int_0 \frac{dr}{r^{N-1}w(r)^2} = \infty.
$$
Lemma \ref{lem6.5} implies that any solution $\phi$ of (\ref{eq6.5}) has at least 
one zero in $(0, r_1)$, which contradicts Lemma \ref{lem6.4}. 
Thus (\ref{eq6.12}) holds, and this implies that $r_0(\beta) = \infty$. 
In fact, if $r_0(\beta) < \infty$, then $u(r, \alpha)-u(r, \beta)$ has 
at least one zero in $(0, r_0(\beta))$ by Lemma \ref{lem2.1}, 
which contradicts (\ref{eq6.12}). 
Thus, for any $\alpha > \beta > 0$, we obtain 
$$
	u(r, \alpha) > u(r, \beta) > 0 \quad \mbox{for all} \ r \geq 0.
$$
By Lemma \ref{lem2.2} (i), $u(r, \beta)$ is stable for any $\beta > 0$, 
and hence (II) holds in Theorem~\ref{thm1.1}.
\end{proof}


\section{Two examples}

In this section we give two examples of $f$ satisfying the hypotheses in 
Corollaries \ref{cor1.4} and \ref{cor1.7}.

\begin{example}\label{exm7.1}
Assume that constants $q_1$ and $q_2$ satisfy 
$1 < q_1 < q_2$ and (\ref{eq1.8}). 
In (\ref{eq1.1}), let 
$$
	f(u) = u^{p_1} + u^{p_2}, \quad \mbox{where} \ 
	p_1 = \frac{q_1}{q_1-1} \quad \mbox{and} \quad p_2 = \frac{q_2}{q_2-1}.
$$
Then $f$ satisfies (\ref{eq1.10}) in Corollary \ref{cor1.4}. 
In fact, by a direct calculation, we have 
\begin{equation}
	\begin{array}{cl}
	& \dsp
	\frac{f'(u)^2}{f(u)f''(u)} 
	\\[2ex]
	= & \dsp
	\frac{p_1^2 u^{2p_1-2} + 2p_1p_2u^{p_1+p_2-2} + p_2^2 u^{2p_2-2}}
	{p_1(p_1-1)u^{2p_1-2} + (p_1(p_1-1)+ p_2(p_2-1))u^{p_1+p_2-2} + p_2(p_2-1)u^{2p_2-2}}
	\\[4ex]
	= & \dsp
	\frac{\left(\frac{p_1^2}{p_2^2}u^{2p_1-2p_2} + \frac{2p_1}{p_2}u^{p_1-p_2} + 1\right) p_2^2 }
	{\left(\frac{p_1(p_1-1)}{p_2(p_2-1)}u^{2p_1-2p_2} + (\frac{p_1(p_1-1)}{p_2(p_2-1)}+ 1)u^{p_1-p_2} 
	+ 1\right)p_2(p_2-1)}.
	\end{array}
	\label{eq7.1}
\end{equation}
Since $p_1 > p_2 > 1$, we have 
$$
	\frac{p_1(p_1-1)}{p_2(p_2-1)} > \frac{p_1^2}{p_2^2} 
$$
and 
$$
	\frac{p_1(p_1-1)}{p_2(p_2-1)}+ 1 = \frac{p_1^2 + p_2^2 -p_1-p_2}{p_2(p_2-1)}
	> \frac{2p_1p_2-2p_1}{p_2(p_2-1)} = \frac{2p_1}{p_2}.
$$
Thus we obtain 
$$
	\frac{\left(\frac{p_1^2}{p_2^2}u^{2p_1-2p_2} + \frac{2p_1}{p_2}u^{p_1-p_2} + 1\right)} 
	{\left(\frac{p_1(p_1-1)}{p_2(p_2-1)}u^{2p_1-2p_2} + (\frac{p_1(p_1-1)}{p_2(p_2-1)}+ 1)u^{p_1-p_2} 
	+ 1\right)} \leq 1 \quad \mbox{for} \ u \geq 0.
$$
Then it follows from (\ref{eq7.1}) that 
$$
	\frac{f'(u)^2}{f(u)f''(u)} \leq \frac{p_2}{p_2-1} = q_2 \quad 
	\mbox{for} \ u > 0.
$$
By a similar argument, we obtain 
$$
	\begin{array}{rcl}
	\dsp
	\frac{f'(u)^2}{f(u)f''(u)} & = & \dsp
	\frac{p_1^2 \left(u^{2p_1-2p_2} + \frac{2p_2}{p_1}u^{p_1-p_2} + \frac{p_2^2}{p_1^2}\right)}
	{p_1(p_1-1)\left(u^{2p_1-2p_2} + (1+ \frac{p_2(p_2-1)}{p_1(p_1-1)})u^{p_1-p_2} + \frac{p_2(p_2-1)}{p_1(p_1-1)}\right)}.
	\\[4ex]
	& \geq & \dsp 
	\frac{p_1}{p_1-1} = q_1 
	\quad \mbox{for} \ u > 0.
	\end{array}
$$
Thus (\ref{eq1.10}) holds.
By Corollary \ref{cor1.4}, we see that (II) holds in Theorem \ref{thm1.1}.
\end{example}

\begin{example}\label{exm7.2}
In (\ref{eq1.1}) let 
$$
	f(u) = \frac{u^{p_1}}{(1+u)^{p_2}} \quad \mbox{with} \ 
	p_1 > p_{JL} \quad \mbox{and} \quad 1 \leq p_1-p_2 < p_{JL}.
$$
We will show that $f'(u) > 0$ and $f''(u) > 0$ for $u > 0$, and  that 
(f1) and (f2) hold with $q_0 < q_{JL} < q_{\infty}$. 
By a direct calculation, we have 
$$
	f'(u) = \frac{p_1u^{p_1-1}+(p_1-p_2)u^{p_1}}{(1+u)^{p_2+1}} 
	\quad \mbox{and} 
$$
$$
	f''(u) = \frac{((p_1-p_2)(p_1-p_2-1)u^2 + 2p_1(p_1-p_2-1)u + p_1(p_1-1))u^{p_1-2}}
	{(1+u)^{p_2+2}}. 
$$
Recall that $p_1 > p_2+1$. Then we obtain $f'(u) > 0$ and $f''(u) > 0$ for $u > 0$. 

It follows that  
$$
	p_0 = \lim_{u \to 0} \frac{uf'(u)}{f(u)} = 
	\lim_{u \to 0}\left(p_1 - p_2\frac{u}{1+u}\right) = 
	p_1 > p_{JL} 
$$
and
$$	
	1 < p_{\infty} = \lim_{u \to \infty} \frac{uf'(u)}{f(u)} = p_1-p_2 < p_{JL}.
$$
By the relations $1/p_0 + 1/q_0 = 1$ and $1/p_{\infty} + 1/q_{\infty} = 1$, 
we obtain $q_0 < q_{JL}$ and $q_{\infty} > q_{JL}$. 
Thus (III) holds by Corollary \ref{cor1.7}.
\end{example}

\bigskip

\noindent
{\bf Acknowledgements} 
The first author was supported by JSPS KAKENHI Grant Number 24K00530 
and the second author was supported by 
JSPS KAKENHI Grant Number 23K03167. 
This work was also supported by Research Institute for Mathematical Sciences, a Joint
Usage/Research Center located in Kyoto University.
\bigskip



\end{document}